\newcommand{\fr}[1]{\mathfrak{#1}}
\newcommand{\ca}[1]{\mathcal{#1}}
\newcommand{\bk}{{{\boldsymbol{k}}}}
\newcommand{\ac}{\operatorname{ac}}
\newcommand{\af}{\mathbf{f}}
\newcommand{\ag}{\mathbf{g}}
\newcommand{\cha}{\operatorname{char}}
\newcommand{\val}{\operatorname{v}}
\newcommand{\res}{\operatorname{r}}
\def \ann{\operatorname{Ann}}
\def \bsigma{{\boldsymbol{\sigma}}}
\def \i{{\boldsymbol{i}}}
\def \j{{\boldsymbol{j}}}
\def \k{{\boldsymbol{k}}}
\def \l{{\boldsymbol{l}}}
\def \f{\operatorname{f}}
\newtheorem{theorem}{Theorem}[section]
\newtheorem{lemma}[theorem]{Lemma}
\newtheorem{corollary}[theorem]{Corollary}
\newtheorem{definition}[theorem]{Definition}
\newtheorem{remark}[theorem]{Remark}
\theoremstyle{definition}
\title{ 
Quantifier elimination for valued fields equipped with an automorphism}
\author{Salih Durhan} 
\address{Mathematics Research and Teaching Group, Middle East Technical University, Northern Cyprus Campus, Kalkanli Guzelyurt, TRNC, Mersin 10, Turkey}
\author{G\"onen\c{c} Onay}
\address{Department of Mathematics, Mimar Sinan Fine Arts University, Bomonti, Sisli 34380, Istanbul,Turkey}
\begin{document}
\bibliographystyle{plain}

\maketitle

\begin{section}{introduction}

A {\em difference field} is a field equipped with a distinguished automorphism. The concept has derived from the study of functional equations like $f(x+1)-f(x)=g(x)$ which are called {\em difference equations}. A {\em valued difference field} is a valued field equipped with a distinguished automorphism which fixes the valuation ring setwise. These structures have attracted attention after their relevance in~\cite{HFrob} as a tool for analyzing difference varieties. 

The results of Ax\&Kochen and Ershov, see \cite{koch}, on valued fields have been very influential on the algebraic theory of valued fields and lead to an asymptotic solution of Artin's conjecture that over the $p$-adics every homogeneous polynomial of degree $d$ with more than $d^2$ variables has a nontrivial zero\footnote{The conjecture is indeed false.}. Hence it is very natural to ask whether one can obtain analogues of these results in the context of valued difference fields. Most notably, in \cite{BMS}, Belair, Macintyre and Scanlon provide axiomatization and relative quantifier elimination for the field of Witt vectors over the algebraic closure of $\mathbb{F}_p$ equipped with the lifting of the Frobenius automorphism. One of the key ingredients of these results is the study of zeroes of difference polynomials in one variable. This is purely algebraic in nature and may be seen as a first step towards the study of difference varieties in terms of valuation theory. The success of \cite{BMS} inspired further research and more results in the same direction were proved in \cite{ad}, \cite{vdca} and \cite{Pal}. It is worth mentioning that these results were used in \cite{HilsCher} to show that valued difference fields are concrete examples of NTP2 theories. The main results of this paper are again axiomatization and relative quantifier elimination for valued difference fields, see Section~\ref{cqe1}, but in a much more general context than the results mentioned above. In Section~\ref{trans} we apply these results to the transseries field considered as a valued difference field with the automorphism that sends each element $f(x)$ to $f(x+1)$. 

Let $K$ be a valued difference field with valuation $v$, valuation ring $\ca{O}$ whose maximal ideal is $\fr{m}$, value group $\Gamma$ and distinguished automorphism $\sigma$. Then $\sigma$ induces an automorphism of the residue field $\bk:=\ca{O}/\fr{m}$, denoted $\bar{\sigma}$. Likewise $\sigma$ induces an order-preserving automorphism of the value group, which we also
denote by $\sigma$:
$$\gamma \mapsto \sigma(\gamma):= v(\sigma(a)) \text{ where } v(a)=\gamma.$$ Up to this point model theoretic results on valued difference fields are obtained in restricted settings determined by the action of $\sigma$ on the value group. For example in \cite{BMS}, $\sigma$ is assumed to induce the identity map on $\Gamma$. In general the action of $\sigma$ on $\Gamma$ can be quite complicated, see Hahn difference fields in Section~\ref{prelim} and also Section~\ref{trans}. Our key improvement in this paper is omitting all assumptions about the action of $\sigma$ on the value group except what is already implied by requiring that it fixes the valuation ring set-wise. This is achieved via generalizing and unifying various techniques from the aforementioned results. The notion of {\em regularity} from the Ph. D. thesis of the second author~\cite{Gonenctez} (also used in \cite{Pal} independently) turns out to be a very efficient tool for understanding the interaction between pseudo-Cauchy sequences and $\sigma$-polynomials, see Section~\ref{pcandsigma}. 

For the main content of this paper we need the results of Section~\ref{pcandsigma} only for $\sigma$-polynomials in one variable but we also present straightforward generalizations to the multivariable setting at the end of Section~\ref{pcandsigma}. These results are very much in the spirit of tropical geometry and we hope that they will find applications in the study of difference varieties. Indeed, we provide a straightforward proof of Kapranov's Theorem as a consequence, see Theorem~\ref{kapranov}. A tropical approach to difference geometry with the tools introduced here also seems within reach but this is a seperate goal to be pursued elaborately in further research.

Henselianity plays a crucial role in~\cite{koch} and all similar results about the elementary theory of valued fields. There are various attempts for a suitable definition of {\em $\sigma$-henselianity} to fullfil that role in valued difference fields. The one we use (see Definition~\ref{shensel2}) was introduced in~\cite{vdca}. It was initially intended to deal with {\em contractive} valued difference fields\footnote{That is; valued difference fields where $\sigma(\gamma)>n\gamma$ for all $n$ whenever $\gamma>0$.} but in Section~\ref{newtonhensel1} we show that this notion $\sigma$-henselianity is suitable for the general setting as well. It is worth noting that $\sigma$-henselianity
implies that the residue difference field is {\em linear difference closed}. 
That is,
if $K$ is a
$\sigma$-henselian valued difference field as in Definition~\ref{shensel2}, then
for all $\alpha_0, \dots, \alpha_n \in \bk$ with 
$\alpha_i \neq 0$ for some $i$, the equation 
$$1+\alpha_0x+\alpha_1\bar{\sigma}(x)+\cdots+\alpha_n\bar{\sigma}^n(x)=0$$
has a solution in $\bk$; see Lemma~\ref{hensax1}. In particular our results in Section~\ref{cqe1} do not apply when
when $\bar{\sigma}$ is the 
identity on the residue field. Let us note that this restriction is present in each of ~\cite{BMS},~\cite{vdca},~\cite{Pal}.

In Section~\ref{trans} we apply our results to transseries. Transseries, also referred to as LE-series, present themselves in real differential algebra as real closed differential fields, see~\cite{vdHLN}. Morevoer there is a lot of additional structure on transseries; exponentiation, valuation, composition, integration. We will be considering a transseries field, $\mathbb T$, as a valued field equipped with a {\em right composition} map. If $g(x)$ is a positive, infinite transseries then the map given by $\f(x) \mapsto f(g(x))$ is an automorphism of $\mathbb T$ and it fixes the valuation ring setwise. As such $\mathbb T$ becomes a valued difference field.  One natural choice for $g(x)$ as above is $g(x)=x+1$. Let us note that in this case the automorphism is neither value preserving, nor contractive, nor multiplicative; the cases considered in ~\cite{BMS}, ~\cite{vdca}, ~\cite{Pal} respectively. 
Unfortunately, the residue field of the natural valuation on $\mathbb T$ is real closed. In particular, the residue difference field is not linear difference closed. However, when $g(x)=x+1$, we can pass to a coarsening of the natural valuation whose residue difference field is linear difference closed and then apply the results of Section~\ref{cqe1}. This trick can not be applied for an arbitrary $g(x)$. In the particularly interesting case $g(x)=e^x$, the linear difference equation $$f(e^x)-f(x)=1$$ does not have a solution in $\mathbb T$ and worse the reduced equation does not have a solution in the residue difference field of any coarsening. 

We would like to thank Lou van den Dries and Joris van der Hoeven for the help they provided in issues concerning transseries. Part of this research was conducted at Nesin Mathematics Village, we are thankful for their hospitality. 
\end{section}

\begin{section}{Preliminaries}\label{prelim}

\noindent
Throughout, $\mathbb{N}=\{0,1,2,\dots\}$, and $m,n$ range over $\mathbb{N}$. 
We let $K^\times=K\setminus \{0\}$ be the multiplicative group of a field $K$. 
Some of the basic concepts we use are introduced below. For the rest (difference fields, $\sigma$-polynomials and their Taylor expansions, valued fields, etc.) we follow the notations and conventions given in the preliminaries section of \cite{ad}.

\bigskip\noindent
{\bf Ordered difference groups.} An {\em ordered difference group} is 
an ordered abelian group equipped with a distinguished order-preserving
automorphism. Note that an order preserving automorphism is a strictly increasing function. We consider an ordered difference group in the obvious 
way as a structure for the language 
$\{ 0, +, -, <, \sigma \}$, 
where the unary function symbol $\sigma$ is interpreted as the distinguished automorphism. Let $\Delta \subseteq \Gamma$ be an extension of 
ordered difference groups, and $\gamma \in \Gamma$.
We define $\Delta \langle \gamma \rangle$ to be the smallest ordered difference subgroup of $\Gamma$ containing $\Delta$ and $\gamma$. 
For $\i=(i_0, \dots, i_n) \in \mathbb{Z}^{n+1}$
we put $$\bsigma^{\i}(\gamma):= \sum_{k=0}^n
i_k \sigma^k(\gamma).$$

\medskip\noindent
Consider the polynomial ring $\mathbb{Z}[\sigma]$ where $\sigma$ is taken as 
an indeterminate.
We construe the ordered difference group $\Gamma$ as a
$\mathbb{Z}[\sigma]$-module as follows: for 
$$\tau=\sum_{k=0}^{n}i_k\sigma^k \in \mathbb{Z}[\sigma], \quad \gamma\in \Gamma,$$
we set $\tau(\gamma):=\bsigma^{\i}(\gamma)$ where $\i=(i_0, \dots, i_n) \in \mathbb{Z}^{n+1}$. We also consider each ordered difference subgroup of 
$\Gamma$ as a $\mathbb{Z}[\sigma]$-submodule of $\Gamma$.
Let $\gamma \in \Gamma \setminus \Delta$. We define the {\em annihilator of 
$\gamma$ modulo $\Delta$} to be
$$\ann_{\Gamma / \Delta}(\gamma):=\{\tau \in
\mathbb{Z}[\sigma]: \tau(\gamma) \in \Delta\},$$ 
which is an ideal of $\mathbb{Z}[\sigma]$. 
%The ideal generated by $\tau_1,\dots,\tau_n \in \mathbb{Z}[\sigma]$
%is denoted $(\tau_1,\dots,\tau_n)$. 
%The maximal ideals of 
%$\mathbb{Z}[\sigma]$ are generated by two elements 
%$p, \tau$ where $p \in \mathbb{Z} \subset \mathbb{Z}[\sigma]$ is a prime 
%number and $\tau \in \mathbb{Z}[\sigma]$ is monic and irreducible. 

{\bf Valued difference fields.} A {\em valued difference field\/} is a valued field $\ca{K}=(K, \Gamma, \bk; v,\pi)$ 
where $K$ is not just a field, but a difference field whose
difference operator $\sigma$ satisfies
$\sigma(\ca{O}_v)= \ca{O}_v$. It follows that
$\sigma$ induces an automorphism of the residue field:
$$ \pi(a) \mapsto \pi(\sigma(a)):\ \bk \to \bk, \quad a \in \ca{O}.$$  
We denote this automorphism by $\bar{\sigma}$, and $\bk$ equipped with
$\bar{\sigma}$ is called the {\em residue difference field of $\ca{K}$}. Likewise, $\sigma$ induces an order preserving automorphism of the value group $\Gamma$; which we also denote by $\sigma$. 
Furthermore
$$v(\bsigma(y)^{\i})=\bsigma^{\i}(\gamma)$$ for all $y\in K^{\times}$ 
with $v(y)=\gamma$ and hence $v:K^{\times} \to \Gamma$ is a 
morphism of $\mathbb{Z}[\sigma]$-modules.

\medskip\noindent
Let $\ca{K}$ be a valued difference field. The difference operator $\sigma$ of $K$ is also referred to as the {\em difference operator of $\ca{K}$}. By an {\em extension\/} of $\ca{K}$ we shall mean a valued difference field $\ca{K}'=(K',\dots)$ that extends $\ca{K}$ as a valued field and whose difference operator extends the difference operator of $\ca{K}'$. In this situation we also say that $\ca{K}$ is a {\em valued difference subfield of\/} $\ca{K}'$, and we indicate this by $\ca{K} \le \ca{K}'$. Such an extension is called {\em immediate\/} 
if it is immediate as an extension of valued fields.
In dealing with a valued difference field $\ca{K}$ as above $v$ also denotes the valuation of any extension of $\ca{K}$ that 
gets mentioned (unless specified otherwise), and any difference 
subfield $E$ of $K$ is construed 
as a valued difference subfield of $\ca{K}$ in the obvious way. Whenever we say $\ca{K}$ is a valued difference field, it should be understood that $\ca{K}=(K, \Gamma, \bk; v, \pi)$; thus fixing the notations for the underlying field, value group, residue field and valuation. Likewise $\ca{K}'$ will always be $(K', \Gamma', \bk'; v',\pi')$. In case $\ca{K} \leq \ca{K}'$ we write $v$, $\sigma$, $\pi$ for $v'$, $\sigma'$ and $\pi'$ respectively.

%The residue field of the valued subfield $\text{Fix}(K)$ of $\ca{K}$ is clearly a subfield of $\text{Fix}(\bk)$. 

Let $\ca{K}^h= (K^h, \Gamma, \bk;\dots)$ be the henselization of the 
underlying valued field of $\ca{K}$. By the universal property of ``henselization''
the operator $\sigma$ extends uniquely to an automorphism $\sigma^h$ of the field $K^h$ such that $\ca{K}^h$ with $\sigma^h$ is a valued difference field. Accordingly we shall view $\ca{K}^h$ as a valued difference field, making it thereby an immediate extension of the valued difference field $\ca{K}$.

Given an extension  $\ca{K} \leq \ca{K}'$ of valued difference fields and 
$a \in K'$, we define $\ca{K} \langle a \rangle$ to be the 
smallest valued difference subfield of $\ca{K}'$ extending $\ca{K}$ and 
containing $a$ in its underlying difference field; thus
the underlying difference field of $\ca{K} \langle a \rangle$ 
is $K \langle a \rangle$.

\bigskip\noindent
{\bf Hahn difference fields.} 
Let $\bk$ be a field and $\Gamma$ an ordered abelian group. This gives the Hahn field
$\bk((t^{\Gamma}))$ whose elements are the formal sums
$a=\sum_{\gamma \in \Gamma} a_\gamma t^{\gamma}$  
with $a_\gamma \in \bk$ for all $\gamma$, with well-ordered {\em support\/}
$\{\gamma:\ a_\gamma \neq 0\} \subseteq \Gamma$. With $a$ as above, we define the 
valuation $v: \bk((t^{\Gamma}))^{\times} \to \Gamma$ by
$v(a):=\min \{\gamma: a_\gamma \neq 0\}$,
and the surjective ring morphism  
$\pi:\ \ca{O}_v \to \bk$ by $\pi(a):=a_0$.
In this way we obtain the (maximal) valued field
$\ca{K}=(\bk((t^\Gamma)), \Gamma, \bk;v,\pi)$ to which we also just refer to
as the {\em Hahn field} $\bk((t^\Gamma))$.

\medskip\noindent
Suppose that the field $\bk$ is equipped with an
automorphism $\bar{\sigma}$ and that the ordered group $\Gamma$ is equipped with an 
order-preserving automorphism $\sigma$. Then
$$\sum_{\gamma} a_\gamma t^{\gamma} \mapsto 
\sum_{\gamma} \bar{\sigma}(a_\gamma) t^{\sigma(\gamma)}$$
is an automorphism, to be denoted by $\sigma$, 
of the field $\bk((t^\Gamma))$, with $\sigma(\ca{O}_v)=\ca{O}_v$.
We consider the three-sorted structure 
$(\bk((t^\Gamma)), \Gamma, \bk;\ v,\pi)$, with the field $\bk((t^\Gamma))$
equipped with the
automorphism $\sigma$ as above, as a valued difference field,
and also refer to it as the {\em Hahn difference field} 
$\bk((t^\Gamma))$.

\bigskip\noindent
{\bf Pseudo-Cauchy Sequences.} 
A {\em well-indexed sequence}
is a sequence $\{a_\rho\}$ indexed by the elements $\rho$ of some
nonempty well-ordered set without largest element; and throughout 
``eventually'' means ``for all sufficiently large $\rho$'' in the context of a well-indexed sequence.
Let $\ca{K}$ be a valued field and $\{a_\rho\}$ a well-indexed sequence from $K$.
\begin{definition}\label{pcs} The sequence $\{a_\rho\}$ is a pseudo-Cauchy sequence (pc-sequence)
if for some index
$\rho_0 $, $$\rho'' > \rho' > \rho \ge \rho_0\ \Longrightarrow\ 
v(a_{\rho''}-a_{\rho'}) > v(a_{\rho'}-a_\rho).$$
For $a$ in some valued field extension of $\ca{K}$, we say that $a$ is a pseudo-limit of $\{a_\rho\}$ if 
$\{v(a-a_\rho)\}$ is eventually increasing, denoted $a_\rho \leadsto a$. 
\end{definition}

For $\rho_0$ 
as above, and put 
$$\gamma_\rho :=v(a_{\rho'}-a_\rho)$$ for
$\rho'>\rho \ge \rho_0$; this depends only on $\rho$. 
Then $\{\gamma_\rho\}_{\rho \ge \rho_0}$ is strictly increasing.
The {\em width} of $\{a_\rho\}$ is the set
$$\{\gamma \in
\Gamma\cup\{\infty\}:\gamma>\gamma_\rho\ \mbox{for all}\ \rho \ge \rho_0\}.$$ 
Its significance is that if $a,b\in K$ and $a_\rho \leadsto a$, then 
$a_\rho \leadsto b$ if and only if $v(a-b)$ is in the width of $\{a_\rho\}$.
A useful observation about pc-sequences is that if $\{a_\rho\}$ is a pc-sequence 
in an expansion of a valued field (for example, in a valued difference field), 
then $\{a_\rho\}$ has a pseudolimit in an elementary extension of that
expansion.

\end{section}

\begin{section}{Regularity and Pseudoconvergence}\label{pcandsigma}
Let $\ca{K}$ be a valued difference field. For a $\sigma$-polynomial
$F(x)=\sum_{\i}a_{\i}\bsigma(x)^{\i}$
over $K$ and $\gamma \in \Gamma$ define 
$F_v(\gamma) \colon=\min_{\i}\{v(a_{\i})+\bsigma^{\i}(\gamma)\}$.
Thus $F$ induces a map
\begin{align*}
F_v\colon &\Gamma \to \Gamma \\
       & \gamma \mapsto F_v(\gamma)
\end{align*}
which is strictly increasing whenever $F(x)$ is nonzero and the constant term of $F$ is equal to zero (since $\sigma$ is an order preserving automorphism of $\Gamma$). In general, $F_v$ is strictly increasing on an initial segment of $\Gamma$ and constant on the complement of this initial segment. 
Note that if $F$ is a $\sigma$-monomial then for all $a \in K$ we have $vF(a)=F_v(\gamma)$, where $v(a)=\gamma$. This obviously does not hold for $\sigma$-polynomials.

\begin{definition}
Let $F(x)$ be a $\sigma$-polynomial over $K$. An element $a \in K$ is regular for $F$ if   $$v(F(a))=F_v(\gamma),$$ 
where $\gamma=v(a)$, otherwise $a$ is irregular for $F$. We say that  $a \in K$ is regular over a subfield $E$ of $K$ if $a$ is regular for all $\sigma$-polynomials with coefficients from $E$.
\end{definition}

\noindent Every element $a \in K$ is regular for all  $\sigma$-monomials over $K$ and $0$  is regular for  $F(x)$  if and only if $F(0)=0$.
Note that $a$ is irregular for $F(x)$ if and only if $vF(a)>F_v(\gamma)$. 

%\begin{lemma}\label{reg1} If $a$ is regular and $b$ is irregular for $F(x)$ and $vF(a) \geq vF(b)$ then $v(a)>v(b)$.
%\end{lemma}
%\begin{proof} Let $v(a)=\gamma_1$ and $v(b)=\gamma_2$. Then $vF(a)=F_v(\gamma_1)$ and $vF(b)>F_v(\gamma_2)$. Since 
%$F_v$ is order preserving on an initial segment of $\Gamma$ and constant on the complement, we obtain $\gamma_1>\gamma_2$.
%\end{proof}

\begin{lemma}\label{reg2} Let $a, b \in K$ be such that $v(b-a)>v(a)$ and $F(x)$ a $\sigma$-polynomial over $K$. Then $a$ is regular for $F(x)$ if and only if $b$ is regular for $F(x)$. 
\end{lemma}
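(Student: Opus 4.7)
The plan is to reduce the statement to showing that $v(F(b)-F(a)) > F_v(\gamma)$, where $\gamma := v(a)$. Since $v(b-a) > v(a)$ the ultrametric inequality gives $v(b) = v(a) = \gamma$, so both sides of the claimed equivalence involve the same target value $F_v(\gamma)$. Once the inequality $v(F(b)-F(a)) > F_v(\gamma)$ is established, the equivalence is immediate: if $v(F(a)) = F_v(\gamma)$ then $v(F(b)) = v\bigl(F(a) + (F(b)-F(a))\bigr) = F_v(\gamma)$ by ultrametric (strict) dominance, and symmetrically for the converse (note that by symmetry $v(a-b) > v(b)$ holds as well).

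The key computation is to expand $F(b) - F(a)$ monomial by monomial. Write $c := b - a$, so $v(c) > \gamma$, and for each multi-index $\i = (i_0,\dots,i_n) \in \mathbb{N}^{n+1}$ appearing in $F(x)=\sum_{\i}a_{\i}\bsigma(x)^{\i}$, expand
$$\bsigma(b)^{\i}=\prod_{k=0}^{n}\bigl(\sigma^k(a)+\sigma^k(c)\bigr)^{i_k}.$$
The leading term of the product is $\bsigma(a)^{\i}$; every other summand contains at least one factor $\sigma^k(c)$. Since $\sigma$ is order-preserving on $\Gamma$, we have $v(\sigma^k(c)) = \sigma^k(v(c)) > \sigma^k(\gamma) = v(\sigma^k(a))$ for every $k$, so each remaining summand has valuation strictly greater than $\bsigma^{\i}(\gamma)$. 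Therefore $\bsigma(b)^{\i} = \bsigma(a)^{\i} + d_{\i}$ with $v(d_{\i}) > \bsigma^{\i}(\gamma)$, and consequently
$$v\bigl(a_{\i}d_{\i}\bigr) > v(a_{\i}) + \bsigma^{\i}(\gamma) \ge F_v(\gamma).$$

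Summing over $\i$, we obtain $F(b)-F(a) = \sum_{\i} a_{\i} d_{\i}$, and the ultrametric inequality yields $v(F(b)-F(a)) > F_v(\gamma)$, which is precisely what we needed. The only subtlety, and the single place where a hypothesis about $\sigma$ is actually used, is the step $v(\sigma^k(c)) > v(\sigma^k(a))$, which relies on $\sigma$ being order-preserving on $\Gamma$; everything else is elementary manipulation of ultrametric valuations and multinomial expansion, so I do not anticipate any serious obstacle.
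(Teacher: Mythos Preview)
Your proof is correct, and it takes a genuinely different route from the paper's. The paper normalizes by writing $G(x)=F(ax)/a_{\j}\bsigma(a)^{\j}$ for a suitable $\j$, so that $G$ has integral coefficients with nonzero reduction $\overline{G}$; regularity of $a$ then becomes the residue-field statement $\overline{G}(1)\neq 0$. Writing $b=ac$ with $\bar{c}=1$, one gets $\overline{G}(\bar{c})=\overline{G}(1)\neq 0$, hence $vG(c)=0$ and $b$ is regular. Your argument instead bypasses the residue field entirely and establishes the symmetric inequality $v(F(b)-F(a))>F_v(\gamma)$ by a direct multinomial expansion and the order-preserving property of $\sigma$; the equivalence is then immediate from ultrametric dominance. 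Your approach is more elementary and handles both directions at once, while the paper's reduction-to-residue-field viewpoint is the same device reused in the next lemma (Lemma~\ref{regforpc}) to manufacture regular elements, so it has the advantage of setting up machinery used later.
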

\begin{proof} Suppose that $a$ is regular for $F(x)=\sum_\i a_\i \bsigma(x)^\i$ and $v(a)=\gamma$. Pick $\j$ with 
$v(a_\j)+\bsigma^\j(\gamma)=F_v(\gamma)=vF(a)$ and let 
$$G(x)=\frac{F(ax)}{a_\j\bsigma(a)^\j}=\sum_\i \frac{a_\i \bsigma(a)^\i }{a_\j\bsigma(a)^\j}\bsigma(x)^\i.$$ Then $G(x)$ has coefficients from the valuation ring and since $a$ is regular for $F(x)$, $vG(1)=0$. Therefore we obtain $\overline{G}(1) \neq 0 \in \bk$ where $\overline{G}(x)$ is the reduced $\bar{\sigma}$-polynomial over $\bk$.  Since $v(b-a)>v(a)$, there is $c$ in the valuation ring with $b=ac$ and $\bar{c}=1$. Now, $\overline{G}(\bar{c}) \neq 0$ and hence $vG(c)=0$. This leads to
$$vF(b)=v(a_\j)+\bsigma^\j(\gamma)=F_v(\gamma)$$
and so $b$ is also regular for $F(x)$. 
\end{proof}

If $\{a_\rho\}$ is a pc-sequence in a valued field $K$ and
$a_\rho \leadsto a$ with $a\in K$, then for an ordinary nonconstant 
polynomial $f(x) \in K[x]$ we have $f(a_\rho) \leadsto f(a)$, see \cite{kaplansky}.
For certain valued difference fields, as in ~\cite{vdca}, the same is true for nonconstant $\sigma$-polynomials but that is not the case in general.  
In the context of a value-preserving automorphism, this issue was resolved in 
\cite{BMS} via the notion of {\em equivalent pc-sequences\/} under the assumption that the automorphism is not of finite order over the residue field. We will also need this assumption, which we explicitly state as an axiom. 

\vglue.3cm
\noindent
{\bf Axiom 1.}\ For each integer $d>0$ there is
$y\in \bk$ with $\bar{\sigma}^d(y)\ne y$. 

\medskip\noindent Also, the focus of the current paper is valued difference fields with residue characteristic zero, and so we assume this throughout the rest even though some results are valid without this assumption.

\begin{definition}\label{equiv.pc} Two pc-sequences
$\{a_\rho\},\{b_\rho\}$
in a valued field are equivalent if for all $a$ in all 
valued field extensions,
$a_\rho \leadsto a
\Leftrightarrow b_\rho \leadsto a.$
\end{definition}

\noindent
This is an equivalence relation on the set of pc-sequences with 
given index set and
in a given valued field, and: 

\begin{lemma}
 Two pc-sequences $\{a_\rho\}$ and $\{b_\rho\}$ in a valued field 
are equivalent if and only if they
 have the same width and a
 common pseudolimit in some valued field extension.
\end {lemma}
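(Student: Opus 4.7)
The plan is to prove both directions by exploiting a single technical observation: if $a_\rho \leadsto a$ in some extension $\ca{L}$ of $\ca{K}$, then for any $c$ in a further extension one has $a_\rho \leadsto c$ if and only if $v(c-a)$ lies in the width of $\{a_\rho\}$. This is already asserted in the text for $a,b\in K$, but the argument extends verbatim: eventually $v(a-a_\rho)=\gamma_\rho$ (this follows from pseudoconvergence via the ultrametric identity applied to $(a-a_\rho)=(a-a_{\rho'})+(a_{\rho'}-a_\rho)$ with $\rho'>\rho$ large), so the ultrametric inequality computes $v(c-a_\rho)$ either as the strictly increasing tail $\gamma_\rho$ (when $v(c-a)>\gamma_\rho$ eventually, i.e.\ $v(c-a)$ is in the width) or as the constant value $v(c-a)$ (otherwise).

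For the forward direction, assume $\{a_\rho\}$ and $\{b_\rho\}$ are equivalent. By the remark preceding the lemma, $\{a_\rho\}$ acquires a pseudolimit $a$ in some elementary extension $\ca{L}$, and equivalence immediately forces $b_\rho \leadsto a$ there, giving the common pseudolimit. To compare widths, fix any $\gamma \in \Gamma$ and, after passing to a further extension of $\ca{L}$ if necessary, pick $c$ with $v(c-a)=\gamma$ (for instance, inside a Hahn-type or transcendental extension). The key observation then gives $a_\rho\leadsto c$ iff $\gamma$ lies in the width of $\{a_\rho\}$, and likewise for $\{b_\rho\}$; equivalence shows these conditions coincide, and since $\infty$ belongs to both widths automatically, the widths agree.

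For the backward direction, suppose $a_\rho,b_\rho\leadsto a$ in some extension $\ca{L}$, with equal widths $W$. Given any $c$ in any further valued field extension $\ca{L}'$ of $\ca{K}$, I must show that $a_\rho\leadsto c$ iff $b_\rho\leadsto c$. Amalgamate $\ca{L}$ and $\ca{L}'$ into a common valued field extension $\ca{L}''$ of $\ca{K}$, so that both $a$ and $c$ live together and $\gamma:=v(c-a)$ makes sense. Applying the key observation inside $\ca{L}''$ twice yields $a_\rho\leadsto c$ iff $\gamma \in W$ iff $b_\rho\leadsto c$, which is exactly what is required.

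The only step with any real content is the amalgamation of two valued field extensions of $\ca{K}$ used in the backward direction; I expect this to be the main (and rather mild) obstacle. It can be bypassed by working throughout inside a single sufficiently saturated elementary extension of $\ca{K}$ containing isomorphic copies of every bounded-cardinality valued field extension, so that both $\ca{L}$ and $\ca{L}'$ may be assumed to sit inside this fixed ambient structure from the outset.
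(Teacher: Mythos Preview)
The paper states this lemma without proof (it is folklore, essentially from Kaplansky), so there is nothing to compare against; your argument is the standard one and is correct.

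One caveat on your final paragraph: the proposed bypass of the amalgamation step---embedding everything into a saturated \emph{elementary} extension of $\ca{K}$---does not work as stated. An elementary extension of $\ca{K}$ need not contain copies of arbitrary valued field extensions of $\ca{K}$ (if $\ca{K}$ is formally real, no elementary extension contains $\sqrt{-1}$, yet the definition of ``equivalent'' quantifies over all valued field extensions). The fix is to use a saturated model of ACVF containing $\ca{K}$ as the ambient structure; equivalently, amalgamation of valued fields over a common valued subfield follows from quantifier elimination in ACVF, so your primary approach is sound. Alternatively, amalgamation can be avoided entirely: equal widths force the step-sequences $\{\gamma_\rho\}$ of $\{a_\rho\}$ and $\{b_\rho\}$ to be mutually cofinal in $\Gamma$, and then, writing $c-b_\tau=(c-a_\rho)+(a_\rho-b_\tau)$ with $\rho$ chosen so that $v(c-a_\rho)>v(a_\rho-b_\tau)$, one computes $v(c-b_\tau)$ directly without ever placing $a$ and $c$ in the same field. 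A related minor point: in your backward direction, $v(c-a)$ may lie in the value group of $\ca{L}''$ rather than in $\Gamma$, so ``$\gamma\in W$'' should be read as ``$\gamma>\gamma_\rho$ eventually''; this is harmless, since the $\gamma_\rho$ all lie in $\Gamma$ and equal widths over $\Gamma$ determine the same upward-closed cone in any ordered extension.
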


We shall prove that given a pc-sequence $\{a_\rho\}$ from a valued difference field $\ca{K}$ which satisfies Axiom $1$, and a $\sigma$-polynomial $F(x)$ over $\ca{K}$ there is an equivalent pc-sequence $\{b_\rho\}$ such that $\{F(b_\rho)\}$ is also a pc-sequence. Construction of $b_\rho$, even when assuming specific behaviour of $\sigma$ as in \cite{BMS}, is quite complicated. Appropriately using regular elements, this can be achieved in straightforward manner. First we state a well-known fact which will allow us to use Axiom 1 effectively. 

\begin{lemma}\label{zariskitop} Let $k \subseteq k'$ be a field extension, and
$g(x_0,\dots,x_n)$ a nonzero polynomial over $k'$. Then there is
a nonzero polynomial $f(x_0,\dots,x_n)$ over $k$ such that whenever
$y_0,\dots,y_n\in k$ and
$f(y_0,\dots,y_n)\ne 0$, then 
$g(y_0,\dots,y_n) \ne 0$.
\end{lemma}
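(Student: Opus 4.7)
The plan is to exploit the $k$-linear independence of a suitable basis of $k'$ over $k$ to split $g$ into its ``components'' along that basis, and take $f$ to be any nonzero component.

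More precisely, first I would fix a $k$-basis $\{e_i\}_{i\in I}$ of $k'$ (or, more economically, a $k$-basis of the finite-dimensional $k$-subspace of $k'$ spanned by the coefficients of $g$, so $I$ is finite). Writing each coefficient of $g$ uniquely as a $k$-linear combination of the $e_i$, I can regroup terms and obtain a finite decomposition
\[
g(x_0,\dots,x_n)\;=\;\sum_{i\in I} e_i\, f_i(x_0,\dots,x_n),
\]
where each $f_i$ is a polynomial over $k$. Since $g\neq 0$ and the $e_i$ are $k$-linearly independent in $k'$, at least one of the $f_i$ must be nonzero as a polynomial; I set $f:=f_i$ for some such $i$.

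To verify that $f$ works, take any $y_0,\dots,y_n\in k$ with $f(y_0,\dots,y_n)\neq 0$. Since each $f_j$ has coefficients in $k$, the values $f_j(y_0,\dots,y_n)$ all lie in $k$, and
\[
g(y_0,\dots,y_n)\;=\;\sum_{j\in I} e_j\, f_j(y_0,\dots,y_n)
\]
is a $k$-linear combination of the linearly independent family $\{e_j\}_{j\in I}$. Because the coefficient $f_i(y_0,\dots,y_n)\in k$ is nonzero by assumption, this linear combination is nonzero in $k'$, so $g(y_0,\dots,y_n)\neq 0$, as required.

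There is really no serious obstacle here: the only technical point is to make sure one distinguishes coefficient-wise $k$-linear independence (used to extract a nonzero $f_i$ as a polynomial) from pointwise $k$-linear independence at a specific $y$ (used to conclude $g(y)\neq 0$). Both uses follow from the same fact that $\{e_i\}$ is a $k$-basis, so once the decomposition $g=\sum e_i f_i$ is written down, the rest is immediate.
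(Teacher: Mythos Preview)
Your argument is correct. The paper does not actually prove this lemma; it introduces it as ``a well-known fact'' and states it without proof, so there is no approach in the paper to compare against. Your basis-decomposition argument is the standard one: writing $g=\sum_i e_i f_i$ with $f_i\in k[x_0,\dots,x_n]$ via a $k$-basis $\{e_i\}$ of $k'$ (or of the $k$-span of the coefficients of $g$), picking a nonzero $f_i$ as $f$, and then using $k$-linear independence of the $e_i$ pointwise at any $y\in k^{n+1}$ is exactly right, and your remark distinguishing the two uses of linear independence is on point.
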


The existence of regular elements as described below will be the key tool for constructing an equivalent pc-sequence as in the above discussion. 

\begin{lemma}\label{regforpc}
Suppose that $\ca{K} \leq \ca{K}'$ is an extension of valued fields, and $\ca{K}$ satisfies Axiom $1$.  Let $F$ be a $\sigma$-polynomial over $K'$, and $\gamma \in \Gamma$. Then there exists $a \in K$ such that $v(a)=\gamma$ and $a$ is regular for $F$.
\end{lemma}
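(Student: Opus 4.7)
The proof factors through a rescaling inside $K$. Fix any $a_0 \in K$ with $v(a_0) = \gamma$; I look for a unit $u \in \ca{O}_v \cap K$ making $a_0 u$ regular for $F$. Writing $F(x) = \sum_\i a_\i \bsigma(x)^\i$, setting $\delta = F_v(\gamma)$, and choosing $\j$ achieving the minimum in $F_v(\gamma)$, let $d = a_\j \bsigma(a_0)^\j$ (so $v(d) = \delta$) and define
$$G(x) \;:=\; d^{-1} F(a_0 x) \;=\; \sum_\i c_\i \bsigma(x)^\i, \qquad c_\i := a_\i \bsigma(a_0)^\i / d.$$
By construction each $c_\i$ lies in the valuation ring and $v(c_\j) = 0$, so the reduction $\bar{G}$ is a nonzero $\bar{\sigma}$-polynomial over $\bk'$. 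For a unit $u \in K$ with $v(u)=0$, regularity of $a_0 u$ for $F$ is equivalent to $v(G(u)) = 0$, which is equivalent to $\bar{G}(\bar{u}) \neq 0$ in $\bk'$. The task reduces to producing $\bar{u} \in \bk^\times$ with $\bar{G}(\bar{u}) \neq 0$.

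Next I transport the problem from $\bk'$ down to $\bk$. Viewing $\bar{G}$ as an ordinary polynomial $g(x_0, \dots, x_n) \in \bk'[x_0, \dots, x_n]$ with $x_i$ standing for $\bar{\sigma}^i(x)$, where $n$ is the order of $\bar{G}$, we have $g \neq 0$. Applying Lemma~\ref{zariskitop} to the extension $\bk \subseteq \bk'$ yields a nonzero $f \in \bk[x_0, \dots, x_n]$ such that $f(\bar{y}_0, \dots, \bar{y}_n) \neq 0$ forces $g(\bar{y}_0, \dots, \bar{y}_n) \neq 0$ for tuples in $\bk$. Thus it suffices to exhibit $\bar{u} \in \bk^\times$ with $f(\bar{u}, \bar{\sigma}(\bar{u}), \dots, \bar{\sigma}^n(\bar{u})) \neq 0$; this is now a statement entirely about the residue difference field $(\bk, \bar{\sigma})$.

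This remaining existence claim I would prove by induction on $n$. The base case $n=0$ is immediate, since residue characteristic zero forces $\bk$ infinite and a nonzero univariate polynomial has a non-root. The inductive step is the main obstacle, and this is exactly where Axiom~$1$ enters: after scaling $\bar{u}$ by elements of the infinite fixed field $\bk^{\bar{\sigma}}$ to isolate a nonzero homogeneous component of $f$, one writes the evaluations as $\sum_{\boldsymbol{k}} a_{\boldsymbol{k}} \chi_{\boldsymbol{k}}(\bar{u})$ where $\chi_{\boldsymbol{k}}(\bar{u}) = \prod_i \bar{\sigma}^i(\bar{u})^{k_i}$ are multiplicative characters $\bk^\times \to \bk^\times$. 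Axiom~$1$ rules out any nontrivial relation $\chi_{\boldsymbol{l}} \equiv 1$ on $\bk^\times$ (such a relation would force some power of $\bar{\sigma}$ to be trivial on $\bk$, via a primitive-element descent in characteristic zero), so the characters occurring are pairwise distinct, and Artin's theorem on linear independence of characters then prevents identical vanishing.
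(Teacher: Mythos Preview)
Your rescaling by a fixed $a_0\in K$ of valuation $\gamma$ and the passage to the reduced $\bar\sigma$-polynomial $\bar G$ over $\bk'$, followed by the use of Lemma~\ref{zariskitop} to descend to a nonzero $\bar\sigma$-polynomial over $\bk$, are exactly the paper's proof. The only point of divergence is the final step: the paper simply cites Cohn for the fact that, under Axiom~1, a nonzero $\bar\sigma$-polynomial over $\bk$ has a non-root in $\bk$, whereas you attempt to prove this directly.

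Your outline for that step is sound in spirit but the key move is underjustified. Reducing to a homogeneous component via scaling by the (infinite, since $\mathbb Q\subseteq\bk^{\bar\sigma}$) fixed field, and then writing the evaluation as a $\bk$-linear combination of the monomial characters $\chi_{\k}(\bar u)=\prod_i\bar\sigma^i(\bar u)^{k_i}$, is correct; and Artin's theorem indeed finishes the job \emph{provided} the $\chi_{\k}$ occurring are pairwise distinct. But your justification for distinctness—``a primitive-element descent in characteristic zero''—is opaque, and this is not a general fact about characters (distinct characters on a group can satisfy nontrivial monomial relations). What one actually needs is: if $\chi_{\boldsymbol m}\equiv 1$ on $\bk^\times$ with $\boldsymbol m\neq 0$, then clearing denominators gives a polynomial identity $\prod_{m_i>0}\bar\sigma^i(\bar u)^{m_i}=\prod_{m_i<0}\bar\sigma^i(\bar u)^{-m_i}$ valid for all $\bar u\in\bk$; substituting $\bar u\mapsto 1+t\bar u$ and comparing the coefficient of $t$ yields the \emph{additive} relation $\sum_i m_i\,\bar\sigma^i(\bar u)=0$ for all $\bar u$, and now Dedekind's lemma forces $\bar\sigma^i=\bar\sigma^j$ for some $i\neq j$, contradicting Axiom~1. (Note also that the ``induction on $n$'' you announce plays no role in the argument you actually sketch.) With this gap filled your proof is complete and agrees with the paper's, supplying in addition a self-contained replacement for the Cohn reference.
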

\begin{proof}
Let $F=\sum a_\i \sigma^\i(x)$ and take $b \in K$ with $v(b)=\gamma$. Set 
$$G(x):=\frac{F(bx)}{a_\j \bsigma(b)^\j}=\sum_{\i} \frac{a_\i\bsigma(b)^\i}{a_\j\bsigma(b)^\j}\bsigma(x)^{\i}$$
where $\j$ is such that $F_v(\gamma)=v(a_\j)+\bsigma^\j(\gamma)$. 
So the coefficients of $G(X)$ are in the valuation ring of $K'$, with one coefficient equal to $1$. So the the reduced $\bar{\sigma}$-polynomial $\overline{G}(x)$ over $\bk'$ is nonzero. 

By Lemma~\ref{zariskitop}, there is a nonzero $\bar{\sigma}$-polynomial $h(x)$ over $\bk$ such that for all $\alpha \in \bk$, $\overline{G}(\alpha) \neq 0$ whenever $h(\alpha)\neq 0$. Since $\bar{\sigma}$ is not of finite order as an automorphism of $\bk$, there exists a nonzero element $\alpha \in \bk$ such that 
 $h(\alpha) \neq 0$ (see~\cite{cohn}, p. 201) and hence $\overline{G}(\alpha) \neq 0$. So we can take $c \in K$ with $v(c)=0$ and $\bar{c}=\alpha$. Then 
$v(G(c))= 0$, $v(bc)=v(b)=\gamma$ and 
$$v(F(bc))=v(a_{\j}\sigma(b)^{\j})=F_v(\gamma).$$ 
That is; $a=bc \in K$ is  regular for $F$.
\end{proof}

\begin{remark}\label{regremark} It is clear from the proof of the above lemma that, under the same hypothesis, given a finite set $\Sigma$ of $\sigma$-polynomials over $\ca{K}'$ and $\gamma \in \Gamma$ we can find $a\in K$ of valuation $\gamma$ which is regular for every $\sigma$-polynomial in $\Sigma$.
\end{remark}

\begin{theorem}\label{adjustment1}
Suppose $\ca{K}$ satisfies Axiom $1$,
$\{a_\rho\}$ is a pc-sequence from $K$ and $a_\rho \leadsto a$ in an
extension.  Let $\Sigma$ be a finite set of $\sigma$-polynomials $F(x)$
over $K$. Then there is a pc-sequence $\{b_\rho\}$ from $K$,
equivalent to $\{a_\rho\},$ such that $F(b_\rho)
\leadsto F(a)$ for each nonconstant $F$ in $\Sigma.$
\end{theorem}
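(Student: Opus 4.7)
The plan is to construct $b_\rho = a_\rho + c_\rho$ for a carefully chosen $c_\rho \in K$ with $v(c_\rho)=\gamma_\rho$. Restrict to a cofinal part of the index set on which $\gamma_\rho := v(a_{\rho'}-a_\rho)= v(a-a_\rho)$ is stable for every $\rho'>\rho$ in this part. Fix $t_\rho \in K^\times$ with $v(t_\rho)=\gamma_\rho$ and let $\bar\alpha_\rho := \overline{(a_{\rho'}-a_\rho)/t_\rho} \in \bk$ for any large enough $\rho'>\rho$; this residue is independent of such $\rho'$ and also equals $\overline{(a-a_\rho)/t_\rho}$. For each nonconstant $F\in\Sigma$, Taylor-expand at $a_\rho$ to form
$$P_\rho^F(y) := F(a_\rho+y) - F(a_\rho) = \sum_{|\i|\ge 1} F_{(\i)}(a_\rho)\,\bsigma(y)^{\i},$$
a $\sigma$-polynomial over $K$ with no constant term, so $(P_\rho^F)_v$ is strictly increasing on $\Gamma$. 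Pick $s_\rho^F\in K^\times$ with $v(s_\rho^F)=(P_\rho^F)_v(\gamma_\rho)$ and let $\overline{P_\rho^F}(x):=\overline{P_\rho^F(t_\rho x)/s_\rho^F}$ be the corresponding reduced $\bar\sigma$-polynomial over $\bk$; it is nonzero and, having no constant term, non-constant.

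The heart of the construction is to pick $\bar u_\rho := \overline{c_\rho/t_\rho} \in \bk$ so that simultaneously (i) $\bar u_\rho \ne \bar\alpha_\rho$, (ii) $\overline{P_\rho^F}(\bar u_\rho)\ne 0$ for every nonconstant $F\in\Sigma$, and (iii) $\overline{P_\rho^F}(\bar u_\rho) \ne \overline{P_\rho^F}(\bar\alpha_\rho)$ for every such $F$. Each clause forbids $\bar u_\rho$ from being a root of a specific nonzero $\bar\sigma$-polynomial over $\bk$; in clause (iii) that polynomial is $\overline{P_\rho^F}(x)-\overline{P_\rho^F}(\bar\alpha_\rho)$, which is nonzero because $\overline{P_\rho^F}$ is non-constant. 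Taking the product of these finitely many nonzero $\bar\sigma$-polynomials and applying Lemma~\ref{zariskitop} together with Axiom~1 exactly as in the proof of Lemma~\ref{regforpc} produces a valid $\bar u_\rho\in\bk$. Lift $\bar u_\rho$ to $u_\rho\in\ca{O}$ and set $c_\rho:=t_\rho u_\rho$, $b_\rho:=a_\rho+c_\rho$.

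Clause (i) yields the equivalence: for $\rho'>\rho$ both large, the elements $b_{\rho'}-b_\rho=(a_{\rho'}-a_\rho)+(c_{\rho'}-c_\rho)$ and $b_\rho - a = c_\rho - (a-a_\rho)$ have $t_\rho$-residue $\bar u_\rho-\bar\alpha_\rho\ne 0$ (up to sign), so $v(b_{\rho'}-b_\rho)=v(b_\rho-a)=\gamma_\rho$. Hence $\{b_\rho\}$ is a pc-sequence with the same width as $\{a_\rho\}$ and pseudolimit $a$, so the two are equivalent. Clauses (ii) and (iii) then yield pseudoconvergence of $F(b_\rho)$ to $F(a)$: writing $F(b_\rho)-F(a) = P_\rho^F(c_\rho)-P_\rho^F(a-a_\rho)$ and dividing by $s_\rho^F$, the reduction is $\overline{P_\rho^F}(\bar u_\rho)-\overline{P_\rho^F}(\bar\alpha_\rho)\ne 0$, so $v(F(b_\rho)-F(a))=(P_\rho^F)_v(\gamma_\rho)$. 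For each fixed $\i$ with $|\i|\ge 1$, the sequence $v(F_{(\i)}(a_\rho))$ is eventually constant in $\rho$ (or tends to $\infty$), while $\bsigma^{\i}(\gamma_\rho)$ is strictly increasing; since only finitely many $\i$ contribute to the minimum, $(P_\rho^F)_v(\gamma_\rho)$ is eventually strictly increasing, which finishes the proof.

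The main obstacle is packaging the three competing conditions on $\bar u_\rho$ into a single nonvanishing constraint for one nonzero $\bar\sigma$-polynomial, so that the Cohn-style use of Axiom~1 in Lemma~\ref{regforpc} applies uniformly to all the clauses at once. Once this packaging is in place the rest is straightforward valuation bookkeeping, driven entirely by the regularity of $c_\rho$ and the strict monotonicity of $\bsigma^{\i}(\gamma_\rho)$ for $|\i|\ge 1$.
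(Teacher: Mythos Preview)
Your construction is different from the paper's. The paper Taylor-expands $F$ at the pseudolimit $a$, obtaining a single $\sigma$-polynomial $G(x)=F(a+x)-F(a)$ with coefficients in the extension $K\langle a\rangle$; it then picks $c_\rho\in K$ regular for $G$ (Lemma~\ref{regforpc} allows the coefficients to live in an extension), sets $b_\rho=a_{\rho+1}+c_\rho$, and uses Lemma~\ref{reg2} to conclude that $b_\rho-a$ is regular for $G$. This gives $v\big(F(b_\rho)-F(a)\big)=G_v(\gamma_\rho)$, and since $G$ has \emph{fixed} coefficients and no constant term, $G_v$ is strictly increasing and the result is immediate. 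You instead expand at $a_\rho$, keeping all coefficients in $K$, and impose three residue constraints on $\bar u_\rho$ (your clause~(ii) is in fact never used). The advantage of the paper's route is precisely that the coefficients of $G$ do not depend on $\rho$, so no separate monotonicity argument for the minimum is needed.

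There is a real gap in your last paragraph. You assert that for each $\i$ the sequence $v(F_{(\i)}(a_\rho))$ is eventually constant or tends to $\infty$. This is unjustified: $F_{(\i)}$ is itself a $\sigma$-polynomial, and the entire point of the theorem is that valuations of $\sigma$-polynomials along a pc-sequence need not stabilise. Already for the ordinary polynomial $F(x)=x^2-2cx$ with $a=c$ one has $v(F_{(1)}(a_\rho))=v(2(a_\rho-a))=\gamma_\rho$, which is neither eventually constant nor cofinal in $\Gamma$ when the width of $\{a_\rho\}$ is nontrivial. Your conclusion that $(P_\rho^F)_v(\gamma_\rho)$ is eventually strictly increasing is nevertheless correct, via the identity $(P_\rho^F)_v(\gamma_\rho)=G_v(\gamma_\rho)$ for $G$ as above: expanding $F_{(\i)}(a_\rho)=\sum_{\l}\binom{\i+\l}{\i}F_{(\i+\l)}(a)\,\bsigma(a_\rho-a)^{\l}$ gives
\[
v(F_{(\i)}(a_\rho))+\bsigma^{\i}(\gamma_\rho)\ \ge\ \min_{\l}\, v(F_{(\i+\l)}(a))+\bsigma^{\i+\l}(\gamma_\rho)\ \ge\ G_v(\gamma_\rho),
\]
and the reverse inequality follows by the symmetric expansion of $F_{(\i)}(a)$ around $a_\rho$. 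With this identity established, your argument is complete.
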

\begin{proof} First, let us assume that $\Sigma$ consists of a single nonconstant $\sigma$-polynomial
$F(x)$ over $K$.  Put $\gamma_\rho:=v(a_\rho-a) \in \Gamma$; then $\{\gamma_\rho\}$ is an eventually
strictly increasing sequence. Also set 
$$G(x):=F(a+x)-F(a)=\sum_{|\i| \geq 1}F_{\i}(a)\bsigma(x)^{\i}.$$
Note that $G(x)$ is a nonzero $\sigma$-polynomial which has constant term zero and  its coefficients are in $K\langle a\rangle$.  

For all $\rho$ we choose, using Lemma~\ref{regforpc}, $c_\rho \in K$ of valuation $\gamma_\rho$ such that $c_\rho$ is regular for $G(x)$.  Now, define $b_\rho:=a_{\rho+1}+c_\rho$. Then $v(a-b_\rho)=v(a-a_{\rho+1}-c_\rho)=\gamma_\rho$ eventually, so $b_\rho \leadsto a$. Moreover,
$$v(b_{\rho+1}-b_\rho)=v(a_{\rho+2} + c_{\rho+1} - a_{\rho+1} - c_\rho)=\gamma_\rho$$
eventually and hence $\{b_\rho\}$ is equivalent to $\{a_\rho\}$. Since $c_\rho$ is regular for $G(x)$ and $$v(b_\rho-a-c_\rho)=v(a_{\rho+1}+c_\rho-a-c_\rho)=\gamma_{\rho+1}>v(c_\rho)$$ 
eventually, by Lemma~\ref{reg2}, $b_\rho-a$ is regular for $G(x)$ eventually. Then
$$v\big(F(b_\rho)-F(a)\big)=vG(b_\rho-a)=G_v(\gamma_\rho)$$
eventually. Since $G(x)$ has constant term equal to zero, $G_v$ is a strictly increasing function and hence $F(b_\rho) \leadsto F(a)$. The general case can be obtained using Remark~\ref{regremark}. 
\end{proof}

\begin{corollary}\label{corpc}
The same result, where $a$ is removed and one only asks that 
$\{G(b_\rho)\}$ is a pc-sequence. 
\end{corollary}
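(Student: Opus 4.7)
The plan is to reduce to Theorem~\ref{adjustment1} by manufacturing the missing pseudolimit $a$ in a suitable extension. Recall that the preliminaries note that any pc-sequence in a valued difference field has a pseudolimit in some (elementary) extension. So I would first pass to a valued difference field extension $\ca{K}'$ of $\ca{K}$ in which $a_\rho \leadsto a$ for some $a \in K'$. Axiom~1 is preserved because it is a property of the residue difference field and only becomes easier to satisfy in larger structures; moreover, the hypothesis needed by Theorem~\ref{adjustment1} is that $\ca{K}$ itself satisfies Axiom~1, which is untouched by this move.

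Next I would apply Theorem~\ref{adjustment1} verbatim: given the pc-sequence $\{a_\rho\}$ from $K$, the extension $\ca{K}'$ containing the pseudolimit $a$, and the finite set $\Sigma$ of $\sigma$-polynomials over $K$, the theorem produces a pc-sequence $\{b_\rho\}$ from $K$ that is equivalent to $\{a_\rho\}$ and satisfies $G(b_\rho) \leadsto G(a)$ for every nonconstant $G \in \Sigma$. Note that $\{b_\rho\}$ lives in $K$, not just $K'$, which is exactly what the corollary requires.

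Finally, I would observe that any well-indexed sequence that has a pseudolimit is automatically a pc-sequence: if $G(b_\rho) \leadsto G(a)$ in $K'$, then eventually $v(G(b_\rho) - G(a))$ is strictly increasing, so for indices $\rho < \rho' < \rho''$ sufficiently large one has
\[
v\bigl(G(b_{\rho'}) - G(b_\rho)\bigr) = v\bigl(G(b_\rho) - G(a)\bigr)
\quad\text{and}\quad
v\bigl(G(b_{\rho''}) - G(b_{\rho'})\bigr) = v\bigl(G(b_{\rho'}) - G(a)\bigr),
\]
giving the strict inequality required for $\{G(b_\rho)\}$ to be a pc-sequence in $K$.

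There is no real obstacle here; the only points to be careful about are that the construction of $\{b_\rho\}$ in Theorem~\ref{adjustment1} keeps the sequence inside $K$ (which it does, since the $c_\rho$ are chosen in $K$ via Lemma~\ref{regforpc}), and that the pc-property of $\{G(b_\rho)\}$ can be read off in $K$ even though the pseudolimit $G(a)$ is only guaranteed to exist in $K'$. Both are immediate from the definitions.
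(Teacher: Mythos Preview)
Your argument is correct and is precisely the paper's approach: the paper's entire proof is the single line ``Put in an $a$ from an elementary extension,'' and you have simply unpacked this, noting that Theorem~\ref{adjustment1} only needs Axiom~1 on $\ca{K}$ and produces $\{b_\rho\}$ inside $K$, and that pseudoconvergence to $G(a)$ forces $\{G(b_\rho)\}$ to be a pc-sequence.
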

\begin{proof}
Put in an $a$ from an elementary extension.
\end{proof}

\noindent
Using the above results it is easy to obtain the following theorem which will be crucial at later stages. See \cite{BMS} for a detailed treatment. 
 
\begin{theorem} \label{crucial.result.nonwitt}
Let $\ca{K}$ be a valued difference field satisfying Axiom $1$. Let
$\{a_\rho\}$ be a pc-sequence from $K$ and let $a$ in some extension of $\ca{K}$ be such that $a_\rho \leadsto a$.
Let $G(x)$ be a $\sigma$-polynomial over $K$ such that
\begin{enumerate}
\item[(i)]
$G(a_\rho) \leadsto 0$, 
\item[(ii)] $ G_{(\l)}(b_\rho) \not\leadsto 0$ whenever $|\l|\geq 1$ and
$\{b_\rho\}$ is a pc-sequence in $K$ equivalent to $\{a_\rho\}$.
\end{enumerate}
Let $\Sigma$ be a finite set of $\sigma$-polynomials $H(x)$ over $K$. 
Then there is a
pc-sequence $\{b_\rho\}$ in $K$, equivalent to $\{a_\rho\}$, such that 
$G(b_\rho) \leadsto 0$, and
$H(b_\rho) \leadsto H(a)$ for every nonconstant $H$ in $\Sigma$.
\end{theorem}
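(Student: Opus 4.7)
My plan is to apply Theorem~\ref{adjustment1} to the enlarged finite set $\Sigma':=\Sigma\cup\{G\}$, obtaining a pc-sequence $\{b_\rho\}$ in $K$, equivalent to $\{a_\rho\}$, such that $H(b_\rho)\leadsto H(a)$ for every nonconstant $H\in\Sigma$ and $G(b_\rho)\leadsto G(a)$. The whole task then reduces to checking that $G(b_\rho)\leadsto 0$ also holds, i.e.\ that $v(G(b_\rho))$ is eventually strictly increasing.

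For this, work with the Taylor shift
$$H_*(y):=G(a+y)-G(a)=\sum_{|\l|\ge 1}G_{(\l)}(a)\bsigma(y)^{\l},$$
which has zero constant term, so $(H_*)_v$ is strictly increasing. The construction in the proof of Theorem~\ref{adjustment1} selects the auxiliary shifts $c_\rho$ to be regular for precisely this $H_*$, and Lemma~\ref{reg2} then yields that $b_\rho-a$ is eventually regular for $H_*$. Writing $\gamma_\rho:=v(b_\rho-a)$, one reads off $v(G(b_\rho)-G(a))=(H_*)_v(\gamma_\rho)$, an eventually strictly increasing sequence.

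If $G(a)=0$ this immediately gives $G(b_\rho)\leadsto 0$. If $G(a)\ne 0$, the key step is to show $(H_*)_v(\gamma_\rho)<v(G(a))$ for all large $\rho$; once this holds, the ultrametric inequality forces $v(G(b_\rho))=(H_*)_v(\gamma_\rho)$, which is strictly increasing. To obtain this bound I transfer information from the original $\{a_\rho\}$. With $\mu_\rho:=v(a_\rho-a)$, the identity $G(a_\rho)=G(a)+H_*(a_\rho-a)$ together with hypothesis~(i) rules out $(H_*)_v(\mu_\rho)\ge v(G(a))$ persistently: otherwise strict monotonicity of $(H_*)_v$ would force $v(H_*(a_\rho-a))>v(G(a))$ and hence $v(G(a_\rho))=v(G(a))$ eventually constant, contradicting $G(a_\rho)\leadsto 0$. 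The equivalence of $\{a_\rho\}$ and $\{b_\rho\}$ (they share the same strict upper bounds in $\Gamma$) then transports the bound from $\mu_\rho$ to $\gamma_\rho$.

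The main technical subtlety is that in the generality considered here $\gamma_\rho$ need not be cofinal in $\Gamma$, so one cannot simply argue $(H_*)_v(\gamma_\rho)\to\infty$; everything has to be compared against the fixed quantity $v(G(a))$, and the width-equivalence of pc-sequences is what ferries such comparisons from $\{a_\rho\}$ to $\{b_\rho\}$. Hypothesis~(ii) is the minimality condition ensuring that this theorem is applied in the natural setting—the class of $\{a_\rho\}$ carries no simpler vanishing $\sigma$-polynomial—and underpins its use downstream, although the argument sketched here does not invoke it directly.
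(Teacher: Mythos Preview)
Your argument is correct and is exactly the kind of deduction the paper has in mind: it gives no proof of its own here, merely saying the result follows easily from the preceding material (Theorem~\ref{adjustment1} and its proof) and pointing to \cite{BMS} for details. Your observation that hypothesis~(ii) plays no role in establishing the stated conclusion is accurate; it is carried along because the theorem is invoked downstream (Lemma~\ref{henselconf}) in the setting of minimal $\sigma$-polynomials, where (ii) holds automatically and where finer information from the proof---not just the statement---is exploited.
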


\medskip\noindent
{\bf The Multivariable Case and Kapranov's Theorem:} For our model theoretic treatment of valued difference fields we only need to deal with $\sigma$-polynomials in one variable. However, we will provide generalizations of some of the results of this section to the multivariable case with the hope that they can be useful for other applications. A particular goal would be to establish a basis for tropical difference geometry.

Let $F(x_1, x_2, \dots, x_n)=\sum_{\i} M_{\i}$ be a multivariable $\sigma$-polynomial,
where $M_{\i}$'s are its monomials. Then each monomial $M_{\i}$ induces  a function 
\begin{align*}
&{M_{\i}}_v:\Gamma^n \to \Gamma \\ &(\gamma_1,\dots,\gamma_n) \mapsto v(M_{\i}(x_1,\dots,x_n))
\end{align*} 
where $v(x_i)=\gamma_i$. We set 
$F_v(\gamma_1,\dots,\gamma_n):=\min_{\i}\{{M_{\i}}_v (\gamma_1,\dots,\gamma_n)\}$. Note that when $F$ is an ordinary multivariable polynomial, $F_v$ is nothing but the {\em tropicalization} of $F$.  
We define a {\em regular tuple} for $F$ to be an n-tuple $a \in K^n$ such that  
$F_v(v(a))=v(F(a))$.

We see $\Gamma^n$ as a group with component-wise addition and equipped with the natural {\em partial} ordering obtained from the ordering on $\Gamma$. We say that $\gamma, \theta \in \Gamma^n$ are {\em comparable} if $\gamma \leq \theta$ or $\theta \leq \gamma$. We also extend all structure on $K$ to $K^n$ componentwise and we use the same notations that we use for elements of $\Gamma$ and $K$ for n-tuples. So for $a=(a_1,\dots a_n) ,b=(b_1,\dots,b_n) \in K^n$ we have:
\begin{align*}
v(a)&=(v(a_1),\dots,v(a_n));\\
v(a)&<v(b) \Leftrightarrow v(a_i)<v(b_i) \text{ for } i=1,\dots,n; \\
ab&=(a_1b_1,\dots, a_nb_n);\\
a+b&=(a_1+b_1,\dots,a_n+b_n).
\end{align*}
It is easy to see that for $a,b \in K^n$, we have $v(ab)=v(a) + v(b)$ and 
$$v(a+b)\leq \min \{v(a),v(b)\}$$
whenever $v(a)$ and $v(b)$ are comparable. In particular, if $v(a)<v(b)$ then $v(a+b)=v(a)$.
A pc-sequence from $K^n$ is a sequence such that each of its coordinates is a pc-sequence in $K$ and it pseudo-convergences   to an n-tuple if each of its coordinates pseudo-converge to the corresponding coordinate of that n-tuple. Two pc-sequences from $K^n$ are equivalent if the corresponding pc-sequences from each coordinate are equivalent. 

Then it is straightforward to check that all the previous results of this section are valid in the multivariable context. In particular, we note following consequence of the proof of the Lemma~\ref{regforpc}:

\begin{lemma}\label{correg}
Let $a\neq 0 \in K^n$ and $F(x)$ be a nonzero multivariable $\sigma$-polinomial. Then $a$ is regular for $F(x)$ if and only if $(1,\dots,1)$ is not a zero of reduced polynomial $\overline{F(ax)/d}$ over $\k$ for some $d \in K^n$ of valuation
$F_v(v(a))$. More generally, for all $b \in K^n$ such that $v(b)=(0,\dots,0)$, the element $ab$ is irregular for $F$ if and only if $ab$ is a root of $F$ or  $(\bar{b}_1,\dots,\bar{b}_n)$ is a root of $\overline{F(ax)/d}$. 
\end{lemma}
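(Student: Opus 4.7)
The plan is to unfold definitions and replay, in the multivariable setting, the normalization step from the proof of Lemma~\ref{regforpc}.

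First I would pick a monomial $M_{\j}$ of $F$ realizing the minimum $F_v(v(a)) = v(M_{\j}(a))$ and set $d := M_{\j}(a) \in K^{\times}$; any other $d \in K^{\times}$ of the same valuation would serve equally well, since it differs from this one by a factor in $\ca{O}^{\times}$, which does not affect the zero set of the reduction. I would then consider the rescaled $\sigma$-polynomial
$$G(x) := \frac{F(ax)}{d} = \sum_{\i}\frac{M_{\i}(a)}{d}\,\bsigma(x)^{\i}.$$
By the definition of $F_v$ every coefficient lies in $\ca{O}$ and the coefficient indexed by $\j$ is a unit, so the reduced $\bar{\sigma}$-polynomial $\overline{G}$ over $\bk$ is nonzero.

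Next, for the first equivalence I would use that $\bsigma^{\i}(1,\dots,1) = (1,\dots,1)$ for every $\i$, since $\bar{\sigma}(1) = 1$. Hence $G(1,\dots,1) = F(a)/d$, and since $\sigma(\ca{O}) = \ca{O}$, evaluation commutes with reduction, so $\overline{G}(1,\dots,1) = \overline{F(a)/d}$. The equivalences ``$a$ is regular for $F$'' iff $v(F(a)) = v(d)$ iff $v(F(a)/d) = 0$ iff $\overline{F(a)/d} \ne 0$ then close the first half.

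For the stronger statement I would take $b \in K^n$ with $v(b) = (0,\dots,0)$. Since $v(ab) = v(a)$, the same $d$ satisfies $v(d) = F_v(v(ab))$. Evaluating $G$ at $b$ and reducing gives $\overline{G}(\bar{b}_1,\dots,\bar{b}_n) = \overline{F(ab)/d}$. If $F(ab) = 0$ then $ab$ is a root of $F$ (and $\bar{b}$ is automatically a zero of $\overline{G}$); otherwise $ab$ is irregular for $F$ iff $v(F(ab)) > v(d)$ iff $\overline{F(ab)/d} = 0$ iff $\bar{b}$ is a root of $\overline{G}$. Combining the two cases yields the stated dichotomy.

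The argument is essentially definitional, so no real obstacle is anticipated. The only point requiring a brief verification is that reduction commutes with evaluation of $\sigma$-polynomials on valuation-ring inputs, which follows immediately from $\sigma(\ca{O}) = \ca{O}$ together with the definition of $\bar{\sigma}$ as the induced residue automorphism.
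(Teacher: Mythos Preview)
Your proof is correct and is precisely the argument the paper intends: the lemma is stated there as a ``consequence of the proof of Lemma~\ref{regforpc}'' with no further details, and what you wrote is exactly that proof replayed in the multivariable setting, with the normalization $G(x)=F(ax)/d$ and the observation that regularity amounts to $\overline{G}$ not vanishing at the relevant residue point. One cosmetic remark: the statement's ``$d\in K^n$'' is a typo for $d\in K^{\times}$, which you silently (and correctly) interpreted.
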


%Furthermore, let us explain more in detail why 
%ordinary polynomials of one variable respect pseudo-
%convergence and why this is not true for ordinary 
%multi variable polynomials and $\sigma$-polynomials 
%(of one or multi varible) and then relate this part 
%to {\it the tropical geometry}.

Now let us explain how these tools relate to tropical geometry. Let $F(x)=\sum_{\i} M_{\i}$ be multivariable 
$\sigma$-polinomial, with monomials $M_{\i}$, over a non trivially valued field $\ca{K}=(K,\Gamma, 
\bk;v)$. A {\em tropical zero} of $F$ is an element 
$\gamma \in \Gamma^n$, such that $F_v(\gamma)$ is equal to ${M_{\i}}_v(\gamma)={M_{\j}}_v(\gamma)$ for some $\i \neq \j$. Note that if $F$ is an ordinary polynomial in one variable then it has finitely many tropical zeroes. The same assertion is false for multivariable polynomials and $\sigma$-polynomials even in one variable (e.g. if $\sigma$ is the identity on $\Gamma$ then $F(x)=\sigma(x) - x$ has infinitely many tropical zeroes). This difference is actually the reason why pseudo-convergence is preserved under ordinary polynomials in one variable but not under $\sigma$-polynomials. So it is no surprise that the tools we introduced to deal pseudo-convergence are actually closely related to tropical geometry.

%Now suppose that 
%$F(x)$ is an ordinary one variable polinomial. Then
%by ultrametric inequality, an ordinary one variable polynomial has only finitely many tropical zeros. Let $\{a_\rho\}$ be a pc sequence from $K$ psedo converging to some $a \in K$.  Let $G$ be defined by the equality $G(a-x)=F(a)-F(x)$. Since
%$v(a_\rho -a)$ is strictly increasing, $a-a_\rho$ is eventually regular
%for the $G$.  Since $G$ has $0$ as constant term,  $G_v$ is strictly  increasing. It follows that $v(G(a-a_\rho))=G_v(v(a-a_\rho))$  eventually and $v(G(a-a_\rho))$ is eventually strictly  increasing. Hence $G(a-a_\rho) \leadsto 0$, in other words $F(a_\rho) \leadsto F(a)$.

%Now it is clear that the above proof does not work for
%multi variable polynomials since they have in general 
%infinitely many tropical zeros. Also, to have an example in one variable $\sigma$ polinomial case one can consider the case considered in \cite{BMS} where $\sigma(\gamma)=\gamma$ for all $\gamma \in \Gamma$. Then the set of tropcial zeros of the polynomial $\sigma(x) - x$ is $\Gamma$, since infinite.  
 
\medskip\noindent
If $a \in K^n$ is a zero of $F$ then $v(a)$ is a tropical zero of $F$. One of the essential results in tropical geometry is Kapranov's Theorem (see Theorem 2.1.1. \cite{Kap}) which asserts the converse when $K$ is algebraically closed; namely that if $\gamma$ is a tropical zero of $F$ then there is $a \in K^n$ with $v(a)=\gamma$ and $F(a)=0$. Using the lemma below we provide an alternative proof of this fact.

\begin{lemma}\label{F_v=G_v}
Let $F$ be an ordinary one variable polynomial over non trivially valued field $(K,\Gamma, v)$, 
$\gamma \in \Gamma$ be a tropical zero of $F$ and $b \in K$ of valuation $\gamma$. Consider the 
polynomial
$G(x):=\sum_{i \geq 1} F_{i}(b)x^i$. Then $G_v(\gamma)=F_v(\gamma)$ 
%valuation $\gamma$ we have $v(G(x))\geq F_v(\gamma)$. Moreover,
%if $b$ irregular for $G$, then $G_v(\gamma)=F_v(\gamma)$.
\end{lemma}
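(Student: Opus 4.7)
The plan is to prove the identity by a residue-field reduction, in which the tropical-zero hypothesis translates into the statement that a certain reduced polynomial is nonconstant.

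Write $F(x) = \sum_i a_i x^i$, set $\mu := F_v(\gamma)$, and pick $c \in K$ with $v(c) = \mu$ (possible since $\mu = v(a_i) + i\gamma$ for some $i$ with $a_i \neq 0$). I would then consider the rescaled polynomial
\[
\tilde{F}(x) \;:=\; \frac{F(bx)}{c} \;=\; \sum_i \frac{a_i b^i}{c}\, x^i.
\]
Its coefficient of $x^i$ has valuation $v(a_i) + i\gamma - \mu \geq 0$, with equality exactly on the set $I := \{i : v(a_i) + i\gamma = \mu\}$. The hypothesis that $\gamma$ is a tropical zero of $F$ is precisely $|I| \geq 2$, so the reduction $\overline{\tilde{F}} \in \bk[x]$ is nonzero and, crucially, \emph{nonconstant}.

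Next I would translate the statement to $G$. By Taylor expansion around $b$,
\[
\frac{G(bx)}{c} \;=\; \frac{F(b + bx) - F(b)}{c} \;=\; \tilde{F}(1 + x) - \tilde{F}(1),
\]
a polynomial in $x$ whose coefficients are $\mathbb{Z}$-linear combinations of the coefficients of $\tilde{F}$ and therefore lie in $\ca{O}_v$. Reducing modulo $\fr{m}$ gives
\[
\overline{G(bx)/c} \;=\; \overline{\tilde{F}}(1+x) - \overline{\tilde{F}}(1),
\]
which is nonzero exactly because $\overline{\tilde{F}}$ is nonconstant.

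Since the coefficients of $G(bx)/c$ all lie in $\ca{O}_v$ and at least one of them is a unit, the minimum of $v(F_j(b)) + j\gamma - \mu$ for $j \geq 1$ equals $0$; that is, $G_v(\gamma) = \mu = F_v(\gamma)$. The only delicate step is the identification $|I| \geq 2$, which is exactly the tropical-zero condition and is what guarantees that $\overline{\tilde{F}}$ remains nonconstant under the shift $x \mapsto 1+x$.
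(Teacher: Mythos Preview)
Your argument is correct and is genuinely more elementary than the paper's. The paper proves the two inequalities $G_v(\gamma)\ge F_v(\gamma)$ and $F_v(\gamma)\ge G_v(\gamma)$ separately: after passing to the algebraic closure of $K$, it uses the existence of \emph{regular} elements for $G$ of valuation $\gamma$ to get the first inequality, and then (using that $\bk$ is algebraically closed) produces both a regular and an irregular element for $F$ of valuation $\gamma$, whose images under $F$ differ by exactly $F_v(\gamma)$, to get the second. Your residue-field reduction bypasses all of this: the single observation that the tropical-zero hypothesis makes $\overline{\tilde F}$ nonconstant, hence $\overline{\tilde F}(1+x)-\overline{\tilde F}(1)$ nonzero, gives both inequalities at once, works over the original field with no extension, and is insensitive to the residue characteristic or size of $\bk$. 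The paper's route has the advantage of illustrating the regular/irregular machinery developed earlier in the section, but as a proof of this particular lemma your approach is cleaner.
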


\begin{proof}
Since it enough to show that $F_v(\gamma)$ and $G_v(\gamma)$ agree over an extension of $(K,v)$ we can suppose that $K$ is algebraically closed. Let $x$ be of valuation $\gamma$.  Since the residue field is infinite, by the multivariable analogue of Lemma~\ref{regforpc} we can choose a regular $\epsilon \in K^n$ of valuation $\gamma$ which is regular for $G$.
Then $v(\epsilon+b)\geq 
\gamma$ and $F(\epsilon+b)-F(b)=G(\epsilon)$. Since 
$$F(\epsilon+b)\geq F_v(\gamma) \quad \& \quad F(b)\geq F_v(\gamma)$$ we have 
$v(G(\epsilon))\geq F_v(\gamma)$.

The reduced polynomial $\overline{F(bx)/d}$ with $v(d)=F_v(\gamma)$, as in Corollary~\ref{correg}, is nonzero and since $\bk$ is algebraically closed we can pick a root of it which provides an irregular element for $F$ of the form $by$ with $v(y)=0$. On the other hand,  as above, there exists regular elements for $F$ for an arbitrary value in $\Gamma$. Now if $b$ is irregular for $F$ then choose $c \in K$ of valuation $\gamma$ regular for $F$, if $b$ is regular for $F$ then choose $c$ irregular for $F$ of valuation $\gamma$. Then $v(c-b)=\gamma$ by \ref{correg} and $v(F(c)-F(b))=\min\{F(c),F(b)\}=F_v(\gamma)$. Hence we have 
$F_v(\gamma)=v(G(c-b))\geq G_v(\gamma)$. 
\end{proof}
\begin{theorem}\label{kapranov}[Kapranov's theorem]
If $F(x)$ is a nonzero multivariable polynomial  over an algebraically closed valued field $(K,\Gamma,v)$ and $\gamma \in \Gamma^n$ is tropical zero of $F$ then there is root of $F$ of valuation  $\gamma$. 
\end{theorem}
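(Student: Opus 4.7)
The plan is to reduce multivariable Kapranov to the one-variable case by generically specializing $n-1$ of the variables, and then to settle the one-variable case using Lemma~\ref{F_v=G_v} together with the Newton polygon of a shift. Throughout, I will use that $\bk$ inherits algebraic closure from $K$ (monic polynomials over $\ca{O}$ have their roots in $\ca{O}$ by integrality) and is therefore infinite, which is what powers the generic-choice arguments.

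For the reduction, since $\gamma$ is a tropical zero at least two distinct monomials $M_{\i}, M_{\j}$ of $F$ realize $F_v(\gamma)$; after permuting coordinates I may assume $i_1\ne j_1$. Fix any $b_k \in K$ with $v(b_k) = \gamma_k$ for $k \geq 2$ and consider specializations $a_k = b_k u_k$ with $u_k \in \ca{O}^\times$. The coefficient of $x_1^{i_1}$ in $\tilde F(x_1) := F(x_1, a_2, \dots, a_n)$ is a polynomial in $u_2, \dots, u_n$ over $K$; after dividing by an element of the formally-expected valuation its reduction modulo $\fr m$ is a nonzero polynomial $P_{i_1}$ over $\bk$. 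Applying Lemma~\ref{zariskitop} to the product $\prod_{i_1} P_{i_1}$, infiniteness of $\bk$ yields $(\bar u_2, \dots, \bar u_n) \in (\bk^\times)^{n-1}$ at which every $P_{i_1}$ is nonzero; lifting back, each coefficient of $\tilde F$ acquires its formally-expected valuation. Consequently $\tilde F \ne 0$, $\tilde F_v(\gamma_1) = F_v(\gamma)$, and both $i_1$ and $j_1$ realize this minimum, so $\gamma_1$ is a tropical zero of $\tilde F$. One-variable Kapranov then produces $a_1$ with $v(a_1) = \gamma_1$ and $F(a_1, \dots, a_n) = 0$.

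For the one-variable case, take $b \in K$ with $v(b) = \gamma$. Because $\gamma$ is a tropical zero of $F$, the reduced polynomial $\overline{F(bx)/d}$ (with $v(d) = F_v(\gamma)$) has at least two distinct monomials; factoring out the lowest power of $x$ leaves a nonconstant polynomial with nonzero constant term, which has a root in $\bk^\times$ by algebraic closure. By Lemma~\ref{correg} this root lifts to $bc$ of valuation $\gamma$, and either $bc$ is already a root of $F$ or $v(F(bc)) > F_v(\gamma)$; relabel so that $b$ itself satisfies the latter. Now consider
\[
H(y) := F(b+y) = F(b) + \sum_{i \geq 1} F_i(b)\, y^i.
\]
By Lemma~\ref{F_v=G_v}, the tail $G(y) := \sum_{i\geq 1} F_i(b) y^i$ has $G_v(\gamma) = F_v(\gamma)$, so some $i^* \geq 1$ gives $v(F_{i^*}(b)) + i^*\gamma = F_v(\gamma) < v(F(b))$. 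The point $(i^*, v(F_{i^*}(b)))$ thus sits strictly below the line through $(0, v(F(b)))$ of slope $-\gamma$, and the leftmost edge of the Newton polygon of $H$ has slope strictly less than $-\gamma$. Since $K$ is algebraically closed, this slope is the negative of the valuation of an actual root $y_0 \in K$ of $H$; then $v(y_0) > \gamma$ and $b + y_0$ is a root of $F$ of valuation $\gamma$.

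The main obstacle is the generic-specialization step: a single tuple $(a_2, \dots, a_n)$ must simultaneously preserve the expected valuation of every coefficient of $\tilde F$ in $x_1$. This reduces to avoiding finitely many proper hypersurfaces in $(\bk^\times)^{n-1}$, handled by Lemma~\ref{zariskitop} and infiniteness of $\bk$. After that, the one-variable piece is essentially a Newton-polygon reinterpretation of Lemma~\ref{F_v=G_v}, and algebraic closure of $K$ is needed only at the last step to cash the leftmost Newton-polygon slope into an actual root.
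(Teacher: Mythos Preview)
Your proof is correct and follows a genuinely different route from the paper's.

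For the reduction to one variable, the paper first manufactures an irregular tuple $a$ (via a root of $\overline{F(ax)/d}$ in $\bk^n$) and then argues that $a_n$ is irregular for the one-variable restriction $F^a(y)=F(a_1,\dots,a_{n-1},y)$. You instead specialize $x_2,\dots,x_n$ generically so that every coefficient of $\tilde F(x_1)$ attains its formally expected valuation, which transports the tropical-zero condition directly to $\tilde F$. Both reductions boil down to avoiding finitely many hypersurfaces in $(\bk^\times)^{n-1}$, but yours does not need Lemma~\ref{correg} and is a bit cleaner.

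For the one-variable case the contrast is sharper. The paper iterates: from an irregular $b_0$ it uses Lemma~\ref{F_v=G_v} and divisibility of $\Gamma$ to produce $b_1$ with $v(b_1-b_0)>v(b_0)$ and $v(F^a(b_1))>v(F^a(b_0))$, then builds a (possibly transfinite) pc-sequence of algebraic type and terminates by a cardinality argument. You replace all of this by a single Newton-polygon step: Lemma~\ref{F_v=G_v} forces a vertex $(i^*,v(F_{i^*}(b)))$ to sit strictly below the line of slope $-\gamma$ through $(0,v(F(b)))$, so the leftmost edge of the Newton polygon of $H(y)=F(b+y)$ has slope $<-\gamma$, and algebraic closure of $K$ yields a root $y_0$ with $v(y_0)>\gamma$. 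This is shorter and more transparent, at the cost of invoking the classical Newton-polygon theorem from outside the paper; the paper's construction is more involved but entirely self-contained within its pseudo-convergence framework. (Indeed, one could push your line further: a tropical zero $\gamma$ of a one-variable $F$ is precisely a slope $-\gamma$ of its Newton polygon, so the Newton-polygon theorem applied to $F$ itself already produces a root of valuation $\gamma$, without the shift or Lemma~\ref{F_v=G_v}.)
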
 
\begin{proof} Let $a:=(a_1,\dots, a_{n-1},a_n) \in K^n$ be of valuation $\gamma$. Since $\gamma$ is a tropical zero of $F$, we can w.l.o.g. suppose that $a$ is   irregular for $F(x)$: In fact, if not; repeating the proof above,  by \ref{correg} we can consider the polynomial  $\overline{F(ax)/d}$ over $\bk$; since $\bk$ is algebraically closed we can take a root of it which provides an irregular element for $F(x)$ of the form $ab$ with $v(b)=(0,\dots,0)$. Remark in this case that by \ref{correg} $a_n$ is irregular for the one variable polynomial  $F^a:=F(a_1,\dots, a_{n-1}, y)$: the fact that $(1,\dots,1)$ is a root of $\overline{F(ax)/d}$ implies $1$ is a root of $\overline{F^a(a_ny)/d}$. Now it is enough to find a zero of $F^a$ of valuation  $v(a_n)$. 

Set $b_0:=a_n$ and $\theta:=v(b_0)$. We will first find $b_1 \in K$ of valuation $\theta$ such that $v(b_1-b_0)>\theta$ and $F^a(b_1)>F^a(b_0)$. 
Set  $$G(x):= \sum_{i \geq 1} F^a_{(i)}(b_0)x^i.$$ By divisibility of the value group $\Gamma$, we can pick $\delta$ such that $G_v(\delta)=v(F^a(b_0))$. 
By Lemma~\ref{F_v=G_v}, we have $G_v(\theta)=F^a_v(\theta)$ and since $b_0$ is irregular for $F^a$ we get $v(F^a(b_0))>F^a_v(\theta)$. Putting these together with the fact that $G_v$ is strictly increasing we conclude that $\delta > \theta$.  Let $\epsilon$ be such that $v(\epsilon)=\delta$. Set $b_1=b_0 + \epsilon u$ where $u$ is a new variable. 
Then $G(\epsilon u)+ F^a(b_0)= F^a(b_1)$. Since 
$G_v(\delta)=v(F^a(b_0))$ the polynomial $$H(u):=\frac{G(\epsilon u)}{F^a(b_0)}$$ has coefficients from the valuation ring of $K$, and  the reduced polynomial $\overline{H}(u)$ over the residue field is nonzero and has constant term zero. Now, in order to have $F^a(b_1)>F^a(b_0)$, we pick a $c \in K$ of valuation zero such that $\bar{c} \in \bk$ is zero of the polynomial $\overline{H} + 1$ and set $b_1=b_0 + \epsilon c$. 

Proceeding inductively we can  construct a pc-sequence $\{b_k\}_{k \in \mathbb{N}}$ with 
$F^a(b_{k+1})>F^a(b_{k})$ if $F^a(b_k)\neq 0$ for all $k\in \mathbb{N}$.  Since $\{b_k\}$ is of algebraic type  $\{b_k\} \leadsto b_{\omega}$ for some $b_{\omega} \in K$. Since $\{F^a(b_k)\} \leadsto 0$ and $\{F^a(b_k)\} \leadsto F^a(b_{\omega})$ we have $F^a(b_{\omega})>F^a(b_k)$ for all $k \in \mathbb{N}$. Then again, either $F^a(b_{\omega})=0$ or we can continue extending our sequence as before. Hence we can construct
a pc-sequence of arbitrary length $\{b_\rho \}$ if during the process we do not get a zero $F^a$. We must obtain a zero of $F^a$  considering the cardinality of $K$.
\end{proof}

\end{section}

\section{Approximating zeroes of $\sigma$-polynomials}\label{newtonhensel1}

Let $\ca{K}$ be a valued difference field, of residue characteristic zero as usual.  
Let $G(x)$ be a $\sigma$-polynomial over $K$ of order $\le n$ and $a\in K$.
Let $\i$ range over tuples $(i_0,\dots,i_n)\in \mathbb{N}^{n+1}$, and 
likewise with $\j=(j_0,\dots,j_n)$, $\l=(l_0, \dots ,l_n)$. Much of this section is parallel with the corresponding sections of
\cite{BMS} and \cite{ad}. There are some points which require close inspection, in which case we present proofs in full detail eventhough they look identical to what is already known from \cite{BMS} and \cite{ad}; otherwise we just point out the necessary references.

\begin{definition} We say $(G,a)$ is in 
{\em $\sigma$-hensel configuration} if $G\notin K$ and there is 
$\i$ with $|\i|=1$ and $\gamma \in \Gamma$ such that
\begin{enumerate}
 \item[$(i)$] $v(G(a))=v(G_{(\i)}(a))+\bsigma^{\i}\gamma \leq v(G_{(\j)}(a)) +\bsigma^{\j}\gamma$ whenever $|\j|=1$,
 \item[$(ii)$] $v(G_{(\j)}(a))+\bsigma^{\j}\gamma < v(G_{(\j+\l)}(a)) +\bsigma^{\j+\l}\gamma$ whenever $\j, \l \neq 0$ and \linebreak $G_{(\j)} \neq 0$.
\end{enumerate}
\end{definition}

\medskip\noindent
If $(G,a)$ is in $\sigma$-hensel configuration, then 
$G_{(\j)}(a) \neq 0$ whenever $\j\ne 0$ and $G_{(\j)} \neq 0$,
so $G(a)\ne 0$, and therefore $\gamma$ as above satisfies 
$$v(G(a))=\min_{|\j|=1} v(G_{(\j)}(a))+\bsigma^{\j}\gamma,$$ so 
is unique, and we set $\gamma(G,a):=\gamma$. If $(G,a)$ is not in $\sigma$-hensel 
configuration, we set $\gamma(G,a):=\infty$. 

\begin{remark} Suppose $G$ is nonconstant, $G(a)\ne 0$, 
$v(G(a))>0$ and $v(G_{(\i)}(a))=0$ for all $\i\ne 0$ with 
$G_{(\i)} \neq 0$. Then
$(G,a)$ is in $\sigma$-hensel configuration with $\gamma(G,a)>0$.
\end{remark}

\bigskip
The definition of $\sigma$-hensel configuration above is identical with the corresponding definition in \cite{vdca} and \cite{BMS} . In order to obtain Lemma~\ref{newton2} we need to impose the following condition on the residue difference field.

\medskip\noindent
{\bf Axiom $2_n$.} If $\alpha_0,\dots,\alpha_n \in \bk$ are not all $0$, 
then the equation 
$$1+\alpha_0x + \dots + \alpha_m\bar{\sigma}^n(x)=0$$ has a solution in $\bk$.

We say that a difference field satisfies Axiom 2 if it satisfies Axiom $2_n$ for all n, such a difference field will be called {\em linear difference closed}. This axiom is very similar to Kaplansky's condition on residue fields in the study of valued fields of positive residue characteristic. One just replaces the Frobenius with 
$\bar{\sigma}$, see \cite{vdca} for a detailed study of the connection. It is shown in \cite{vdca} that Axiom 2 can not be avoided if one wants to obtain Theorem~\ref{unique.max.imm.ext}. Nonetheless it is conceivable that AKE-type results can be obtained without having Theorem~\ref{unique.max.imm.ext} and hence without requiring the residue difference field to be linear difference closed. Presently we avoid this discussion and assume throughout the rest of the paper that all valued difference fields under consideration satisfy Axiom 2. Note that Axiom 2 implies Axiom 1. The next lemma is identical to Lemma 4.4 from ~\cite{vdca}, and one can see that its proof remains valid in our context when the details are made explicit. 

\begin{lemma}\label{newton2} Suppose that $\ca{K}$ satisfies Axiom $2$, and 
$(G,a)$ is in $\sigma$-hensel configuration. Then there is $b \in K$
such that \begin{enumerate}
\item[$(1)$] $v(b-a) \ge \gamma(G,a), \quad v(G(b))>v(G(a))$, 
\item[$(2)$] either $G(b)=0$, or $(G,b)$ is in $\sigma$-hensel configuration.
\end{enumerate} 
For any such $b$ we have $v(b-a) = \gamma(G,a)$ and $\gamma(G,b)>\gamma(G,a)$. \end{lemma}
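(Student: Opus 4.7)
Set $\gamma := \gamma(G,a)$. My plan is in two movements: first I would construct an explicit $b = a + cy$ with $v(c) = \gamma$ by reducing $G(a+cy)/G(a)$ modulo $\fr{m}$ and solving a linear $\bar{\sigma}$-equation in $\bk$ via Axiom~$2$; then I would show that the $\sigma$-hensel configuration propagates from $a$ to $b$ with a strictly larger parameter $\gamma'$.

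Pick $c \in K^{\times}$ with $v(c) = \gamma$ and form the Taylor expansion
$$G(a+cy) \;=\; G(a) \;+\; \sum_{|\i|\geq 1} G_{(\i)}(a)\, \bsigma(c)^{\i}\, \bsigma(y)^{\i}$$
in a new variable $y$. Dividing through by $G(a)$, condition $(i)$ places every coefficient of a $|\i|=1$ term in $\ca{O}$ with at least one a unit, and condition $(ii)$ places every coefficient of an $|\i|\geq 2$ term in $\fr{m}$. Reducing modulo $\fr{m}$ thus produces an equation $1 + \sum_k \alpha_k \bar{\sigma}^k(\bar y) = 0$ over $\bk$ with not all $\alpha_k$ zero; Axiom~$2$ furnishes a solution $\bar y \in \bk$, necessarily nonzero. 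Any lift $y \in \ca{O}^{\times}$ then gives $b := a + cy$ with $v(b-a) = \gamma$ and $v(G(b)/G(a)) > 0$. For the forced equality $v(b-a) = \gamma$ (given only $v(b-a) \geq \gamma$ and $v(G(b))>v(G(a))$), I would observe that if instead $v(b-a) > \gamma$, then substituting $\epsilon := b-a$ into the same expansion and using strict monotonicity of every nonzero $\bsigma^{\i}$ (with nonnegative entries) together with $(i)$ and $(ii)$ shows every nonconstant term exceeds $v(G(a))$, forcing $v(G(b)) = v(G(a))$ and contradicting the hypothesis.

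For part $(2)$, assume $G(b) \neq 0$. Taylor-expanding $G_{(\j)}$ at $a$ for each $\j \neq 0$ with $G_{(\j)} \neq 0$ and invoking $(ii)$, every correction term has valuation strictly above $v(G_{(\j)}(a))$, hence $v(G_{(\j)}(b)) = v(G_{(\j)}(a))$. Writing $\mu_k := v(G_{(\i)}(a))$ for the $|\i|=1$ index supported at position $k$ with $G_{(\i)} \neq 0$, I would define
$$\gamma' \;:=\; \max_k \sigma^{-k}\bigl(v(G(b)) - \mu_k\bigr) \;\in\; \Gamma,$$
over those finitely many $k$. Condition $(i)$ at $b$ with parameter $\gamma'$ holds by construction. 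Taking $k_0$ that realizes the minimum in $(i)$ at $a$, the strict inequality $\sigma^{k_0}(\gamma) + \mu_{k_0} = v(G(a)) < v(G(b))$ gives $\gamma < \sigma^{-k_0}(v(G(b)) - \mu_{k_0}) \leq \gamma'$. Condition $(ii)$ at $b$ reduces to $v(G_{(\j+\l)}(a)) - v(G_{(\j)}(a)) > -\bsigma^{\l}(\gamma')$; by $(ii)$ at $a$ this difference already exceeds $-\bsigma^{\l}(\gamma)$, and $\gamma' > \gamma$ combined with strict monotonicity of $\bsigma^{\l}$ (for $\l \neq 0$ with nonnegative entries) gives $-\bsigma^{\l}(\gamma) > -\bsigma^{\l}(\gamma')$, closing the chain.

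The only conceptual input is the appeal to Axiom~$2$, which solves the reduced linear $\bar{\sigma}$-equation over $\bk$; everything else is careful propagation of $(i)$ and $(ii)$ through two Taylor expansions — one for $G$ itself to produce $b$, one for each $G_{(\j)}$ to move $(ii)$ across to $b$ — and rests throughout on the strict order-preservation of every nonzero $\bsigma^{\i}$ with nonnegative coefficients. The main place requiring care is the bookkeeping of inequalities when verifying $(ii)$ at $b$ with the new parameter $\gamma'$, but there is no hidden obstruction beyond matching indices correctly.
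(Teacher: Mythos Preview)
Your argument is correct and follows essentially the same route as the paper's proof: rescale the Taylor expansion of $G(a+cy)$ by $G(a)$, reduce modulo $\fr{m}$ to a linear $\bar\sigma$-equation solvable by Axiom~2, then Taylor-expand each $G_{(\j)}$ at $a$ to preserve $v(G_{(\j)})$ and push the configuration to $b$ with a strictly larger parameter. Your explicit formula $\gamma'=\max_k\sigma^{-k}(v(G(b))-\mu_k)$ is exactly the $\gamma_1$ the paper picks implicitly, and your separate treatment of the forced equality $v(b-a)=\gamma$ for \emph{any} such $b$ is slightly more careful than the paper's presentation, which folds this into the ``imposing $v(G(b))>v(G(a))$ forces\ldots'' clause.
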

\begin{proof} Let $\gamma=\gamma(G,a)$, pick $\epsilon \in K$ with $v(\epsilon)=\gamma$. Let 
$b=a+\epsilon u$ where $u \in K$ is to be determined later; we only impose 
$v(u)\ge 0$ for now. 
Consider $$G(b)=G(a) + \sum\limits_{|\i| \geq 1}G_{(\i)}(a) \cdot \bsigma(b-a)^{\i}.$$ 
Therefore $G(b)=G(a) \cdot (1+ \sum\limits_{|\i| \geq 1} c_{\i} \cdot \bsigma(u)^{\i})$, where 
$$c_{\i}=\frac{G_{(\i)}(a) \cdot \bsigma(\epsilon)^{\i}}{G(a)}.$$ 
From $v(\epsilon)=\gamma$ we obtain 
$\min_{|\i|=1} v(c_{\i}) =0$ and $v(c_{\j})>0$ for $|\j|>1$. Then imposing
$v(G(b)) > v(G(a))$ forces $\bar{u}$ to be a solution of the 
equation
$$1+\sum\limits_{|\i|=1}\bar{c_{\i}} \cdot \bar{\bsigma}(x)^{\i}=0.$$ 
By Axiom $2$ we can take $u$ with this property, and then $v(u)=0$, so 
$v(b-a)=\gamma(G,a)$ and $v(G(b))>v(G(a))$. 

Assume that $G(b) \ne 0$. It remains to show that then $(G,b)$ is in
$\sigma$-hensel configuration with $\gamma(G,b)>\gamma$. 
Let $\j \neq 0$, $G_{(\j)} \neq 0$ and consider
$$G_{(\j)}(b)=G_{(\j)}(a) + \sum\limits_{\l \ne 0} G_{(\j)(\l)}(a) \cdot \bsigma(b-a)^{\l}.$$
Note that $G_{(\j)}(a) \neq 0$. 
Since $\cha{\bk}=0$, 
$v(G_{(\j)(\l)}(a))=v(G_{(\j+\l)}(a))$. 
Therefore, for all $\l\ne 0$, $$v\big(G_{(\j)(\l)}(a) \cdot \bsigma(b-a)^{\l}\big)>v(G_{(\j)}(a)),$$ 
hence $v(G_{(\j)}(b))=v(G_{(\j)}(a))$. If $|\i|=1$, then 
$\theta \mapsto \bsigma^{\i}(\theta)$ is an automorphism of $\Gamma$.
Since $G(b)\ne 0$, it follows that  
we can pick $\gamma_1 \in \Gamma$ such that 
$$G(b)=\min_{|\i|=1} v(G_{(\i)}(b)) + \bsigma^{\i}\gamma_1.$$ 
Note that $\gamma_1 > \gamma$ because $v(G(b))>v(G(a))$ and $v(G_{(\i)}(b))=v(G_{(\i)}(a))$ for $\i \neq 0$. 
Also for $\i,\j \neq 0$ and $\theta \in \Gamma$ with $\theta>0$ we have $\bsigma^{\i}\theta<\bsigma^{\i+\j}\theta$, since $\sigma$ is order preserving. Now the inequality
$$v(G_{(\i)}(a)) + \bsigma^{\i}\gamma < v(G_{(\i+\j)}(a)) + \bsigma^{\i+\j}\gamma$$
together with $\gamma_1>\gamma$ leads to 
$$v(G_{(\i)}(a)) + \bsigma^{\i}\gamma_1 < v(G_{(\i+\j)}(a)) + \bsigma^{\i+\j}\gamma_1. \footnote{With $\i=(0,1,0 \dots,0)$, the multi-index $(0,0,1,0,\dots,0)$ can not be written as $\i+\j$ and hence we do not claim $\sigma(\gamma_1)<\sigma^2(\gamma_1)$, which is not true in general. }$$ 
Hence $(G,b)$ is in $\sigma$-hensel configuration with $\gamma_1=\gamma(G,b)$.
\end{proof}

Using the above lemma it is straightforward to obtain the following, see ~\cite{ad} for a proof.

\begin{lemma}\label{hensel.imm} Suppose $\ca{K}$
satisfies Axiom $2$ and $G(x)$ is $\sigma$-henselian at $a$.
Suppose also that there is no $b\in K$ with $G(b)=0$ and $v(a-b) = v(G(a))$.
Then there is a pc-sequence $\{a_\rho\}$ in $K$ 
with the following properties:
\begin{enumerate}
\item $a_0=a$ and $\{a_\rho\}$ has no pseudolimit in $K$;
\item $\{v(G(a_\rho))\}$ is strictly increasing, and thus 
 $G(a_\rho) \leadsto 0$;
\item $v(a_{\rho'} -a_\rho)=v\big(G(a_\rho)\big)$ whenever $\rho<\rho'$;
\item for any extension $\ca{K}'=(K',\dots)$ of $\ca{K}$ and $b,c\in K'$ such that $a_\rho \leadsto b$, $G(c)=0$ and $v(b-c)\ge v(G(b))$, we have $a_\rho \leadsto c$.
\end{enumerate}
\end{lemma}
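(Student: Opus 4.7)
The plan is to construct $\{a_\rho\}$ by transfinite recursion, invoking Lemma~\ref{newton2} at each successor stage. Setting $a_0=a$ and assuming by induction that $(G,a_\rho)$ is in $\sigma$-hensel configuration with $G(a_\rho)\ne 0$, Lemma~\ref{newton2} yields $a_{\rho+1}\in K$ with $v(a_{\rho+1}-a_\rho)=\gamma(G,a_\rho)$, $v(G(a_{\rho+1}))>v(G(a_\rho))$, and either $G(a_{\rho+1})=0$ or $(G,a_{\rho+1})$ is again in $\sigma$-hensel configuration with $\gamma(G,a_{\rho+1})>\gamma(G,a_\rho)$. The stated no-close-zero hypothesis on $a$ is designed to rule out $G(a_{\rho+1})=0$ along the way, because any such zero would lie at the controlled distance $\gamma(G,a_0)$ from $a$; hence the successor step can always be carried out.

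At a limit ordinal $\lambda$, if the partial sequence $\{a_\rho\}_{\rho<\lambda}$ has no pseudolimit in $K$, I stop; otherwise I pick a pseudolimit $a_\lambda\in K$ and verify, through the Taylor expansion of $G$ around $a_\lambda$ and the strict increase of $\{\gamma(G,a_\rho)\}$, that $(G,a_\lambda)$ is still in $\sigma$-hensel configuration, allowing the recursion to continue. The process must terminate at some ordinal of cardinality at most $|K|^+$, since $v(a_{\rho+1}-a_\rho)<\infty$ forces the $a_\rho$ to be pairwise distinct. Because termination via a zero is blocked by hypothesis, termination must occur at a limit stage with no pseudolimit in $K$, yielding property~(1). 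Property~(2) is immediate from the successor construction together with the limit-stage verification.

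For property~(3) I would use the $\sigma$-hensel identity $v(G(a_\rho))=v(G_{(\i)}(a_\rho))+\bsigma^{\i}(\gamma(G,a_\rho))$ for some $|\i|=1$, combined with the strict increase of $\gamma(G,a_\cdot)$ and the standard ultrametric calculation, to identify $v(a_{\rho'}-a_\rho)$ with $v(G(a_\rho))$ for $\rho<\rho'$. For property~(4), given an extension $\ca{K}'$ of $\ca{K}$ and $b,c\in K'$ with $a_\rho\leadsto b$, $G(c)=0$ and $v(b-c)\ge v(G(b))$, I would decompose $c-a_\rho=(c-b)+(b-a_\rho)$ and show $v(b-a_\rho)<v(b-c)$ eventually, so that $v(c-a_\rho)=v(b-a_\rho)$ is eventually strictly increasing and $a_\rho\leadsto c$ follows. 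This reduces to comparing the fixed value $v(G(b))$ with the strictly increasing widths $v(a_{\rho'}-a_\rho)$ produced by~(3).

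The main obstacle will be controlling $v(G(b))$ in the extension: the raw information $G(a_\rho)\leadsto 0$ does not by itself give a useful estimate for $v(G(b))$, since $G$ may fail to transport pseudo-convergence. To bridge this gap I would apply Theorem~\ref{adjustment1} to replace $\{a_\rho\}$ by an equivalent pc-sequence along which $G(a_\rho)\leadsto G(b)$, forcing $v(G(b))$ to exceed every $v(G(a_\rho))$ eventually and hence every width $v(a_{\rho'}-a_\rho)$. This equivalent-pc-sequence maneuver, together with the bookkeeping between $\gamma(G,a_\rho)$ and $v(G(a_\rho))$ needed in~(3), is where the argument becomes technical.
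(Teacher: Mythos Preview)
Your approach is correct and is precisely what the paper intends: the paper does not give its own proof of this lemma but simply says ``Using the above lemma it is straightforward to obtain the following, see~\cite{ad} for a proof,'' where ``the above lemma'' is Lemma~\ref{newton2}. Your transfinite recursion driven by Lemma~\ref{newton2}, with the limit-stage check that the $\sigma$-hensel configuration persists at a pseudolimit, is exactly the argument carried out in that reference.

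One minor remark: your proposed route to~(4) via Theorem~\ref{adjustment1} is heavier than necessary. The limit-stage verification you already perform shows that for any pseudolimit $b$ of $\{a_\rho\}$ in an extension, either $G(b)=0$ (forcing $b=c$) or $(G,b)$ is in $\sigma$-hensel configuration with $\gamma(G,b)>\gamma(G,a_\rho)$ for all $\rho$; combined with $v(b-a_\rho)=\gamma(G,a_\rho)$ eventually, this gives $v(b-c)>v(b-a_\rho)$ directly, without passing to an equivalent sequence. Also, as you noticed, the statement as printed conflates $v(G(a_\rho))$ with $\gamma(G,a_\rho)$; the ``bookkeeping'' you flag is really just that the intended quantity throughout is $\gamma(G,a_\rho)$, and with that reading no extra work is needed.
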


\begin{definition}\label{shensel2} A valued difference field $\ca{K}$ is {\em $\sigma$-henselian\/} if 
for all $(G,a)$ in $\sigma$-hensel configuration there is 
$b \in K$ such that $v(b-a)=\gamma(G,a)$ and $G(b)=0$.
\end{definition}

\begin{lemma}\label{hensax1} If $\ca{K}$ is 
$\sigma$-henselian, then $\ca{K}$ satisfies Axiom $2$.
\end{lemma}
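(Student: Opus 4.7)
Given $\alpha_0,\dots,\alpha_n\in\bk$ not all zero, the natural move is to lift the desired equation to $K$ and solve it there using $\sigma$-henselianity at the point $a=0$. Choose $a_0,\dots,a_n\in\ca{O}$ with $\pi(a_i)=\alpha_i$, and form the linear $\sigma$-polynomial
$$G(x)\;=\;1+a_0 x+a_1\sigma(x)+\cdots+a_n\sigma^n(x).$$
Since some $\alpha_i\neq 0$, $G$ is nonconstant, so $G\notin K$. The strategy is to check that $(G,0)$ is in $\sigma$-hensel configuration with $\gamma(G,0)=0$, apply Definition~\ref{shensel2} to obtain a zero $b\in K$ with $v(b)=0$, and then reduce modulo $\fr{m}$.

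\textbf{Verification of $\sigma$-hensel configuration.} Compute $G(0)=1$, so $v(G(0))=0$, and for $|\j|=1$ (say $\j=e_k$) we have $G_{(\j)}(0)=a_k$, hence $v(G_{(\j)}(0))=v(a_k)\ge 0$, with equality exactly when $\alpha_k\neq 0$. Because $G$ is linear in the indeterminates $x,\sigma(x),\dots,\sigma^n(x)$, all higher $\sigma$-derivatives vanish: $G_{(\bl)}=0$ whenever $|\bl|\ge 2$. Picking $\gamma:=0$ and $\i:=e_{k^*}$ for any $k^*$ with $\alpha_{k^*}\neq 0$, we see $\bsigma^{\i}(\gamma)=0$ for every $\i$, and condition (i) becomes $0=v(a_{k^*})\le v(a_k)$, which holds. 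Condition (ii) is vacuous (or trivially true, reading $v(0)=\infty$), since the only nonzero $G_{(\j)}$'s have $|\j|=1$ and adding another nonzero $\bl$ yields $G_{(\j+\bl)}=0$. Thus $(G,0)$ is in $\sigma$-hensel configuration with $\gamma(G,0)=0$.

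\textbf{Conclusion.} By $\sigma$-henselianity, Definition~\ref{shensel2} yields $b\in K$ with $v(b-0)=0$ and $G(b)=0$. In particular $b\in\ca{O}^{\times}$, so $\bar b:=\pi(b)\in\bk$ is well defined and nonzero. Applying $\pi$ to the identity $G(b)=0$, and using that $\pi(\sigma^k(b))=\bar\sigma^k(\bar b)$ for $b\in\ca{O}$, gives
$$1+\alpha_0\bar b+\alpha_1\bar\sigma(\bar b)+\cdots+\alpha_n\bar\sigma^n(\bar b)=0,$$
which is exactly the instance of Axiom~$2_n$ we needed.

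\textbf{Main obstacle.} There is no real difficulty; the only point requiring care is the bookkeeping for the two clauses of the $\sigma$-hensel configuration. The linearity of $G$ trivializes clause (ii), and the choice $\gamma=0$ collapses clause (i) to the obvious valuation inequality among the $a_k$. The proof is essentially a one-line application of $\sigma$-henselianity once the right linear $\sigma$-polynomial is set up.
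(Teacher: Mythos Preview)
Your proof is correct and follows essentially the same approach as the paper's: lift the linear equation to $K$, observe that $(G,0)$ is in $\sigma$-hensel configuration with $\gamma(G,0)=0$, apply $\sigma$-henselianity, and reduce the resulting zero modulo $\fr{m}$. The only cosmetic difference is that the paper chooses the specific lifts $a_i=0$ when $\alpha_i=0$, whereas you allow arbitrary lifts in $\ca{O}$; your more detailed verification of clauses (i) and (ii) is a welcome expansion of the paper's ``it is easy to see''.
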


\begin{proof} Assume that $\ca{K}$ is $\sigma$-henselian and 
let $\alpha_0, \dots, \alpha_n \in \bk$, not all zero. Let 
$$G(x)=1+ a_0x + \cdots + a_n\sigma^n(x)  \qquad (\text{all }a_i\in K),$$
where $a_i=0$ if $\alpha_i=0$, and $v(a_i)=0$ with $\bar{a}_i=\alpha_i$ if
$\alpha_i \ne 0$, for $i=0, \dots, n$. It is 
easy to see that $(G,0)$ is in $\sigma$-hensel configuration with $\gamma(G,a)=0$. 
This gives $a \in K$ such that $v(a)=0$ and $G(a)=0$. Then $\bar{a}$ 
is a solution of 
$$1+\alpha_0x + \cdots + \alpha_n \bar{\sigma}^n(x)=0.$$
\end{proof}

\begin{definition}
We say $\{a_\rho\}$ is of
{\em $\sigma$-algebraic type over $K$\/} if $G(b_\rho) \leadsto 0$
for some $\sigma$-polynomial $G(x)$ over $K$ and an equivalent 
pc-sequence $\{b_\rho\}$ in $K$.

If $\{a_\rho\}$ is of $\sigma$-algebraic type over $K$, then a 
{\em minimal $\sigma$-polynomial of  $\{a_\rho\}$ over $K$\/} is a 
$\sigma$-polynomial $G(x)$ over $K$ with the following properties: 
\begin{enumerate}
\item[(i)] $ G(b_\rho) \leadsto 0$ for some pc-sequence 
$\{b_\rho\}$ in $K$, 
equivalent to $\{a_\rho\}$;
\item[(ii)] $ H(b_\rho) \not\leadsto 0$ whenever $H(x)$ 
is a $\sigma$-polynomial 
over $K$ of lower complexity than $G$ and 
$\{b_\rho\}$ is a pc-sequence in $K$ equivalent to $\{a_\rho\}$.
\end{enumerate}
\end{definition}

\noindent
If  $\{a_\rho\}$ is of $\sigma$-algebraic type over $K$, then 
$\{a_\rho\}$ clearly has a minimal $\sigma$-polynomial over $K$.

\begin{lemma}\label{henselconf}
Suppose $\ca{K}$ satisfies Axiom 2. 
Let $\{a_\rho\}$ from $K$ be a pc-sequence of
$\sigma$-algebraic type over $K$ with minimal $\sigma$-polynomial  
$G(x)$ over $K$, and with pseudolimit $a$ in some extension.
Let $\Sigma$ be a finite set of $\sigma$-polynomials $H(x)$ 
over $K$. Then there is a pc-sequence
$\{b_\rho\}$ in $K$, equivalent to $\{a_\rho\}$, such that, 
with $\gamma_\rho:=v(a-a_\rho)$:
\begin{enumerate}
\item[$(1)$] $G(b_\rho) \leadsto 0$ and eventually $v(a-b_\rho)=\gamma_\rho$;
\item[$(2)$] if $H\in \Sigma$ and $H \notin K$, then  
$H(b_\rho) \leadsto H(a)$;
\item[$(3)$] Eventually $(G, b_\rho)$ is in $\sigma$-hensel configuration with $\gamma(G,b_\rho)=\gamma_\rho$;
\item[$(4)$] If $G(a)\neq 0$, then $(G,a)$ is in  $\sigma$-hensel configuration, and 
$\gamma(G,a) > \gamma_\rho$, eventually.
\end{enumerate} 
\end{lemma}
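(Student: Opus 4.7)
The plan is to build $\{b_\rho\}$ using the regularity construction from the proof of Theorem~\ref{adjustment1} and then read off the $\sigma$-hensel configuration from the resulting Taylor analysis. Since $G$ is the minimal $\sigma$-polynomial of $\{a_\rho\}$, I first replace $\{a_\rho\}$ by an equivalent pc-sequence in $K$ so that $G(a_\rho)\leadsto 0$. Put $\Sigma' := \Sigma\cup\{G_{(\j)} : \j\neq 0,\ G_{(\j)}\neq 0\}$, and for each nonconstant $H\in\Sigma'\cup\{G\}$ define $H^{\ast}(x) := H(a+x)-H(a)$, a $\sigma$-polynomial over $K\langle a\rangle$ with zero constant term. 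Using Lemma~\ref{regforpc} applied in the extension $\ca{K}\langle a\rangle$, together with Remark~\ref{regremark}, pick $c_\rho\in K$ of valuation $\gamma_\rho$ that is simultaneously regular for every $H^{\ast}$, and set $b_\rho := a_{\rho+1}+c_\rho$. Exactly as in the proof of Theorem~\ref{adjustment1}, $\{b_\rho\}$ is a pc-sequence in $K$ equivalent to $\{a_\rho\}$ with $v(a-b_\rho)=\gamma_\rho$ eventually, and since $(b_\rho-a)-c_\rho = -(a-a_{\rho+1})$ has valuation $\gamma_{\rho+1}>\gamma_\rho=v(c_\rho)$, Lemma~\ref{reg2} yields that $b_\rho-a$ is eventually regular for every $H^{\ast}$. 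This gives $H(b_\rho)\leadsto H(a)$ for each nonconstant $H\in\Sigma'$, settling (2) and the valuation statement in (1).

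Minimality of $G$ now does double duty. For each $\j\neq 0$ with $G_{(\j)}\neq 0$, the polynomial $G_{(\j)}$ has strictly lower complexity than $G$, so $G_{(\j)}(b_\rho)\not\leadsto 0$ for any $\{b_\rho\}$ equivalent to $\{a_\rho\}$; combined with $G_{(\j)}(b_\rho)\leadsto G_{(\j)}(a)$ this forces $G_{(\j)}(a)\neq 0$ and $v(G_{(\j)}(b_\rho)) = v(G_{(\j)}(a))$ eventually. The regularity of $b_\rho-a$ for $G^{\ast}$ then gives
$$v\bigl(G(b_\rho)-G(a)\bigr) = G^{\ast}_{v}(\gamma_\rho) = \min_{|\i|\geq 1}\{v(G_{(\i)}(a)) + \bsigma^{\i}(\gamma_\rho)\}$$
eventually. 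For each fixed pair $(\j,\l)$ with $\j,\l\neq 0$ and both $G_{(\j)}, G_{(\j+\l)}$ nonzero, the inequality $\bsigma^{\l}(\gamma_\rho) > v(G_{(\j+\l)}(a)) - v(G_{(\j)}(a))$ holds eventually, since $\bsigma^{\l}$ is strictly increasing on the positive cone of $\Gamma$ and $\gamma_\rho$ is strictly increasing. Only finitely many such pairs occur, so these inequalities hold simultaneously for large $\rho$, delivering condition (ii) of $\sigma$-hensel configuration at $(G,b_\rho)$ with $\gamma=\gamma_\rho$ and forcing the minimum in the display to be attained at some $|\i|=1$.

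To finish (1), (3) and (4) I case-split on $G(a)$. If $G(a)=0$, then $v(G(b_\rho)) = G^{\ast}_{v}(\gamma_\rho) = \min_{|\j|=1}v(G_{(\j)}(a))+\bsigma^{\j}(\gamma_\rho)$ eventually, strictly increasing, yielding $G(b_\rho)\leadsto 0$ and condition (i); (4) is vacuous. If $G(a)\neq 0$, then compatibility of $G(b_\rho)\leadsto 0$ with the displayed Taylor identity forces $G^{\ast}_{v}(\gamma_\rho)<v(G(a))$ for every $\rho$, so once again $v(G(b_\rho)) = G^{\ast}_{v}(\gamma_\rho)$ and (3) follows as before. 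For (4), determine $\gamma = \gamma(G,a)$ in the value group of the ambient extension by solving $v(G(a)) = \min_{|\j|=1}v(G_{(\j)}(a))+\bsigma^{\j}\gamma$; the right-hand side is piecewise-linear and strictly increasing in $\gamma$, and equals $G^{\ast}_{v}(\gamma_\rho)<v(G(a))$ at $\gamma = \gamma_\rho$, so $\gamma>\gamma_\rho$ for every $\rho$. Condition (ii) at $(G,a)$ is then the same asymptotic inequality verified above, now applied with $\gamma$ in place of $\gamma_\rho$.

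The main obstacle is obtaining the exact equality $v\bigl(G(b_\rho)-G(a)\bigr) = G^{\ast}_{v}(\gamma_\rho)$ rather than merely the lower bound $\geq$; this is precisely what the careful choice of a regular $c_\rho$ via Lemma~\ref{regforpc}, together with the transfer of regularity from $c_\rho$ to $b_\rho-a$ via Lemma~\ref{reg2}, is designed to prevent the valuation-raising cancellations in the linear $|\i|=1$ part of the Taylor expansion that would otherwise spoil condition (i) of the $\sigma$-hensel configuration.
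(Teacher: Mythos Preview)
Your overall strategy mirrors the paper's: enlarge $\Sigma$ to include the $G_{(\j)}$, build $\{b_\rho\}$ via the regularity construction of Theorem~\ref{adjustment1}, and read off the $\sigma$-hensel configuration from the Taylor expansion. The setup is right, but there is a genuine gap in your verification of condition~(ii).

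You assert that $\bsigma^{\l}(\gamma_\rho) > v(G_{(\j+\l)}(a)) - v(G_{(\j)}(a))$ holds eventually ``since $\bsigma^{\l}$ is strictly increasing on the positive cone of $\Gamma$ and $\gamma_\rho$ is strictly increasing.'' This does not follow: a strictly increasing sequence in an ordered abelian group need not eventually exceed a given fixed element (take $\Gamma=\mathbb{Q}$, $\sigma=\mathrm{id}$, $\gamma_\rho=1-1/\rho$, and right-hand side $5$), and there is no reason $\gamma_\rho$ lies in the positive cone at all. The correct argument---and this is exactly why you put the $G_{(\j)}$ into $\Sigma'$---is to \emph{use} the regularity of $b_\rho-a$ for $(G_{(\j)})^{\ast}$: it yields
\[
v\bigl(G_{(\j)}(b_\rho)-G_{(\j)}(a)\bigr)\ =\ \min_{|\l|\ge 1}\bigl\{v(G_{(\j+\l)}(a))+\bsigma^{\l}(\gamma_\rho)\bigr\},
\]
and since $v(G_{(\j)}(b_\rho))=v(G_{(\j)}(a))$ eventually, the left side is $\ge v(G_{(\j)}(a))$. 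Hence $v(G_{(\j)}(a))\le v(G_{(\j+\l)}(a))+\bsigma^{\l}(\gamma_\rho)$ for each $\l\ne 0$, and strictness follows by passing to a larger index. This is precisely how the paper proceeds; you had the tool in hand but invoked monotonicity instead. The same gap propagates to your argument for~(4), which rests on ``the same asymptotic inequality verified above.''

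A smaller issue: in the case $G(a)\ne 0$ you derive $G^{\ast}_v(\gamma_\rho)<v(G(a))$ from ``compatibility of $G(b_\rho)\leadsto 0$,'' which is what you are proving. The non-circular route uses your initial replacement with $G(a_\rho)\leadsto 0$: if $G^{\ast}_v(\gamma_{\rho_0})\ge v(G(a))$ for some $\rho_0$, then for all $\rho>\rho_0$ one has $v(G(a_\rho)-G(a))\ge G^{\ast}_v(\gamma_\rho)>v(G(a))$, so $v(G(a_\rho))=v(G(a))$ is eventually constant, contradicting $G(a_\rho)\leadsto 0$.
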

\begin{proof}
Let $G$ have order $n$. We can assume that $\Sigma$ includes
all $G_{(\i)}$.
In the rest of the proof $\i,\j,\l$ range over $\mathbb{N}^{n+1}$.
Since Axiom 2 implies Axiom 1, Theorem~\ref{crucial.result.nonwitt} and its proof
yield an equivalent pc-sequence 
$\{b_\rho\}$ in $K$ such that (1) and (2) hold. The proof of
Theorem~\ref{adjustment1} 
shows we can arrange in addition that
there is $\i \in \mathbb{N}^{n+1}$ such that, eventually,
$$v\big(G(b_\rho)-G(a)\big)= v\big(G_{(\i)}(a)\big)
+\sigma^{\i}\gamma_\rho \leq v\big(G_{(\j)}(a)\big)+\sigma^{\j}\gamma_\rho,$$
for each $\j \neq \i$. 
Now $\left\{v\big(G(b_\rho)\big)\right\}$ is strictly increasing, 
eventually, so $v\big(G(a)\big) > v\big(G(b_\rho)\big)$ eventually, 
and so for $\j \neq \i$: 
$$v\big(G(b_\rho)\big)= v\big(G_{(\i)}(a)\big)
+\sigma^{\i}\gamma_\rho \leq v\big(G_{(\j)}(a)\big)+\sigma^{\j}\gamma_\rho,
\quad \text{eventually}.$$  
We claim that $|\i|=1$. Let $|\j|=1$ with $G_{(\i)}\ne 0$, and let $\k>\j$;
our claim will then follow by deriving 
$$v\big(G_{(\j)}(a)\big) + \sigma^{\j}\gamma_\rho < v\big(G_{(\k)}(a)\big) + 
\sigma^{\k}\gamma_\rho, \quad \text{eventually}.$$ The proof of
Theorem~\ref{adjustment1} with $G_{(\j)}$ in the role of $G$ shows that 
we can arrange that our sequence $\{b_\rho\}$ also satisfies
$$v\big(G_{(\j)}(b_\rho)-G_{(\j)}(a)\big) \le  v\big(G_{(\j)(\l)}(a)\big)
+\sigma^{\l}\gamma_\rho, \quad\text{eventually}$$
for all $\l$ with $|\l|\ge 1$. Since 
$v\big(G_{(\j)}(b_\rho)\big)=v\big(G_{(\j)}(a)\big)$ eventually, this yields
$$v\big(G_{(\j)}(a)\big)\leq v\big(G_{(\j)(\l)}(a)\big)
+\sigma^{\l}\gamma_\rho, \quad\text{eventually}$$
for all $\l$ with $|\l|\ge 1$, hence for all such $\l$,
$$v\big(G_{(\j)}(a)\big)\leq
 v{{\j+ \l}\choose \i}
+v\big(G_{(\j+ \l)}(a)\big)+\sigma^{\l}\gamma_\rho, \quad\text{eventually} $$
For $\l$ with $\j+\l=\k$, this yields
$$v\big(G_{(\j)}(a))\leq v{{\k\choose \j}}
+v\big(G_{(\k)}(a)\big)+\sigma^{\k-\j}\gamma_\rho, \quad\text{eventually}.$$
Since $\cha(\bk)=0$,
$$v\big(G_{(\j)}(a))\leq v\big(G_{(\k)}(a)\big)+\sigma^{\k-\j}\gamma_\rho,  \quad\text{eventually, hence}  $$
$$v\big(G_{(\j)}(a)\big)+\sigma^{\j}\gamma_\rho<v(G_{(\k)}(a))+
\sigma^{\k}\gamma_\rho, \quad\text{eventually}. $$
Thus $|\i|=1$ as claimed. Then we obtain (3) and (4) by the above inequalities together with the fact that $G$ is a minimal
$\sigma$-polynomial of $b_\rho$. 
\end{proof}

\section{Immediate Extensions}\label{max.imm.ext}

\noindent
Throughout this section
$\ca{K}=(K,\Gamma,\bk; v,\pi)$ is a valued difference field satisfying Axiom 2. 
Note that immediate 
extensions of $\ca{K}$ also satisfy Axiom 2. We let $K$ stand for $\ca{K}$ 
when the meaning is clear from the context. The results of this section contain precisely the same conclusions as in the results of the corresponding sections of ~\cite{BMS}, ~\cite{ad} and ~\cite{vdca}. Having proved Lemmas~\ref{newton2} and ~\ref{henselconf} for the general context of this paper, proofs in ~\cite{BMS}, ~\cite{ad} and ~\cite{vdca} remain intact. Therefore we present the results without proof.

\begin{definition}
A pc-sequence $\{a_\rho\}$ from $K$ is said to be of 
{\em $\sigma$-transcendental type over $K$\/}
if it is not of $\sigma$-algebraic type over $K$, that is, 
$G(b_\rho) \not\leadsto 0$ for each $\sigma$-polynomial $G(x)$
over $K$ and each equivalent pc-sequence $\{b_\rho\}$ from $K$. 
\end{definition}

\noindent
In particular, such a pc-sequence cannot have a pseudolimit in $K$.
For the proofs of next two lemmas see \cite{BMS} or \cite{vdca}. 

\begin{lemma}\label{ext.with.pseudolimit} 
Let $\{a_\rho\}$ from $K$ be a pc-sequence of $\sigma$-transcendental type
over $K$.  Then  $\ca{K}$ has an
immediate extension $(K\langle a \rangle, \Gamma,\bk; v_a, \pi_a)$
such that: \begin{enumerate} 
\item[$(1)$] $a$ is $\sigma$-transcendental over $K$ and 
$a_\rho \leadsto a$;
\item[$(2)$] for any extension $(K_1,\Gamma_1,\bk_1;v_1,\pi_1)$ of 
$\ca{K}$ and any $b\in K_1$ with
$a_\rho \leadsto b$ there is a unique
embedding 
$$(K\langle a \rangle, \Gamma,\bk; v_a, \pi_a)\ \longrightarrow\ 
(K_1,\Gamma_1,\bk_1;v_1,\pi_1)$$ 
over  $\ca{K}$ that sends $a$ to $b$. 
\end{enumerate} 
\end{lemma}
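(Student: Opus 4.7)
The plan is to imitate the classical Kaplansky construction that produces an immediate extension by a pseudolimit of a transcendental pc-sequence, with Theorem~\ref{adjustment1} / Corollary~\ref{corpc} supplying the substitute for the fact that ordinary polynomials preserve pseudo-convergence. First I would pick a pseudolimit $a$ of $\{a_\rho\}$ in some extension of $\ca{K}$ (such an $a$ exists by the remark at the end of Section~\ref{prelim}). The first thing to verify is that $a$ is $\sigma$-transcendental over $K$: if some nonzero $\sigma$-polynomial $F$ over $K$ had $F(a)=0$, then by Theorem~\ref{adjustment1} there is a pc-sequence $\{b_\rho\}$ in $K$ equivalent to $\{a_\rho\}$ with $F(b_\rho)\leadsto F(a)=0$, contradicting $\sigma$-transcendence of type. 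In particular $G(a)\neq 0$ for every nonzero $\sigma$-polynomial $G$ over $K$, so $G\mapsto G(a)$ realises the underlying difference field $K\langle a\rangle$ of the free difference ring extension inside the fraction field.

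Next I would define the valuation $v_a$ on $K\langle a\rangle$. For a nonzero $\sigma$-polynomial $G$ over $K$, use Theorem~\ref{adjustment1} to pick a pc-sequence $\{b_\rho\}$ in $K$, equivalent to $\{a_\rho\}$, with $G(b_\rho)\leadsto G(a)$. Since $G(a)\neq 0$ and $v(G(a)-G(b_\rho))$ is eventually strictly increasing, $v(G(b_\rho))$ is eventually constant, equal to some $\gamma_G\in\Gamma$; set $v_a(G(a)):=\gamma_G$. The key thing to check is that $\gamma_G$ is independent of the choice of $\{b_\rho\}$: if $\{b'_\rho\}$ is another such sequence, equivalence and the ultrametric inequality give eventually $v(G(b_\rho)-G(b'_\rho))>\gamma_G$, forcing the two eventual valuations to agree. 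From the same description one gets multiplicativity $v_a(G_1G_2(a))=v_a(G_1(a))+v_a(G_2(a))$ and the ultrametric inequality by passing them through the chosen pc-sequences, and one extends $v_a$ uniquely to the field of fractions $K\langle a\rangle$.

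Then I would verify the extension is immediate. Value-group part: for nonzero $G$ we have $v_a(G(a))=v(G(b_\rho))$ eventually, which lies in $\Gamma$; for quotients we take differences. Residue-field part: given nonzero $y=G(a)/H(a)\in K\langle a\rangle$ with $v_a(y)=0$, pick an equivalent $\{b_\rho\}$ good for both $G$ and $H$ (Remark~\ref{regremark}); then $G(b_\rho)/H(b_\rho)\in K$ has the same valuation and the same residue class as $y$ eventually, since $v_a(y - G(b_\rho)/H(b_\rho))$ is eventually strictly increasing by construction. That $a_\rho\leadsto a$ in the $v_a$-sense is built in: applied to the $\sigma$-polynomial $x-a_{\rho_0}$ the construction gives $v_a(a-a_{\rho_0})$ as the eventual value of $v(b_\rho-a_{\rho_0})$, which is the $\gamma_\rho$ of the pc-sequence.

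For (2), given an extension $\ca{K}_1$ and $b\in K_1$ with $a_\rho\leadsto b$, the same argument as in the first paragraph shows $b$ is $\sigma$-transcendental over $K$, so the map $G(a)\mapsto G(b)$ is a well-defined isomorphism of difference fields $K\langle a\rangle\to K\langle b\rangle$ over $K$ sending $a\mapsto b$, and it extends uniquely to the fraction fields. To see it is also an isomorphism of \emph{valued} difference fields, use the same equivalent pc-sequence $\{b_\rho\}$ for both: by Theorem~\ref{adjustment1} applied in $\ca{K}_1$ we have $G(b_\rho)\leadsto G(b)$ as well, so the eventual value $v(G(b_\rho))$ in $\Gamma$ equals both $v_a(G(a))$ and $v_1(G(b))$. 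Uniqueness of the embedding is forced by $a\mapsto b$ together with the definition of $K\langle a\rangle$ as the smallest valued difference subfield containing $a$.

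The main obstacle I expect is verifying that $v_a$ is well-defined and is a valuation: this is where one must exploit the fact that \emph{equivalent} pc-sequences give the same eventual valuation of $G(\cdot)$, and where the freedom provided by Theorem~\ref{adjustment1} (choosing the equivalent pc-sequence to be compatible with a prescribed finite set of $\sigma$-polynomials, via Remark~\ref{regremark}) is essential in order to handle products, sums, and quotients simultaneously.
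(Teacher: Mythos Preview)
The paper does not actually prove this lemma; it simply states ``For the proofs of next two lemmas see \cite{BMS} or \cite{vdca}.'' Your sketch is correct and is essentially the standard Kaplansky-style argument one finds in those references, with Theorem~\ref{adjustment1} doing the work of ensuring $\sigma$-polynomials respect pseudoconvergence after passing to an equivalent sequence.

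One minor presentational point: once you have chosen $a$ as a pseudolimit in some valued difference field extension of $\ca{K}$, the valuation on $K\langle a\rangle$ is already given by restriction, so your ``definition'' $v_a(G(a)):=\gamma_G$ is really a computation showing that the existing $v(G(a))$ lies in $\Gamma$ (because $G(b_\rho)\leadsto G(a)$ with $G(a)\neq 0$ forces $v(G(b_\rho))=v(G(a))$ eventually). This makes the well-definedness check you flag as the ``main obstacle'' automatic, and likewise collapses the verification that $v_a$ is a valuation. The alternative route---taking $a$ as a formal $\sigma$-transcendental and building $v_a$ from scratch via eventual values---genuinely needs the independence-of-choice argument you outline, and that is closer to how \cite{BMS} presents it; but since you already invoked the elementary-extension trick to produce $a$, you may as well exploit it fully and shorten the argument.
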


\begin{lemma}\label{imm.alg.ext} 
Let $\{a_\rho\}$ from $K$ be a pc-sequence of $\sigma$-algebraic type over
$K$, with no pseudolimit in $K$. Let $G(x)$ be a minimal 
$\sigma$-polynomial of $\{a_\rho\}$ over $K$. Then $\ca{K}$ 
has an immediate extension 
$(K\langle a \rangle, \Gamma,\bk; v_a, \pi_a)$ such that
\begin{enumerate}
\item[$(1)$] $G(a)=0$ and $a_\rho \leadsto a$;
\item[$(2)$] for any extension $(K_1, \Gamma_1,\bk_1; v_1, \pi_1)$ of 
$\ca{K}$ and any $b\in K_1$ with $G(b)=0$ and
$a_\rho \leadsto b$ there is a unique
embedding 
$$(K\langle a \rangle, \Gamma,\bk; v_a, \pi_a) \longrightarrow\ 
(K_1, \Gamma_1,\bk_1; v_1, \pi_1)$$ 
over  $\ca{K}$ that sends $a$ to $b$. 
\end{enumerate} 
\end{lemma}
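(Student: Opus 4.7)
The plan is to mimic Kaplansky's classical construction of an immediate algebraic extension by adjoining a pseudolimit, transposed to the $\sigma$-setting via the minimality of $G$ and the pc-sequence technology of Section~\ref{pcandsigma}. First I would construct $K\langle a \rangle$ abstractly as a difference field: let $K\{x\}$ denote the difference polynomial ring in one variable over $K$, and take $a$ to be the image of $x$ in $K\{x\}/\fr{p}$, where $\fr{p}$ is the difference ideal of $\sigma$-polynomials one wishes to vanish on $a$. The minimality of $G$ guarantees that no $\sigma$-polynomial $H$ of complexity strictly less than $G$ can be forced to vanish: any such $H$ would, by Theorem~\ref{crucial.result.nonwitt}, supply an equivalent pc-sequence $\{b_\rho\}$ in $K$ with $H(b_\rho) \leadsto 0$, contradicting minimality. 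Hence $\fr{p}$ is determined by $G$, and every element of $K\langle a \rangle$ has a canonical representative $H(a)/c$ with $H$ a $\sigma$-polynomial of complexity strictly less than $G$ and $c \in K^\times$.

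Next, I would transfer the valuation and residue map via pseudo-convergence. For nonzero $H(a)$ with $H$ of complexity less than $G$, Theorem~\ref{crucial.result.nonwitt} supplies a pc-sequence $\{b_\rho\}$ in $K$, equivalent to $\{a_\rho\}$, with $G(b_\rho) \leadsto 0$ and $H(b_\rho) \leadsto H(a)$; by minimality $H(b_\rho) \not\leadsto 0$, so $v(H(b_\rho))$ is eventually constant in $\Gamma$, and I would define $v_a(H(a))$ to be that eventual value. Independence from the choice of $\{b_\rho\}$ follows from equivalence. Using Remark~\ref{regremark} and Theorem~\ref{crucial.result.nonwitt} together to select a single $\{b_\rho\}$ accommodating any finite list of $\sigma$-polynomials at once, multiplicativity, subadditivity, and compatibility with $\sigma$ reduce to the corresponding properties of $v$ on $K$. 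The residue map $\pi_a$ is defined analogously by reducing suitably normalized $H(b_\rho)$ modulo $\fr{m}$. Since all values so obtained lie in $\Gamma$ and all residues in $\bk$, the resulting $(K\langle a \rangle, \Gamma, \bk; v_a, \pi_a)$ is an immediate extension of $\ca{K}$, $a_\rho \leadsto a$ by construction, and $G(a)=0$ because $G \in \fr{p}$, yielding~(1).

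For the universal property~(2), given any extension $\ca{K}_1$ of $\ca{K}$ and $b \in K_1$ with $G(b)=0$ and $a_\rho \leadsto b$, the difference-ring homomorphism $K\{x\} \to K_1$ sending $x \mapsto b$ kills $G$ (and no $\sigma$-polynomial of smaller complexity, by applying the minimality argument to $\{b_\rho\}$ inside $K_1$), hence factors through a unique difference-field embedding $\phi \colon K\langle a \rangle \to K_1$ with $\phi(a)=b$. That $\phi$ preserves the valuation follows because for $H$ of complexity less than $G$, Theorem~\ref{adjustment1} applied inside $K_1$ to the same pc-sequence $\{b_\rho\}$ yields $H(b_\rho) \leadsto H(b)$, so $v_1(H(b))$ equals the eventual value of $v(H(b_\rho))=v_a(H(a))$.

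The main obstacle I expect is verifying that $v_a$ really is a valuation: a product $H_1(a)H_2(a)$ may need to be rewritten modulo $\fr{p}$ before its eventual value at $b_\rho$ can be read off, and one must rule out hidden cancellation introduced by that rewrite. This is precisely where the minimality of $G$ earns its keep: any unexpected cancellation would produce a $\sigma$-polynomial of complexity strictly less than $G$ pseudo-converging to zero along an equivalent pc-sequence, contradicting the minimality assumption on $G$.
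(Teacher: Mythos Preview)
The paper gives no proof here, deferring entirely to \cite{BMS} and \cite{vdca}; your sketch is precisely the Kaplansky-style argument those references carry out, so in spirit you are doing what the paper does.

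Two points deserve tightening. First, the claim that every element of $K\langle a\rangle$ has the form $H(a)/c$ with $c\in K^\times$ is false: if $G$ has order $n\ge 1$ then $a,\sigma(a),\dots,\sigma^{n-1}(a)$ are algebraically independent over $K$ (any algebraic relation would yield a $\sigma$-polynomial of order $<n$, hence of complexity below $G$, vanishing at $a$), so $K\{a\}$ is not a field and already $1/a$ fails to have your shape. The fix is harmless---work with quotients $H_1(a)/H_2(a)$ where both $H_i$ have complexity below $G$, define $v_a$ on numerators via eventual values as you do, and extend to the fraction field---but the statement as written is wrong. Second, ``$\fr p$ is determined by $G$'' is too quick. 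The difference ideal generated by $G$ need not be prime, and it is the pc-sequence that singles out the correct minimal prime above it: for each $k$, the relation $\sigma^k(G)(\sigma^k(a),\dots,\sigma^{n+k}(a))=0$ leaves several algebraic possibilities for $\sigma^{n+k}(a)$, and it is the pseudoconvergence $\sigma^{n+k}(a_\rho)\leadsto\sigma^{n+k}(a)$ that selects one. Your argument for the universal property in (2) implicitly supplies exactly this identification, but the construction in (1) should acknowledge that $\fr p$ depends on $\{a_\rho\}$ and not on $G$ alone.
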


\noindent
We note the following consequences of 
Lemmas~\ref{ext.with.pseudolimit} and ~\ref{imm.alg.ext}:

\begin{corollary}\label{ctra} Let $a$ from some extension of
$\ca{K}$ be $\sigma$-algebraic over $K$ and let 
$\{a_\rho\}$ be a  pc-sequence in $K$ such that $a_\rho \leadsto a$.
Then $\{a_\rho\}$ is of $\sigma$-algebraic type over $K$.
\end{corollary}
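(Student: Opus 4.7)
The plan is to argue by contradiction, leveraging the universal property in Lemma~\ref{ext.with.pseudolimit}. Suppose for contradiction that $\{a_\rho\}$ is of $\sigma$-transcendental type over $K$. Then Lemma~\ref{ext.with.pseudolimit} yields an immediate extension $(K\langle a'\rangle,\Gamma,\bk;v_{a'},\pi_{a'})$ of $\ca{K}$ in which $a'$ is $\sigma$-transcendental over $K$ and $a_\rho \leadsto a'$.

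Next I would apply part $(2)$ of Lemma~\ref{ext.with.pseudolimit} to the given extension $\ca{K}'$ of $\ca{K}$ containing $a$: since $a_\rho \leadsto a$ in $\ca{K}'$, there exists a (unique) embedding $K\langle a'\rangle \to K'$ over $\ca{K}$ sending $a'$ to $a$. Because this embedding is a morphism of valued difference fields over $K$, it preserves all $\sigma$-polynomial identities with coefficients in $K$; in particular, $F(a)=0$ in $K'$ would force $F(a')=0$ in $K\langle a'\rangle$ for any $\sigma$-polynomial $F(x)$ over $K$.

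Since $a$ is $\sigma$-algebraic over $K$ by hypothesis, there is some nonzero $F(x)$ over $K$ with $F(a)=0$, hence $F(a')=0$, contradicting the $\sigma$-transcendence of $a'$ over $K$. Therefore $\{a_\rho\}$ must be of $\sigma$-algebraic type over $K$. The only delicate point, which I do not expect to cause trouble, is confirming that the embedding provided by the universal property is indeed a morphism of difference rings (not merely of valued fields), so that $\sigma$-polynomial relations transfer; this is built into the statement of Lemma~\ref{ext.with.pseudolimit}.
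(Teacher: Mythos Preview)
Your argument is correct and is exactly the intended one: the paper does not spell out a proof but simply records Corollary~\ref{ctra} as a consequence of Lemma~\ref{ext.with.pseudolimit} (and Lemma~\ref{imm.alg.ext}, which is not actually needed here), and your contradiction via the universal property of the $\sigma$-transcendental immediate extension is precisely how that deduction goes. Your remark that the embedding in Lemma~\ref{ext.with.pseudolimit} is one of valued \emph{difference} fields is also right, since in this paper ``extension'' and ``embedding'' of $\ca{K}$ always refer to valued difference field structures.
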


\begin{corollary} \label{immediate.sigma-immediate}
Suppose $\ca{K}$ is finitely ramified. Then $\ca{K}$ as a valued field has 
a proper immediate extension if and only if
$\ca{K}$ as a valued difference field has a proper immediate extension.
\end{corollary}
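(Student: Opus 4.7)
The plan is to reduce the statement to the classical Kaplansky characterization of maximally complete valued fields, and then invoke the two immediate-extension lemmas that have just been stated. One direction is essentially free: if $\ca{K}'$ is a proper immediate extension of $\ca{K}$ as a valued difference field, then by definition $\ca{K}'$ is a valued field extension that is immediate, and since $\ca{K}' \ne \ca{K}$ on the difference-field level we have $K' \ne K$, so $\ca{K}'$ is a proper immediate valued field extension of $\ca{K}$.

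For the nontrivial direction, I would start from the classical fact that $\ca{K}$ admits a proper immediate valued field extension if and only if there exists a pc-sequence $\{a_\rho\}$ in $K$ with no pseudolimit in $K$. (In residue characteristic zero the ``finitely ramified'' hypothesis is automatic, so this equivalence applies unconditionally.) Fix such a pc-sequence and distinguish two cases according to its $\sigma$-type over $K$. If $\{a_\rho\}$ is of $\sigma$-transcendental type over $K$, apply Lemma~\ref{ext.with.pseudolimit} to produce an immediate valued difference field extension $(K\langle a\rangle,\Gamma,\bk;v_a,\pi_a)$ in which $a_\rho \leadsto a$. If instead $\{a_\rho\}$ is of $\sigma$-algebraic type over $K$, let $G(x)$ be a minimal $\sigma$-polynomial of $\{a_\rho\}$ over $K$ and apply Lemma~\ref{imm.alg.ext} to obtain an immediate valued difference field extension $(K\langle a\rangle,\Gamma,\bk;v_a,\pi_a)$ with $G(a)=0$ and $a_\rho\leadsto a$.

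In both cases the resulting extension is proper: since $\{a_\rho\}$ has no pseudolimit in $K$ by choice, the element $a$ cannot lie in $K$, so $K\langle a\rangle \supsetneq K$. This gives a proper immediate extension of $\ca{K}$ as a valued difference field, completing the proof.

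There is essentially no obstacle beyond citing the correct ingredients: the two lemmas on immediate extensions by pc-sequences of $\sigma$-transcendental and $\sigma$-algebraic type, together with Kaplansky's dichotomy, already handle all the algebraic work. The only thing that needed to be verified here is that the ``proper'' part survives the construction, which it does simply because a pc-sequence with no pseudolimit in $K$ cannot pseudoconverge to any element of $K$ in the new extension either.
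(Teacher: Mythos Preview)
Your proof is correct and is exactly the argument the paper has in mind: the corollary is stated without proof as a direct consequence of Lemmas~\ref{ext.with.pseudolimit} and~\ref{imm.alg.ext}, and your reduction via Kaplansky's characterization together with the $\sigma$-transcendental/$\sigma$-algebraic dichotomy spells this out precisely. Your observation that in residue characteristic zero the finite-ramification hypothesis is automatic is also apt.
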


\noindent
We say that $\ca{K}$ is 
{\em $\sigma$-algebraically maximal\/} if it
has no proper immediate $\sigma$-algebraic extension, and we say it is
{\em maximal\/} if it has no proper immediate extension. 
Corollary~\ref{ctra} and Lemmas~\ref{imm.alg.ext} and ~\ref{hensel.imm} yield:

\begin{corollary}\label{crucialhens}
\begin{enumerate}
\item[$(1)$] $\ca{K}$ is $\sigma$-algebraically maximal 
if and only if each pc-sequence in $K$ of $\sigma$-algebraic type over $K$ 
has a pseudolimit in $K$;
\item[$(2)$] if $\ca{K}$ satisfies Axiom $2$ and is
$\sigma$-algebraically maximal, then $\ca{K}$ is 
$\sigma$-henselian.
\end{enumerate}
\end{corollary}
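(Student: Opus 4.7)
\medskip
\noindent
\textbf{Proof plan for Corollary~\ref{crucialhens}.}

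For part~$(1)$, the plan is to prove both implications by contrapositive, making essential use of Lemma~\ref{imm.alg.ext} and Corollary~\ref{ctra}. For the forward direction, suppose there exists a pc-sequence $\{a_\rho\}$ in $K$ of $\sigma$-algebraic type over $K$ without a pseudolimit in $K$. Let $G(x)$ be a minimal $\sigma$-polynomial of $\{a_\rho\}$ over $K$. Lemma~\ref{imm.alg.ext} then produces an immediate extension $K\langle a\rangle$ with $G(a)=0$; since $G\notin K$ (any constant would have $G(b_\rho)\not\leadsto 0$) and $a_\rho\leadsto a$ forces $a\notin K$, this extension is a proper immediate $\sigma$-algebraic extension, so $\ca{K}$ is not $\sigma$-algebraically maximal. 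Conversely, suppose $\ca{K}$ admits a proper immediate $\sigma$-algebraic extension $\ca{K}'$; pick $a\in K'\setminus K$. A standard valued-field fact (any element of an immediate extension that is not in the base is the pseudolimit of a pc-sequence from the base without a pseudolimit in the base) yields a pc-sequence $\{a_\rho\}$ from $K$ with $a_\rho\leadsto a$ and no pseudolimit in $K$. Because $a$ is $\sigma$-algebraic over $K$, Corollary~\ref{ctra} forces $\{a_\rho\}$ to be of $\sigma$-algebraic type over $K$, contradicting the hypothesis of the backward direction.

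For part~$(2)$, the plan is a one-step reduction to Lemma~\ref{hensel.imm} via part~$(1)$. Assume $\ca{K}$ satisfies Axiom~2 and is $\sigma$-algebraically maximal; let $(G,a)$ be in $\sigma$-hensel configuration. I would argue by contradiction, assuming that no $b\in K$ satisfies $v(b-a)=\gamma(G,a)$ and $G(b)=0$. Lemma~\ref{hensel.imm} then supplies a pc-sequence $\{a_\rho\}$ in $K$ with no pseudolimit in $K$ and with $G(a_\rho)\leadsto 0$. In particular (taking $\{b_\rho\}=\{a_\rho\}$), $\{a_\rho\}$ is of $\sigma$-algebraic type over $K$. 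Applying part~$(1)$, the hypothesis of $\sigma$-algebraic maximality forces $\{a_\rho\}$ to have a pseudolimit in $K$, contradicting clause~$(1)$ of Lemma~\ref{hensel.imm}. Thus the desired $b$ exists and $\ca{K}$ is $\sigma$-henselian by Definition~\ref{shensel2}.

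The only genuine subtlety is bookkeeping in part~$(1)$: one must verify that the element produced by Lemma~\ref{imm.alg.ext} genuinely enlarges $K$ (so that the immediate extension is proper), and conversely that the pc-sequence one extracts from a proper immediate $\sigma$-algebraic extension really has no pseudolimit in $K$. Both are routine consequences of immediacy, but they are the points at which one has to be careful not to confuse ``equivalent'' pc-sequences with ``identical'' ones when invoking Corollary~\ref{ctra}. The proof of part~$(2)$ is then a purely formal composition of part~$(1)$ with Lemma~\ref{hensel.imm}, and needs no further ideas.
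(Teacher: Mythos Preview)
Your proposal is correct and follows essentially the same route as the paper, which simply states that the corollary is yielded by Corollary~\ref{ctra} and Lemmas~\ref{imm.alg.ext} and~\ref{hensel.imm} without spelling out the details. Your write-up is exactly the intended unpacking of that sentence: part~(1) from Lemma~\ref{imm.alg.ext} (forward) and Corollary~\ref{ctra} plus the standard immediate-extension fact (backward), and part~(2) by contradiction via Lemma~\ref{hensel.imm} and part~(1).
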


\noindent
It is clear that $\ca{K}$ has
$\sigma$-algebraically maximal immediate $\sigma$-algebraic extensions, and 
also maximal immediate extensions. Using the next lemma, we will show that if $\ca{K}$ satisfies Axiom 2 both kinds of extensions are unique up to isomorphism.

\begin{lemma} Suppose $\ca{K}$ satisfies Axiom 1 and
$\ca{K}'$ satisfies Axiom 2.
Let $\ca{K}'$ be $\sigma$-algebraically maximal
extension of $\ca{K}$ and
$\{a_\rho\}$ from $K$ be a pc-sequence of $\sigma$-algebraic type over
$K$, with no pseudolimit in $K$, and with minimal 
$\sigma$-polynomial $G(x)$ over $K$. Then there exists $b\in K'$
such that $a_\rho \leadsto b$ and $G(b)=0$.
\end{lemma}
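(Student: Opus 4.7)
The plan is to first produce a pseudolimit of $\{a_\rho\}$ inside $K'$ via the $\sigma$-algebraic maximality of $\ca{K}'$, and then to use $\sigma$-henselianity of $\ca{K}'$ to shift that pseudolimit (if necessary) to a genuine zero of $G$ that is still a pseudolimit of the sequence.

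I would begin by noting that Corollary~\ref{crucialhens}(2), applied to $\ca{K}'$, gives that $\ca{K}'$ is $\sigma$-henselian. Since $\{a_\rho\}$ is of $\sigma$-algebraic type over $K$ and every $\sigma$-polynomial over $K$ is a $\sigma$-polynomial over $K'$, the sequence is also of $\sigma$-algebraic type over $K'$. Hence Corollary~\ref{crucialhens}(1) applied to $\ca{K}'$ yields a pseudolimit $c \in K'$ of $\{a_\rho\}$; set $\gamma_\rho := v(c - a_\rho)$, an eventually strictly increasing sequence in $\Gamma'$. If $G(c) = 0$ we simply take $b := c$, so from now on assume $G(c) \neq 0$.

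Next I would apply Lemma~\ref{henselconf} to $\{a_\rho\}$ in $\ca{K}$, with minimal $\sigma$-polynomial $G$ and pseudolimit $c \in K'$. Although that lemma is stated under Axiom~2, inspection of its proof shows that Axiom~2 is invoked only to extract Axiom~1 (the rest being Theorems~\ref{crucial.result.nonwitt} and~\ref{adjustment1}, the combinatorics of the $\sigma$-hensel configuration inequalities, and $\cha \bk = 0$); thus the lemma applies to our $\ca{K}$. Conclusion~(4) then tells us that $(G, c)$ is in $\sigma$-hensel configuration with $\gamma(G, c) > \gamma_\rho$ eventually. By the $\sigma$-henselianity of $\ca{K}'$ we obtain $b \in K'$ with $v(b - c) = \gamma(G, c)$ and $G(b) = 0$.

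Finally, I would verify $a_\rho \leadsto b$. Since $v(b - c) = \gamma(G, c) > \gamma_\rho$ for all sufficiently large $\rho$, the value $v(b - c)$ lies in the width of $\{a_\rho\}$. Combined with $a_\rho \leadsto c$ and the observation following Definition~\ref{pcs}, this gives $a_\rho \leadsto b$, as required. The only delicate point is the use of Lemma~\ref{henselconf} when $\ca{K}$ satisfies merely Axiom~1; once this is resolved by examining the proof, everything else is a direct application of $\sigma$-henselianity of $\ca{K}'$.
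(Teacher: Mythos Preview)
Your proof is correct and follows essentially the same route as the paper's: obtain a pseudolimit in $K'$ via $\sigma$-algebraic maximality, invoke Lemma~\ref{henselconf}(4) to put $(G,c)$ in $\sigma$-hensel configuration with $\gamma(G,c)>\gamma_\rho$, and then use $\sigma$-henselianity of $\ca{K}'$ (from Corollary~\ref{crucialhens}(2)) to find the desired root $b$. You are in fact more careful than the paper on one point: you explicitly observe that Lemma~\ref{henselconf} is stated under Axiom~2 but its proof only uses Axiom~1, which is exactly what is available for $\ca{K}$ here; the paper applies the lemma without flagging this.
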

\begin{proof} Lemma~\ref{imm.alg.ext} provides a pseudolimit $a\in K'$ of
$\{a_\rho\}$. Take a pc-sequence $\{b_\rho\}$ in $K$ 
equivalent to $\{a_\rho\}$ with the properties listed in 
Lemma~\ref{henselconf}. 
Since $\ca{K}'$ satisfies Axiom 2, it is $\sigma$-henselian 
and hence there is $b\in K'$ such that
$$ v(a-b)=\gamma(G,a)\ \text{  and }\ G(b)=0.$$ 
Note that $a_\rho \leadsto b$ since
$\gamma(G,a) > v(a-a_\rho)=\gamma_\rho$ eventually.
\end{proof} 

\noindent
Together with Lemmas~\ref{ext.with.pseudolimit} and ~\ref{imm.alg.ext}
this yields:

\begin{theorem}\label{unique.max.imm.ext} Suppose $\ca{K}$ 
satisfies Axiom $2$. Then all its maximal immediate extensions
are isomorphic over
$\ca{K}$, and all its $\sigma$-algebraically maximal immediate 
$\sigma$-algebraic extensions are isomorphic over
$\ca{K}$.
\end{theorem}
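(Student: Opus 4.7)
My plan is to prove both parts of Theorem~\ref{unique.max.imm.ext} by a standard back-and-forth style Zorn's lemma argument, exploiting the three pseudolimit-extension lemmas (Lemmas~\ref{ext.with.pseudolimit}, \ref{imm.alg.ext}, and the unnamed one just preceding the theorem) to extend valued-difference-field embeddings along pc-sequences. I will handle both statements together, indicating the restriction needed for the second.

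First I would fix two maximal immediate extensions $\ca{K}_1$ and $\ca{K}_2$ of $\ca{K}$ (respectively, two $\sigma$-algebraically maximal immediate $\sigma$-algebraic extensions) and consider the partially ordered set of valued difference field embeddings $\phi\colon \ca{L} \to \ca{K}_2$ over $\ca{K}$, where $\ca{K} \le \ca{L} \le \ca{K}_1$ is an intermediate valued difference subfield. Zorn's lemma yields a maximal such $\phi\colon \ca{L}_1 \to \ca{L}_2 \subseteq \ca{K}_2$. Since all extensions in sight are immediate over $\ca{K}$, both $\ca{L}_1$ and $\ca{L}_2$ are immediate over $\ca{K}$, and hence both satisfy Axiom~2. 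The goal is to show $\ca{L}_1 = \ca{K}_1$ and $\ca{L}_2 = \ca{K}_2$. By symmetry (swap the roles of $\ca{K}_1$ and $\ca{K}_2$), it suffices to prove $\ca{L}_1 = \ca{K}_1$.

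Suppose for contradiction that there exists $a \in K_1 \setminus L_1$. Because $\ca{K}_1$ is an immediate extension of $\ca{L}_1$, the element $a$ cannot be algebraic over $\ca{L}_1$ in a way that gives new residue or value data, and by the standard argument (no $\gamma \in \Gamma$ realizes $v(a - L_1)$ as a maximum) there is a pc-sequence $\{a_\rho\}$ in $L_1$ with $a_\rho \leadsto a$ and no pseudolimit in $L_1$. Apply $\phi$ to get an equivalent pc-sequence $\{\phi(a_\rho)\}$ in $L_2$, which likewise has no pseudolimit in $L_2$. I now split into two cases according to whether $\{a_\rho\}$ is of $\sigma$-algebraic or $\sigma$-transcendental type over $L_1$; in the second half of the theorem, one restricts $a$ to be $\sigma$-algebraic over $\ca{L}_1$ and only the algebraic case occurs by Corollary~\ref{ctra}.

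In the $\sigma$-transcendental case, since $\ca{K}_2$ is maximal (and in particular contains a pseudolimit of every pc-sequence from $L_2$), pick $b \in K_2$ with $\phi(a_\rho) \leadsto b$; then Lemma~\ref{ext.with.pseudolimit} produces a unique extension of $\phi$ sending $a \mapsto b$, contradicting maximality of $\phi$. In the $\sigma$-algebraic case, let $G(x)$ be a minimal $\sigma$-polynomial of $\{a_\rho\}$ over $L_1$; then $G^\phi(x)$ is a minimal $\sigma$-polynomial of $\{\phi(a_\rho)\}$ over $L_2$. Here the main technical step is to invoke the unnamed lemma preceding the theorem: $\ca{K}_2$ satisfies Axiom~2 and is $\sigma$-algebraically maximal (being maximal, or by hypothesis in the second statement), so it is $\sigma$-henselian by Corollary~\ref{crucialhens}(2), and the lemma produces $b \in K_2$ with $\phi(a_\rho) \leadsto b$ and $G^\phi(b) = 0$. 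Then Lemma~\ref{imm.alg.ext} extends $\phi$ to send $a \mapsto b$, again contradicting maximality. The main obstacle is really just keeping track that the target extension satisfies the hypotheses of the unnamed lemma (Axiom~2 and $\sigma$-algebraic maximality), which is the point of setting things up on the $\ca{K}_2$ side and using Corollary~\ref{crucialhens} to convert $\sigma$-algebraic maximality into $\sigma$-henselianity.
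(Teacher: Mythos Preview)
Your approach is correct and is exactly the standard Zorn's lemma argument that the paper's one-line proof (``Together with Lemmas~\ref{ext.with.pseudolimit} and~\ref{imm.alg.ext} this yields'') is pointing to. There is, however, one step that does not work as written.

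In the $\sigma$-algebraic case you conclude that ``Lemma~\ref{imm.alg.ext} extends $\phi$ to send $a \mapsto b$''. But the universal property in Lemma~\ref{imm.alg.ext}(2) requires the target element to satisfy $G(b)=0$, and to identify $L_1\langle a\rangle\subseteq K_1$ with the extension built in Lemma~\ref{imm.alg.ext}(1) you would likewise need $G(a)=0$. Nothing guarantees this for your particular pseudolimit $a\in K_1$: you only chose $a$ so that $a_\rho\leadsto a$. The fix is to invoke the unnamed lemma on the $\ca{K}_1$ side as well. Since $\ca{K}_1$ is $\sigma$-algebraically maximal (being maximal, or by hypothesis in the second statement), satisfies Axiom~2, and extends $\ca{L}_1$, that lemma produces $a''\in K_1$ with $a_\rho\leadsto a''$ and $G(a'')=0$. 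Now Lemma~\ref{imm.alg.ext} gives a valued difference field isomorphism $L_1\langle a''\rangle\to L_2\langle b\rangle$ over $\phi$ sending $a''$ to $b$; since $a''\notin L_1$ (the sequence has no pseudolimit in $L_1$), this contradicts maximality of $\phi$. You never need to absorb the original $a$ into the domain.

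One smaller point: your ``by symmetry'' justification for reducing to $\ca{L}_1=\ca{K}_1$ is not quite right, since the Zorn setup is asymmetric in $\ca{K}_1,\ca{K}_2$. The correct reason is that once $\ca{L}_1=\ca{K}_1$, the image $\ca{L}_2\cong\ca{K}_1$ is itself maximal (resp.\ $\sigma$-algebraically maximal), while $\ca{K}_2$ is an immediate (resp.\ immediate $\sigma$-algebraic) extension of $\ca{L}_2$, forcing $\ca{L}_2=\ca{K}_2$.
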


It is worth mentioning that the above result fails if Axiom 2 is not assumed, see ~\cite{vdca}. A minor variant of these results will be needed in proving our model theoretic conclusions, its proof is straightforward. Let $|X|$ denote the cardinality of a set $X$, and let $\kappa$ be a 
cardinal.

\begin{lemma} Suppose $\ca{E}=(E, \Gamma_E, \dots)\le \ca{K}$ satisfies Axiom 1 and $\ca{K}$ is $\sigma$-henselian, and $\kappa$-saturated with
$\kappa>|\Gamma_E|$. 
Let $\{a_\rho\}$ from $E$ be a pc-sequence of $\sigma$-algebraic type over
$E$, with no pseudolimit in $E$, and with minimal
$\sigma$-polynomial $G(x)$ over $E$. Then there exists $b\in K$
such that $a_\rho \leadsto b$ and $G(b)=0$.
\end{lemma}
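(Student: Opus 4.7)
The plan is to adapt the proof of the preceding lemma, replacing the $\sigma$-algebraic maximality of $\ca{K}'$ used there (which produced a pseudolimit in $K'$ via Lemma~\ref{imm.alg.ext}) with the saturation hypothesis on $\ca{K}$, which will produce a pseudolimit directly in $K$. Once the pseudolimit is in hand the rest is essentially a rerun.

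First I would use $\kappa$-saturation to realize a pseudolimit of $\{a_\rho\}$ inside $\ca{K}$. The key observation is that since the associated sequence $\{\gamma_\rho\}$ is strictly increasing in $\Gamma_E$, the index set of $\{a_\rho\}$ (after discarding an initial segment) has cardinality at most $|\Gamma_E|$. The partial type over $E \cup \{a_\rho : \rho\}$ asserting ``$v(x - a_{\rho'}) > \gamma_\rho$ for all $\rho'>\rho$'' involves at most $|\Gamma_E|$ formulas and is finitely satisfiable in $\ca{K}$ (any sufficiently late $a_{\rho''}$ itself realizes a finite sub-conjunction). Since $\kappa > |\Gamma_E|$ and $\ca{K}$ is $\kappa$-saturated, the type is realized by some $a \in K$ with $a_\rho \leadsto a$.

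Second, since $\ca{E}$ satisfies Axiom~1, I would apply Lemma~\ref{henselconf} to $\{a_\rho\}$, $G$, and this pseudolimit $a$ to obtain a pc-sequence $\{b_\rho\}$ in $E$, equivalent to $\{a_\rho\}$, with all the stated properties. (As already exploited in the preceding lemma, the use of Lemma~\ref{henselconf} goes through even when only the base satisfies Axiom~1, since its proof rests on Theorem~\ref{crucial.result.nonwitt} and Theorem~\ref{adjustment1}, both of which require only Axiom~1.) If $G(a) = 0$ we are done upon taking $b := a$; otherwise clause~(4) of that lemma gives that $(G, a)$ is in $\sigma$-hensel configuration with $\gamma(G, a) > \gamma_\rho$ eventually.

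Finally, I would invoke the $\sigma$-henselianity of $\ca{K}$ directly on $(G, a)$: Definition~\ref{shensel2} yields $b \in K$ with $G(b) = 0$ and $v(b - a) = \gamma(G, a)$. Since $\gamma(G, a) > \gamma_\rho = v(a - a_\rho)$ eventually, the ultrametric inequality gives $v(b - a_\rho) = \gamma_\rho$ eventually, so $a_\rho \leadsto b$, as required. The only genuinely new ingredient compared with the preceding lemma is the saturation/cardinality bookkeeping in the first step, and the bound $|\{\gamma_\rho\}| \le |\Gamma_E|$ makes it essentially a one-line verification; the remaining steps are a literal transcription of the preceding proof.
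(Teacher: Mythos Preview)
Your proposal is correct and matches the paper's intended argument: the paper says this variant's proof is straightforward, and indeed it is exactly the proof of the preceding lemma with the $\sigma$-algebraic maximality hypothesis (used there only to procure a pseudolimit in $K'$) replaced by the $\kappa$-saturation of $\ca{K}$ together with the cardinality bound $|\{\gamma_\rho\}|\le |\Gamma_E|<\kappa$. The remaining steps---invoking Lemma~\ref{henselconf} (which, as you note, needs only Axiom~1 on the base) and then the $\sigma$-henselianity of $\ca{K}$---are a verbatim rerun.
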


\noindent
In combination with Lemmas~\ref{ext.with.pseudolimit} and 
~\ref{imm.alg.ext} this yields:

\begin{corollary}\label{immsat} If $\ca{E}=(E, \Gamma_E, \dots)\le \ca{K}$
satisfies Axiom $2$, and $\ca{K}$ is 
$\sigma$-henselian, and  $\kappa$-saturated with
$\kappa>|\Gamma_E|$, then any maximal immediate 
extension of $\ca{E}$ can be embedded in $\ca{K}$ over $\ca{E}$.
\end{corollary}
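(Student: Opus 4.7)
The plan is a standard Zorn's lemma step-by-step extension argument, using the two immediate-extension construction lemmas (\ref{ext.with.pseudolimit} and \ref{imm.alg.ext}) to enlarge a partial embedding and the preceding unnumbered lemma to handle the $\sigma$-algebraic case.

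First, let $\ca{F}$ be a maximal immediate extension of $\ca{E}$ and consider the poset $\ca{P}$ of pairs $(\ca{E}', \iota)$, where $\ca{E} \le \ca{E}' \le \ca{F}$ and $\iota \colon \ca{E}' \to \ca{K}$ is an embedding of valued difference fields over $\ca{E}$, ordered by extension. Unions of chains in $\ca{P}$ are again in $\ca{P}$, so Zorn yields a maximal element $(\ca{E}', \iota)$. The goal reduces to showing $\ca{E}' = \ca{F}$.

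Suppose for contradiction that there is $a \in F \setminus E'$. Since $\ca{F}$ is an immediate extension of $\ca{E}'$ and $a \notin E'$, a standard fact about immediate extensions produces a pc-sequence $\{a_\rho\}$ in $E'$ with $a_\rho \leadsto a$ and no pseudolimit in $E'$. Note that because all the extensions in play are immediate, $\Gamma_{E'} = \Gamma_E$ and $\bk_{E'} = \bk_E$; in particular $\ca{E}'$ (and hence $\iota(\ca{E}')$) still satisfies Axiom 2, and $|\Gamma_{\iota(E')}| = |\Gamma_E| < \kappa$. I now split on the type of $\{a_\rho\}$ over $E'$.

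If $\{a_\rho\}$ is of $\sigma$-transcendental type over $E'$, I replace it by an equivalent cofinal subsequence of length at most $|\Gamma_{E'}| < \kappa$; this is possible because the strictly increasing sequence of widths $\{\gamma_\rho\}$ sits inside $\Gamma_{E'}$. The partial type over $\iota(E')$ expressing ``$x$ is a pseudolimit of $\{\iota(a_\rho)\}$'' then has fewer than $\kappa$ formulas and is finitely satisfiable in $\ca{K}$, so by $\kappa$-saturation it is realized by some $b \in K$. Lemma~\ref{ext.with.pseudolimit} (applied to $\iota(\ca{E}') \le \ca{K}$ with $b$) then supplies a unique embedding $\ca{E}'\langle a \rangle \to \ca{K}$ over $\iota$ sending $a$ to $b$, properly extending $\iota$.

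If instead $\{a_\rho\}$ is of $\sigma$-algebraic type over $E'$, let $G(x)$ be a minimal $\sigma$-polynomial for it over $E'$. Applying the unnumbered lemma immediately preceding the corollary to $\iota(\ca{E}') \le \ca{K}$ and the pc-sequence $\{\iota(a_\rho)\}$ with minimal $\sigma$-polynomial $\iota(G)$, I obtain $b \in K$ with $\iota(a_\rho) \leadsto b$ and $\iota(G)(b) = 0$. Lemma~\ref{imm.alg.ext} then yields a unique embedding $\ca{E}'\langle a \rangle \to \ca{K}$ over $\iota$ sending $a \mapsto b$, again properly extending $\iota$. Either way, maximality of $(\ca{E}', \iota)$ is contradicted, so $\ca{E}' = \ca{F}$, completing the proof.

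The only non-bookkeeping point is the cardinality reduction in the transcendental case --- verifying that one may trim $\{a_\rho\}$ to cofinality $\le |\Gamma_{E'}|$ so that $\kappa$-saturation actually produces a pseudolimit in $\ca{K}$. The rest is a routine transfer of the two immediate-extension lemmas across $\iota$, using that $\sigma$-henselianity of $\ca{K}$ already gives Axiom 2 (by Lemma~\ref{hensax1}), and that immediate extensions preserve Axiom 2 since the residue difference field is unchanged.
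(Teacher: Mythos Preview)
Your overall strategy is exactly what the paper intends: a Zorn/back-and-forth argument using Lemmas~\ref{ext.with.pseudolimit} and \ref{imm.alg.ext} together with the unnumbered lemma just before the corollary. The transcendental case is fine, since any pseudolimit of a $\sigma$-transcendental pc-sequence is itself $\sigma$-transcendental (Corollary~\ref{ctra}), so $\ca{E}'\langle a\rangle$ really is the extension described in Lemma~\ref{ext.with.pseudolimit}.

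There is, however, a small gap in the $\sigma$-algebraic case. You invoke Lemma~\ref{imm.alg.ext} to embed $\ca{E}'\langle a\rangle$ into $\ca{K}$ sending $a\mapsto b$, but that lemma only describes the extension generated by a pseudolimit which is a \emph{root} of $G$; nothing guarantees that your chosen $a\in F$ satisfies $G(a)=0$, and in general it need not. What you actually need is an element $c\in F$ with $a_\rho\leadsto c$ and $G(c)=0$, so that $\ca{E}'\langle c\rangle\subseteq\ca{F}$ is the extension of Lemma~\ref{imm.alg.ext} and can then be embedded into $\ca{K}$ via $c\mapsto b$. Such a $c$ exists because $\ca{F}$, being maximal immediate and inheriting Axiom~2 from $\ca{E}$, is $\sigma$-algebraically maximal; the earlier unnumbered lemma (the one preceding Theorem~\ref{unique.max.imm.ext}) then supplies exactly this $c$. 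With that adjustment your argument goes through and matches the paper's.
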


\begin{section}{The Embedding Theorem}\label{et}

Theorem~\ref{embed11}, the main result of the paper,
will give us a criterion for elementary equivalence between $\sigma$-henselian valued difference fields
of residue characteristic zero and also a relative quantifier elimination result. We now present the notion of {\em rv-structure} for a valued field which will be needed for relative quantifier elimination. This notion was introduced in~\cite{basarabkuhl} building up on the notion of {\em additive multiplicative congruences}.

\medskip\noindent
{\bf RV-Structure.}
Let $\ca{K}$ be a valued field. The rv-sort 
for $K$ is the imaginary sort $RV=K^{\times}/1 + \mathfrak{m}$, which is a multiplicative group. The associated canonical surjection is denoted by $rv$ and we extend it to $K$ by setting $rv(0) \colon=\infty$. The  subgroup $\mathcal{O}^{\times}/1 + \mathfrak{m}$ of $RV$ is exactly $\bk^{\times}$. Two elements $a, b \in K$ are equivalent modulo $1 + \mathfrak{m}$ if and only if $v(a-b)>v(a)=v(b)$ and so the map
\begin{align*}
v_{rv} \colon& RV \to \Gamma \\
&a(1+\mathfrak{m}) \mapsto v(a)
\end{align*}
is well-defined. 
There is also a partial addition on the rv-sort. For $a,b \in K$ with $v(a+b)=\min\{v(a),v(b)\}$, define
$$rv(a)+rv(b)\colon=rv(a+b).$$
It is clear that this partial addition is well-defined and we can extend this addition to all of $RV$ by setting
$rv(a)+rv(b):=\infty$ whenever $v(a+b) \neq\min\{v(a),v(b)\}$. We denote this map by $\bigoplus$.

\medskip\noindent
If $\ca{K}$ is a valued difference field, its distinguished automorphism $\sigma$ fixes $\mathfrak{m}$ set-wise, and so the map
$\sigma_{rv} : x(1 + \mathfrak{m}) \to    rv(\sigma(x))$, is also well-defined. Moreover the induced maps  
$\sigma_{rv}:\Gamma \to \Gamma$ and $\sigma:\Gamma \to \Gamma$ are the same. We consider the rv-sort of $\ca{K}$ as first-order structure in the language $\mathcal{L}_{rv}:=\{.,^{-1},\oplus, 1, v_{rv}, \sigma_{rv}\}$ and refer to it as the {\em difference rv structure} of $\mathcal{K}$. Replacing the addition, multiplication and the difference operator on $K$ by the corresponding operations on the rv-sort, we may consider a $\sigma$-polynomial $F(x)$ over $K$ as a function on the rv-sort, also denoted $F(x)$. It is worth noting that the value group and residue field is interpretable in the rv-structure both for valued fields and valued difference fields, see Proposition 9.3 of~\cite{Pal}.

\medskip\noindent
{\bf The Main Result.}
In this subsection we consider $4$-sorted structures
$$ \ca{K}=\big(K, \Gamma, \bk, RV; v, rv, v_{rv}, \pi)$$
where $\big(K, \Gamma, \bk; v, \pi \big)$ is a valued difference
field and $RV$ is its RV sort.
Such a structure will be called an
{\em rv-valued difference field\/}. Any subfield $E$ of $K$ is
viewed as a valued subfield of $\ca{K}$ with valuation ring
$\ca{O}_E:=\ca{O}\cap E$.

%\medskip\noindent
%If $\cha(\bk)=0$ and $\ca{K}$ is
%$\sigma$-henselian, then by Theorem~\ref{lift.res.field} there is a difference
%ring morphism $i:\bk \to \ca{O}$ such that $\pi(i(a))=a$ for all $a \in K$;
%we call such $i$ a {\em $\sigma$-lifting\/} of $\bk$ to  $\ca{K}$. This will play a minor
%role in the proof of the Equivalence Theorem.

\medskip\noindent
A {\em good substructure\/} of 
$\ca{K}=(K, \Gamma, \bk, RV; v, rv, v_{rv}, \pi)$ is a triple
$\ca{E}=(E, \Gamma_{\ca{E}}, \bk_{\ca{E}}, RV_{\ca{E}})$ such that \begin{enumerate}
\item $E$ is a difference subfield of $K$, 
\item $\Gamma_{\ca{E}}$ is an ordered abelian subgroup of $\Gamma$ 
with $v(E^\times)\subseteq \Gamma_{\ca{E}}$,
\item $\bk_{\ca{E}}$ is a difference subfield of $\bk$ with 
$\pi(\ca{O}_E)\subseteq \bk_{\ca{E}}$,
\item $RV_{\ca{E}}$ is a subgroup of $RV$ with $rv(E) \subseteq RV_{\ca{E}}$.
\end{enumerate}

 For good substructures $\ca{E}_1=(E_1, \Gamma_1, \bk_1, RV_1)$ and
$\ca{E}_2=(E_2, \Gamma_2, \bk_2, RV_2)$ of $\ca{K}$, we define 
$\ca{E}_1\subseteq \ca{E}_2$ to mean that 
$E_1 \subseteq E_2,\ \Gamma_1 \subseteq \Gamma_2,\ \bk_1 \subseteq \bk_2$ and $RV_1 \subseteq RV_2$.
If $E$ is a difference subfield of $K$ with 
$\ac(E)=\pi(\ca{O}_E)$, then 
$\big(E, v(E^\times), \pi(\ca{O}_E)\big)$ is a good substructure of $\ca{K}$, 
and if in addition $F\supseteq E$ is a difference subfield
of $K$ such that $v(F^\times)=v(E^\times)$, then $\ac(F)=\pi(\ca{O}_F)$.
Throughout this subsection 
$$\ca{K}=(K, \Gamma, \bk, RV; v, rv, v_{rv}, \pi), \qquad \ca{K}'=(K', \Gamma', \bk', RV'; v', rv', v'_{rv}, \pi')$$ 
are rv-valued difference fields, with 
valuation rings $\ca{O}$ and $\ca{O}'$, and
$$\ca{E}=(E,\Gamma_{\ca{E}}, \bk_{\ca{E}}, RV_{\ca{E}}), \qquad \ca{E'}=(E',\Gamma_{\ca{E}'},
\bk_{\ca{E}'}, RV_{\ca{E}'})$$ are good substructures of
$\ca{K}$, $\ca{K'}$ respectively. To avoid too complicated notation we let $\sigma$ 
denote the difference operator of each of $K, K', E, E'$, and put 
$\ca{O}_{E'}:= \ca{O}'\cap E'$.

 A {\em good map\/} $\mathbf{f}: \ca{E} \to \ca{E'}$
is a quadruple $\mathbf{f}=(f, f_{\val}, f_{\res}, f_{rv})$ consisting
of a difference field isomorphism
$f:E \to E'$, an ordered group isomorphism $f_{\val}:\Gamma_{\ca{E}} \to
\Gamma_{\ca{E}'}$, a difference field isomorphism $f_{\res}: \bk_{\ca{E}} \to
\bk_{\ca{E}'}$ and a group isomorphism $f_{rv}: RV_{\ca{E}} \to RV_{\ca{E}'}$ such that
\begin{enumerate}
\item[(i)] $f_{\val}(v(a))=v'(f(a))$ for all $a \in E^\times$, and
$f_{\val}$ is elementary as a partial map between $\Gamma$ and $\Gamma'$; 
\item[(ii)]
$f_{\res}(\pi(a))=\pi'(f(a))$ for all $a \in E$, and
$f_{\res}$ is elementary as a partial map between $\bk$ and $\bk'$;
\item[(iii)] $f_{rv}(rv(a))=rv'(f(a))$ for all $a \in E^{\times}$ and $f_{rv}$ is elementary as a partial map between $RV_{\ca{E}}$ and $RV_{\ca{E}'}$. 
\end{enumerate}
Let $\mathbf{f}: \ca{E} \to \ca{E'}$ be a good map as above. Then the 
field part $f: E \to E'$ of $\af$ is a valued difference field isomorphism. Moreover
$f_{\val}$, $f_{\res}$ and $f_{rv}$ agree on $v(E^\times)$, $\pi(\ca{O}_E)$ and $rv(E)$ with
the corresponding maps induced by $f$. 
We say that a good map $\ag= (g, g_{\val}, g_{\res}, g_{rv}) : \ca{F} \to \ca{F'}$ 
{\em extends\/} $\af$ if $\ca{E}\subseteq \ca{F}$, $\ca{E'}\subseteq \ca{F'}$, 
and $g$, $g_{\val}$, $g_{\res}$, $g_{rv}$ extend $f$, $f_{\val}$, $f_{\res}$, $g_{rv}$ respectively.  
The {\em domain\/} of $\mathbf{f}$ is $\ca{E}$.

\noindent
We say that $\ca{E}$ satisfies Axiom 1 (respectively, Axiom 2) 
if the valued difference subfield $(E, v(E^\times),\pi(\ca{O}_E);\dots)$ of 
$\ca{K}$ does. Likewise, we say that $\ca{E}$ is $\sigma$-henselian if this valued difference subfield of $\ca{K}$ is. 

Regular elements will also play a role in the main theorem, for value group extensions. The same technique has been used in ~\cite{Pal} with a different name\footnote{{\em Generic elements} from ~\cite{Pal} correspond to regular elements.}. The next result is the analogue of Lemma 8.8 from~\cite{Pal}, whose proof can be generalized to our context easily using Lemma~\ref{regforpc}.

\begin{lemma}\label{regforvalue}
Let $\ca{K}$ be a $\kappa^{+}$-saturated valued difference field which satisfies Axiom 1. Suppose that $\ca{E}$ is a good substructure of $\ca{K}$ with $\bk_{\ca{E}} < \kappa$. Let $\gamma \in \Gamma$, $\gamma \notin v(E^{\times})$.
Then
\begin{itemize}
\item[(i)] there is $a \in K$ with $v(a)=\gamma$ which is generic over $E$;
\item[(ii)] The value group of $E\langle a \rangle$ is $v(E^\times)\langle \gamma \rangle$ and its residue field is the same as the residue field of $E$;
\item[(iii)] if $b\in K$ is generic over $E$ with $v(a)=\gamma'$ and the ordered difference groups $v(E^\times)\langle \gamma \rangle$ and $v(E^\times)\langle \gamma' \rangle$ are isomorphic via the map sending $\gamma$ to $\gamma'$ then there is a valued difference field isomorphism between $E\langle a \rangle$ and $E\langle b \rangle$ sending $a$ to $b$.
\end{itemize}
\end{lemma}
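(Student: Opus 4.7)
The plan is to prove (i) and (ii) together by realizing a carefully chosen partial type in the $\kappa^{+}$-saturated $\ca{K}$, and to obtain (iii) from an evaluation-morphism argument that uses the regularity of $a$ and $b$.

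For (i) and (ii), I would realize over $\ca{E}$ the partial type $p(x)$ asserting (a) $v(x) = \gamma$; (b) $x$ is regular for every $\sigma$-polynomial $F$ over $E$; and (c) for every $\sigma$-polynomial $F$ over $E$ and every $e \in E^{\times}$ with $v(e) = F_v(\gamma)$, the class $rv(F(x)/e)$ lies in $RV_{\ca{E}}$ (equivalently $\overline{F(x)/e} \in \bk_{\ca{E}}^{\times}$). Finite satisfiability is shown by adapting Lemma~\ref{regforpc}: given a finite subset of the conditions, pick $b \in K$ with $v(b) = \gamma$, so that each normalized polynomial $G_F(y) := F(by)/m_F$ has coefficients in $\ca{O}$ and nonzero reduction $\overline{G_F}$ over $\bk$. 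By Lemma~\ref{zariskitop}, the joint non-vanishing requirements together with the rv-membership requirements reduce to the non-vanishing at the residue of the still-to-be-chosen $c \in K$ of a single polynomial $h$ over $\bk_{\ca{E}}$; Axiom~1 applied inside $\bk_{\ca{E}}$ provides a suitable $\alpha \in \bk_{\ca{E}}$, and $a := bc$ with $\bar{c} = \alpha$ realizes the finite subtype. By $\kappa^{+}$-saturation, $p(x)$ is realized by some $a \in K$. Claim (i) is then immediate; for (ii), regularity gives $v(F(a)/G(a)) = F_v(\gamma) - G_v(\gamma) \in v(E^{\times})\langle\gamma\rangle$, establishing the value-group assertion, while condition (c) forces every valuation-zero element of $E\langle a \rangle$ to have residue in $\bk_{\ca{E}}$, establishing the residue-field assertion.

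For (iii), define $\phi \colon E\langle a \rangle \to E\langle b \rangle$ over $E$ by $F(a) \mapsto F(b)$ on $\sigma$-polynomials $F$. This is well-defined and injective on the $\sigma$-polynomial ring because regularity of $a$ gives $v(F(a)) = F_v(\gamma) < \infty$ for every nonzero $F$; the same for $b$ shows $\phi$ extends to a difference-field isomorphism of the fraction fields. To check preservation of valuations, regularity gives $v(F(a)/G(a)) = F_v(\gamma) - G_v(\gamma)$ and $v(F(b)/G(b)) = F_v(\gamma') - G_v(\gamma')$; since the hypothesized isomorphism $v(E^{\times})\langle\gamma\rangle \to v(E^{\times})\langle\gamma'\rangle$ fixes $v(E^{\times})$ and commutes with each $\bsigma^{\i}$, it sends the former to the latter, so $\phi$ is a valued difference field isomorphism.

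The hard part will be the residue-field claim in (ii): regularity alone fails to force residues into $\bk_{\ca{E}}$ when $\gamma$ has nontrivial annihilator modulo $v(E^{\times})$ in $\mathbb{Z}[\sigma]$, because in that case several multi-indices can jointly minimize $F_v(\gamma)$ and the residues of cross terms like $(a_{\i}/a_{\i_0})\bsigma(a)^{\i - \i_0}$ can escape $\bk_{\ca{E}}$. Imposing the rv-conditions (c) remedies this, but verifying their finite satisfiability requires the delicate combination of Lemma~\ref{zariskitop} (to collapse finitely many $\bk$-conditions into one $\bk_{\ca{E}}$-condition) together with Axiom~1 applied inside $\bk_{\ca{E}}$ to produce a residue $\alpha$ witnessing that single condition.
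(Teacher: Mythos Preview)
The paper does not prove this lemma---it defers to \cite{Pal}---so a direct comparison is impossible. Your arguments for (i), for the value-group part of (ii), and for (iii) are sound: saturation plus Lemma~\ref{regforpc} gives a regular $a$ of the right valuation, regularity forces $v(F(a))=F_v(\gamma)\in v(E^\times)\langle\gamma\rangle$, and the evaluation map $F(a)\mapsto F(b)$ is a well-defined valued difference field isomorphism for the reason you give.

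There is, however, a genuine gap in your treatment of the residue-field part of (ii). Your condition (c) is vacuous unless $F_v(\gamma)\in v(E^\times)$, but the dangerous residues arise precisely from quotients $F(a)/G(a)$ with $F_v(\gamma)=G_v(\gamma)\notin v(E^\times)$. Concretely, suppose $\sigma$ is the identity on $\Gamma$: then $\sigma(a)/a$ has valuation $0$, yet regularity for the family $\sigma(x)-cx$ (with $c\in \ca{O}_E^\times$) forces $\overline{\sigma(a)/a}\neq\bar c$ for every $\bar c\in\pi(\ca{O}_E)^\times$, so the residue of $\sigma(a)/a$ is \emph{never} in $\pi(\ca{O}_E)$. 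Thus regularity together with your (c) cannot pin down the residue field. A second, independent problem is your finite-satisfiability argument: the requirement $\overline{F(a)/e}\in\bk_{\ca E}$ is a \emph{membership} condition, not a nonvanishing condition, so it does not collapse via Lemma~\ref{zariskitop} to $h(\alpha)\neq 0$ for a single $h$ over $\bk_{\ca E}$; the reductions $\overline{G_F}$ have coefficients coming from the auxiliary $b\notin E$, and choosing $\alpha\in\bk_{\ca E}$ does not force $\overline{G_F}(\alpha)\in\bk_{\ca E}$. The repair (and what the reference to \cite{Pal} is really invoking) goes through the $RV$-sort: one first arranges that $RV_{\ca E}$ contains an element of $v_{rv}$-value $\gamma$ and is closed under $\sigma_{rv}$, then realizes the type asserting regularity together with $rv(a)\in RV_{\ca E}$; the residues of all $\bsigma(a)^{\i}/\bsigma(a)^{\j}$ of valuation $0$ then lie in $RV_{\ca E}\cap\bk^\times=\bk_{\ca E}^\times$, and the residue-field claim follows.
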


\begin{theorem}\label{embed11} Suppose $\cha(\bk)=0$, $\ca{K}$, $\ca{K}'$ 
satisfy Axiom $2$ and are $\sigma$-henselian. Then any good map
$\ca{E} \to \ca{E}'$ is a partial elementary map between $\ca{K}$ and 
$\ca{K}'$.
\end{theorem}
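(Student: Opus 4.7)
The plan is a standard back-and-forth with saturation. Pass to $\kappa^{+}$-saturated elementary extensions of $\ca{K}$ and $\ca{K}'$ with $\kappa > |E| + \aleph_0$; it suffices to show that the family of good maps $\ag : \ca{F} \to \ca{F}'$ with $|\ca{F}| \le \kappa$ extending $\af$ is a back-and-forth system. By symmetry, this reduces to the following extension property: given such a $\ag$ with $|\ca{F}| < \kappa$ and any $a \in K$, there is a good map extending $\ag$ whose domain contains $a$.

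I would establish this extension property through a sequence of admissible moves. \emph{(a)} Enlarging $\bk_{\ca{F}}$, $\Gamma_{\ca{F}}$, or $RV_{\ca{F}}$ to contain a prescribed element: by the elementarity clauses~(i)--(iii) in the definition of good map and $\kappa^{+}$-saturation, the required $1$-type over the image sort is realized in $\ca{K}'$, and the good substructure is updated accordingly. \emph{(b)} Enlarging the field part by a value-group element $\gamma \in \Gamma \setminus v(F^{\times})$: Lemma~\ref{regforvalue} yields a regular $a \in K$ of valuation $\gamma$ over $F$ and a matching regular $a' \in K'$, and clause~(iii) of that lemma promotes this to a unique good map extension which changes neither the residue field nor the rv-sort. \emph{(c)} Passing from $\ca{F}, \ca{F}'$ to their henselizations: by the universal property recalled in Section~\ref{prelim}, the difference field isomorphism extends uniquely and no other sort is affected.

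With these moves available, the decisive step is to accommodate an arbitrary $a \in K$, once $\ca{F}$ has been made henselian and all residue field, value group and rv-data that $a$ produces over $\ca{F}$ have been absorbed into the appropriate sorts of $\ca{F}$. Invoke Corollary~\ref{immsat} to embed a maximal immediate extension $\ca{F}^{\mathrm{max}}$ of $\ca{F}$ into $\ca{K}$ and one $\ca{F}'^{\mathrm{max}}$ of $\ca{F}'$ into $\ca{K}'$; by Theorem~\ref{unique.max.imm.ext} the map $\ag$ lifts uniquely to a valued difference field isomorphism $\ca{F}^{\mathrm{max}} \to \ca{F}'^{\mathrm{max}}$, and we may assume this lift is part of our good map. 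If $a \in F^{\mathrm{max}}$ we are done. Otherwise $a$ is the pseudo-limit of a pc-sequence $\{a_\rho\}$ from $F^{\mathrm{max}}$ with no pseudo-limit in $F^{\mathrm{max}}$, and Lemmas~\ref{ext.with.pseudolimit} and~\ref{imm.alg.ext} handle the two cases ($\sigma$-transcendental, respectively $\sigma$-algebraic with minimal $\sigma$-polynomial $G$). Each supplies a unique embedding of $F^{\mathrm{max}}\langle a\rangle$ into any valued difference field extension of $F'^{\mathrm{max}}$ in which the transported pc-sequence pseudo-converges (respectively, to a zero of the transported $G$). Using $\sigma$-henselianity of $\ca{K}'$ to produce such a zero and $\kappa^{+}$-saturation to find a matching pseudo-limit yields the required extension.

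The main obstacle is the sort-by-sort bookkeeping: before invoking Lemmas~\ref{ext.with.pseudolimit} and~\ref{imm.alg.ext}, one must ensure that the minimal $\sigma$-polynomial $G$, the coefficients entering the Taylor expansions involved, and all residue-field, value-group and rv-data determined by $\{a_\rho\}$ have already been transported coherently via $\ag$. Orchestrating the preparatory moves (a)--(c) to achieve this at each inductive stage is the technical heart of the argument and is precisely where the elementarity of $f_{\val}, f_{\res}, f_{rv}$ together with $\kappa^{+}$-saturation are used essentially.
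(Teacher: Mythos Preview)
Your proposal follows the same back-and-forth architecture as the paper and correctly invokes most of the key ingredients (Lemma~\ref{regforvalue} for value-group lifting, Corollary~\ref{immsat} and Theorem~\ref{unique.max.imm.ext} for the immediate step). However, there is a genuine gap: among your moves (a)--(c) there is no move that lifts residue-field elements from the sort $\bk_{\ca{F}}$ into the field $F$ so as to achieve $\pi(\ca{O}_F)=\bk_{\ca{F}}$. This is the paper's step~(4), carried out using Lemmas~2.5 and~2.6 of~\cite{ad} (the $\bar\sigma$-transcendental and $\bar\sigma$-algebraic cases for adjoining a lift of a residue element). Your move~(a) only enlarges the \emph{sort} $\bk_{\ca{F}}$, and move~(b) is the value-group analogue; nothing in (a)--(c) enlarges the actual residue field $\pi(\ca{O}_F)$ of the field part.

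Without this residue-lifting move two things break. First, Corollary~\ref{immsat} requires that $\ca{F}$ satisfy Axiom~2, and by definition this refers to $\pi(\ca{O}_F)$, not to $\bk_{\ca{F}}$; the paper explicitly iterates its steps~(1) and~(4) to secure this before invoking the immediate-extension machinery. Second, your sentence ``Otherwise $a$ is the pseudo-limit of a pc-sequence $\{a_\rho\}$ from $F^{\mathrm{max}}$'' is unjustified: if $a$ generates new residue-field elements over $F$ (hence over $F^{\mathrm{max}}$, which has the same residue field), then $F^{\mathrm{max}}\langle a\rangle$ is not immediate over $F^{\mathrm{max}}$ and $a$ need not be a pseudo-limit of anything from $F^{\mathrm{max}}$. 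The paper sidesteps this entirely by first iterating residue-lifting~(4) and value-lifting~(5) until $E\langle a\rangle$ is immediate over $E$, after which a single application of Corollary~\ref{immsat} and Theorem~\ref{unique.max.imm.ext} captures $a$; your separate pc-sequence tail (and the henselization move~(c)) are then unnecessary.
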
    
%If $\ca{K}$ and $\ca{K}'$ 
%satisfy Axiom $2$, are $\sigma$-henselian, and $\cha(\bk)=0$, then any good map
%$\ca{E} \to \ca{E}'$ 
%is a partial elementary map between $\ca{K}$ and $\ca{K}'$. 
%\end{theorem}
\begin{proof} The theorem holds trivially for $\Gamma=\{0\}$, so assume 
that $\Gamma\ne \{0\}$. 
Let $\mathbf{f}=(f, f_{\val}, f_{\res}, f_{rv}): \ca{E} \to \ca{E}'$
be a good map. By passing to suitable elementary extensions of 
$\ca{K}$ and $\ca{K}'$ we may assume that
$\ca{K}$ and $\ca{K}'$ are $\kappa$-saturated, where 
$\kappa$ is an uncountable cardinal such that 
$|\bk_{\ca{E}}|,\ |\Gamma_{\ca{E}}| < \kappa$.  
We say thatl a good substructure $\ca{E}_1=(E_1,\bk_1, \Gamma_1)$ of $\ca{K}$ 
{\em small\/} if $|\bk_1|,\ |\Gamma_1|<\kappa$.
We shall prove that the good maps with small domain form a back-and-forth
system between $\ca{K}$ and $\ca{K}'$, which suffices to obtain the theorem. In other words, 
we shall prove that under the 
present assumptions on $\ca{E}$, $\ca{E}'$ and $\mathbf{f}$, there is   
for each $a \in K$ a good map $\ag$ extending $\af$ such that
$\ag$ has small domain $\ca{F}=(F,\dots)$ with $a\in F$. We achieve this by appropriately iterating Corollary~\ref{immsat}, and the extension 
procedures described below which correspond to extending the extending the residue field and value group. We will use results from ~\cite{ad} to extend the residue field have been used in different contexts in ~\cite{vdca} and ~\cite{Pal}. 

\medskip\noindent
(1) {\em Given $\alpha\in \bk$, arranging that $\alpha\in \bk_{\ca{E}}$}.
By saturation and the definition of ``good map'' this can be achieved without 
changing $f$, $f_{\val}$, $f_{rv}$, $E$, $\Gamma_{\ca{E}}$, $RV_{\ca{E}}$
by extending $f_{\res}$ to a partial elementary map between $\bk$ and $\bk'$ 
with $\alpha$ in its domain. In the same way, we obtain the next two results. 
\medskip\noindent
(2) {\em Given $\gamma\in \Gamma$, arranging that $\gamma\in \Gamma_{\ca{E}}$}.

\medskip\noindent
(3) {\em Given $r\in RV$, arranging that $r \in RV_{\ca{E}}$}.

\medskip\noindent
(4) {\em Arranging $\bk_{\ca{E}}=\pi(\ca{O}_E)$}. Suppose 
$\alpha\in \bk_{\ca{E}},\ \alpha\notin \pi(\ca{O}_E)$; set 
$\alpha':= f_{\res}(\alpha)$. 

If $\alpha$ is $\bar{\sigma}$-transcendental over $\pi(\ca{O}_E)$,
we pick $a\in \ca{O}$ and $a'\in \ca{O}'$ such
that $\bar{a}=\alpha$ and $\bar{a'}=\alpha'$, and then
Lemma 2.5 from ~\cite{ad} yields a 
good map $\ag=(g, f_{\val}, f_{\res})$ with small domain
$(E \langle a \rangle, \Gamma_{\ca{E}}, \bk_{\ca{E}})$ 
such that $\ag$ extends $\af$ and $g(a)=a'$.

Next, assume that $\alpha$ is $\bar{\sigma}$-algebraic
over $\pi(\ca{O}_E)$. Let $G(x)$ be a $\sigma$-polynomial over $\ca{O}_E$
such that $\bar{G}(x)$ is a minimal
$\bar{\sigma}$-polynomial of $\alpha$ over $\pi(\ca{O}_E)$ and 
has the same complexity
as $G(x)$. Pick $a\in \ca{O}$ such
that $\bar{a}=\alpha$. Then $G$ is $\sigma$-henselian at $a$. So we have $b
\in \ca{O}$ such that $G(b)=0$ and $\bar{b}=\bar{a}=\alpha$. 
Likewise, we obtain $b'\in \ca{O}'$ such that $f(G)(b')=0$ and 
$\bar{b'}=\alpha'$, where $f(G)$ is
the difference polynomial over $E'$ that corresponds to $G$ under $f$.
By Lemma 2.6 of ~\cite{ad}, we obtain a good map extending $\af$ 
with small domain $(E \langle b \rangle,  \Gamma_{\ca{E}}, \bk_{\ca{E}})$ and 
sending $b$ to $b'$. 

\bigskip\noindent
Iterating (4) we may assume that
$\bk_{\ca{E}}=\pi(\ca{O}_E)$. We refer from now on to
$\bk_{\ca{E}}$ as the {\em residue difference field\/} of $E$. 
Since $\ca{K}$ satisfies Axiom 2, appropriately iterating (1) and (4)  we may assume in addition that $\ca{E}$ satisfies Axiom 2. 

\bigskip\noindent
(5) {\em Arranging $\Gamma_{\ca{E}}=v(E^\times)$}. 

Suppose $\gamma\in \Gamma_{\ca{E}}$, $\gamma \notin v(E^\times)$. By Lemma~\ref{regforvalue}, we can take $b \in K$
such that $v(b)=\gamma$ and $b$ is regular over $K$. By (3), we may assume $r=rv(b) \in RV_{\ca{E}}$. Let $r'=f_{rv}(r) \in \ca{K}'$ and pick $b' \in \ca{K}'$ with $rv(b')=r'$. Then $b'$ is generic over $E'$ and we can extend $\af$ by mapping $b$ to $b'$, again by Lemma~\ref{regforvalue}.

\bigskip\noindent
Iterating (5) we assume in the rest of the proof that  
$\Gamma_{\ca{E}}=v(E^\times)$. This condition is actually
preserved in the earlier extension procedures (4). We refer from now on to
$\Gamma_{\ca{E}}$ as the {\em value group\/} of $E$. 
Note also that in the extension procedure (5) the
residue difference field does not change.

\bigskip\noindent
Now let $a \in K$ be given. We want to extend $\af$ to a good map
whose domain is small and contains $a$. By our previous remarks  
$\bk_{\ca{E}}=\pi(\ca{O}_E)$, $\Gamma_{\ca{E}}=v(E^\times)$, and 
$\ca{E}$ satisfies Axiom 2.
Appropriately iterating and alternating the above 
extension procedures
we can arrange in addition that 
$E\langle a \rangle$ is an immediate
extension of $E$. Let $\ca{E} \langle a \rangle$ be the valued
difference subfield of $\ca{K}$ that has $E\langle a \rangle$ as underlying
difference field. By Corollary~\ref{immsat}, $\ca{E} \langle a \rangle$ has a maximal immediate 
valued difference field extension $\ca{E}_1\le \ca{K}$. Then
$\ca{E}_1$ is a maximal immediate extension of $\ca{E}$ as well. 
Applying Corollary~\ref{immsat} 
to $\ca{E}'$ and using 
Theorem~\ref{unique.max.imm.ext}, we can extend
$\af$ to a good map with domain $\ca{E}_1$, construed here as a good
substructure of $\ca{K}$ in the obvious way. Of course, 
$a$ is in the underlying difference field of $\ca{E}_1$.
\end{proof}

\end{section}

\begin{section}{Equivalence and Relative Quantifier Elimination}\label{cqe1}

\noindent
Here we state the model theoretic consequences of the Theorem~\ref{embed11}. All these consequences are obtained by standard model theoretic considerations which has been done in detail repeatedly in the context of valued difference fields, see~\cite{BMS}, ~\cite{ad}, ~\cite{vdca}, ~\cite{Pal}. We use the symbols $\equiv$ and $\preceq$ for the 
relations of elementary equivalence and being an elementary submodel,
in the setting of many-sorted structures.
In this section
$$\ca{K}=(K, \Gamma, \bk, RV; \dots), \qquad \ca{K}'=(K', \Gamma', \bk', RV'; \dots)$$
are rv-valued difference fields of 
residue characteristic $0$ that satisfy Axiom $2$ and are $\sigma$-henselian. We consider them as $\ca{L}$-structures where $\ca{L}$ is the 4-sorted language described in the previous section. Note that $\Gamma$ and $\bk$ are interpretable in $RV$, hence:

\begin{theorem}\label{comp00}  
$\ca{K} \equiv \ca{K}'$ if and only if $RV \equiv RV'$.
\end{theorem}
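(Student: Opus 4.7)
The forward direction is immediate: $RV$ in the language $\ca{L}_{rv}$ is a definable reduct of $\ca{K}$, as every symbol of $\ca{L}_{rv}$ is defined from $rv$, $v$, $\sigma$ and the field operations of $K$. Hence $\ca{K}\equiv\ca{K}'$ transfers to $RV\equiv RV'$.

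For the converse, my plan is to apply Theorem~\ref{embed11} to a minimal good map. Since $\cha(\bk)=0$, both $K$ and $K'$ contain the prime difference subfield $\mathbb{Q}$, on which $\sigma$ acts as the identity (the only field automorphism of $\mathbb{Q}$) and the valuation is trivial (each nonzero integer is a unit of $\ca{O}$, since the residue characteristic is zero). Set
\[
\ca{E}:=(\mathbb{Q},\{0\},\mathbb{Q},RV_{\ca{E}}),\qquad \ca{E}':=(\mathbb{Q},\{0\},\mathbb{Q},RV_{\ca{E}'}),
\]
where $RV_{\ca{E}}$ and $RV_{\ca{E}'}$ are the subgroups of $RV$ and $RV'$ generated by $rv(\mathbb{Q}^\times)$ and $rv'(\mathbb{Q}^\times)$. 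These are good substructures, and I take $\af=(f,f_{\val},f_{\res},f_{rv})$ to be the quadruple of natural identifications: $f$ and $f_{\res}$ are identity on $\mathbb{Q}$, $f_{\val}$ is identity on $\{0\}$, and $f_{rv}(rv(q))=rv'(q)$ for $q\in\mathbb{Q}^\times$.

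It remains to check that $f_{\val}$, $f_{\res}$, $f_{rv}$ are partial elementary. By Proposition~9.3 of~\cite{Pal}, $\Gamma$ and $\bk$ (with their $\sigma$-structures) are interpretable in $RV$, so the hypothesis $RV\equiv RV'$ yields $\Gamma\equiv\Gamma'$ and $\bk\equiv\bk'$. Since $0\in\Gamma$ and each rational $q\in\mathbb{Q}\subseteq\bk$ is $\emptyset$-definable in its ambient structure, elementarity of $f_{\val}$ and $f_{\res}$ reduces to $\Gamma\equiv\Gamma'$ and $\bk\equiv\bk'$. For $f_{rv}$, the key observation is that every generator $rv(n)$ of $RV_{\ca{E}}$ (for $n\in\mathbb{Z}\setminus\{0\}$) is $\emptyset$-definable in $RV$: for $n>0$, $rv(n)$ equals the iterated partial sum $rv(1)\oplus\cdots\oplus rv(1)$ of $n$ copies of $rv(1)$, which is defined because $v(1+\cdots+1)=0$; and $rv(-1)$ is the unique element $r\in RV$ of $v_{rv}$-value $0$ with $r\oplus rv(1)=\infty$. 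Hence elementarity of $f_{rv}$ follows from $RV\equiv RV'$, so $\af$ is a good map. Theorem~\ref{embed11} then makes $\af$ partial elementary between $\ca{K}$ and $\ca{K}'$; since its domain is nonempty, $\ca{K}\equiv\ca{K}'$ follows. The bulk of the work is in Theorem~\ref{embed11}; the only care required here is the $\emptyset$-definability of the generators of $RV_{\ca{E}}$.
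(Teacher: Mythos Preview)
Your proof is correct and is precisely the standard argument the paper gestures at: the paper does not spell out a proof but only records that $\Gamma$ and $\bk$ are interpretable in $RV$ and that the result is a ``standard model theoretic consequence'' of Theorem~\ref{embed11}, which is exactly what you carry out by building the good map over the prime substructure $\mathbb{Q}$. Your care in verifying $\emptyset$-definability of the generators $rv(n)$ in $RV$ (via iterated $\oplus$ and the characterization of $rv(-1)$) is the one nontrivial detail, and it is handled correctly.
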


\noindent
The Hahn difference field
$\bk((t^\Gamma))$ can be expanded to an rv-valued difference field in the natural way and, by the above result, it is elementarily equivalent to $\ca{K}$. 

\begin{theorem}\label{comp01} Let 
$\ca{E}=(E, \Gamma_E, \bk_E, RV_E;\dots)$ be a $\sigma$-henselian rv-valued difference subfield of $\ca{K}$  such that $RV_E\preceq RV$. Then $\ca{E} \preceq \ca{K}$.
\end{theorem}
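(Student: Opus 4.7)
The plan is to apply the embedding theorem, Theorem~\ref{embed11}, to the identity map on $\ca{E}$. I first need to verify that $\ca{E}$ itself can play the role of ``$\ca{K}'$'' in that theorem. Since $\ca{E}$ is a subfield of $\ca{K}$ its residue field has characteristic zero, and by hypothesis $\ca{E}$ is $\sigma$-henselian; Lemma~\ref{hensax1} then gives that $\ca{E}$ satisfies Axiom~$2$. So the hypotheses of Theorem~\ref{embed11} hold for both $\ca{K}$ and the target $\ca{K}' := \ca{E}$.

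Next, I would consider the candidate
$$\af := (\mathrm{id}_E,\ \mathrm{id}_{\Gamma_E},\ \mathrm{id}_{\bk_E},\ \mathrm{id}_{RV_E}) : \ca{E} \to \ca{E},$$
where the source is viewed as a good substructure of $\ca{K}$ and the target as a good substructure of $\ca{E}$. The algebraic compatibilities with $v$, $\pi$ and $rv$ are immediate. The substantive requirement in the definition of good map is that each component be elementary as a partial map between the relevant sort of $\ca{K}$ and that of $\ca{E}$. For the $RV$-sort this is exactly the hypothesis $RV_E \preceq RV$. Since both $\Gamma$ and $\bk$ are interpretable in $RV$ (as noted just before Theorem~\ref{comp00}), this interpretation transports $RV_E \preceq RV$ into the two statements $\Gamma_E \preceq \Gamma$ and $\bk_E \preceq \bk$, which are the remaining conditions needed for $\af$ to be a good map.

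By Theorem~\ref{embed11}, $\af$ is then a partial elementary map between $\ca{K}$ and $\ca{E}$. Unwinding what this means for the identity map: for every $\ca{L}$-formula $\varphi(\bar{x})$ and every tuple $\bar{e}$ from $\ca{E}$ (with entries possibly from any of the four sorts), $\ca{K} \models \varphi(\bar{e})$ if and only if $\ca{E} \models \varphi(\bar{e})$. This is precisely the statement $\ca{E} \preceq \ca{K}$.

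The only step requiring any real attention is the derivation of $\Gamma_E \preceq \Gamma$ and $\bk_E \preceq \bk$ from $RV_E \preceq RV$ via interpretability; this is routine, but one should spell out that every parameter and every quantifier in the image sorts can be recovered from formulas and parameters in $RV_E$. Once this is in hand the rest is just quoting Theorem~\ref{embed11}.
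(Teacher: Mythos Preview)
Your proposal is correct and is precisely the standard argument the paper alludes to: the paper gives no proof of this statement, noting only that it follows from Theorem~\ref{embed11} by routine model-theoretic considerations, and applying Theorem~\ref{embed11} to the identity map on $\ca{E}$ (viewed simultaneously as a good substructure of $\ca{K}$ and of $\ca{E}$) is exactly that routine argument. Your verification that $\ca{E}$ satisfies Axiom~$2$ via Lemma~\ref{hensax1}, and your use of the interpretability of $\Gamma$ and $\bk$ in $RV$ to transfer $RV_E \preceq RV$ to the other sorts, are the right ingredients.
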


\begin{theorem}\label{qe} Let $\ca{T}$ be the $\ca{L}$-theory of $\sigma$-henselian valued difference fields with residue charateristic 0. Then every $\ca{L}$-formula $\phi$ is equivalent (modulo $\ca{T}$) to an $\ca{L}$-formula $\psi$ in which all occurences of field variables are free.
\end{theorem}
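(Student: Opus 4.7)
The plan is to deduce Theorem~\ref{qe} from the Embedding Theorem (Theorem~\ref{embed11}) by the standard back-and-forth argument for relative quantifier elimination. The relevant general criterion is: an $\mathcal{L}$-theory $\mathcal{T}$ eliminates quantifiers relative to a specified collection of sorts (here, $\Gamma$, $\bk$, and $RV$) if and only if every isomorphism between substructures of $\mathcal{T}$-models whose restriction to those distinguished sorts is elementary (as a partial map) is itself partial elementary. In the present setting, this is precisely what it means for a good map to be partial elementary, so Theorem~\ref{embed11} supplies the criterion directly.

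To invoke this criterion I would fix two sufficiently saturated models $\ca{K}, \ca{K}' \models \mathcal{T}$ and a pair of field tuples $a \in K^n$, $a' \in K'^n$ such that $v(F(a))$, $\pi(F(a))$, $rv(F(a))$ satisfy the same respective full types in $\Gamma$, $\bk$, $RV$ as $v'(F(a'))$, $\pi'(F(a'))$, $rv'(F(a'))$, for every $\sigma$-polynomial $F$ over $\mathbb{Z}$. This hypothesis, which encodes the type of $a$ using only formulas in which the field variable occurs freely, yields a difference-field isomorphism $f:\mathbb{Q}\langle a\rangle \to \mathbb{Q}\langle a'\rangle$ sending $a$ to $a'$ (since equalities $F(a)=0$ are captured by $rv(F(a))=\infty$), together with compatible elementary partial maps on $v(E^\times)$, $\pi(\ca{O}_E)$, and $rv(E)$ where $E = \mathbb{Q}\langle a \rangle$. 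Setting $E' = \mathbb{Q}\langle a' \rangle$, this package is precisely a good map
\[
\af:\bigl(E, v(E^\times), \pi(\ca{O}_E), rv(E)\bigr) \;\longrightarrow\; \bigl(E', v'(E'^\times), \pi'(\ca{O}_{E'}), rv'(E')\bigr)
\]
between the minimal good substructures of $\ca{K}$ and $\ca{K}'$ containing $a$ and $a'$ respectively.

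By Theorem~\ref{embed11} the map $\af$ is partial elementary, so $\tp_{\ca{K}}(a)=\tp_{\ca{K}'}(a')$ in the full language $\mathcal{L}$. Hence the $\mathcal{L}$-type of a field tuple is determined by the types in the non-field sorts of the expressions $v(F(a)),\pi(F(a)),rv(F(a))$ as $F$ ranges over $\sigma$-polynomials over $\mathbb{Z}$ (in fact, since $\Gamma$ and $\bk$ are interpretable in $RV$ as noted before Theorem~\ref{comp00}, the $rv$-data alone suffices). A routine compactness argument then converts this into the syntactic conclusion that every $\mathcal{L}$-formula is equivalent modulo $\mathcal{T}$ to one in which all field variables occur freely.

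The only obstacle is bookkeeping: one must set up the back-and-forth within the four-sorted framework and make precise the translation between the ``field-variables-free'' fragment of $\mathcal{L}$ and the existence of a good map on the substructures generated by the field tuples. As the authors point out, identical routine arguments have been carried out in \cite{BMS}, \cite{ad}, \cite{vdca}, \cite{Pal}, so no new ideas are required beyond Theorem~\ref{embed11}.
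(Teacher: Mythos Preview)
Your proposal is correct and matches the paper's approach: the paper does not give a proof of Theorem~\ref{qe} at all, simply declaring it a consequence of Theorem~\ref{embed11} via standard model-theoretic considerations already carried out in \cite{BMS}, \cite{ad}, \cite{vdca}, \cite{Pal}. Your sketch is exactly that standard argument, and the bookkeeping you flag (building the good map on $\mathbb{Q}\langle a\rangle$ from the agreement of $rv$-types, then invoking Theorem~\ref{embed11} and compactness) is the routine content the paper omits.
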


\end{section}

\medskip\noindent
{\bf Cross-section and Angular Component Maps}: Let $\ca{K}=(K, \Gamma, \bk;v,\pi)$ be a valued field. A cross-section map  is a group homomorphism $c:\Gamma \to K^{\times}$ such that $v(c(\gamma))=\gamma$, for all $\gamma \in \Gamma$. An {\em angular component} map is a multiplicative group homomorphism $ac: K^{\times} \to \bk^{\times}$ which agrees with the residue class map $\pi$ on $\ca{O}\setminus \fr{m}$. We can extend an angular component map to $K$ by setting $ac(0)=0$. In the presence of a {\em cross-section} the rv-sort is interpretable in $\ca{K}=(K, \Gamma, \bk; v,\pi, c)$ by taking 
$rv:K^{\times} \to k^{\times} \times \Gamma$ with $rv(a)=(\pi(a/c(v(a))), v(a))$. Similarly, when $K$ is equipped with an angular component map the rv-sort is interpretable in $\ca{K}=(K,\Gamma,\bk;v,\pi,ac)$ via $rv(a)=(ac(a),v(a))$. Evert valued field has elementary extension which admits cross-section and an angular component map.

If $\ca{K}$ is a valued difference field we ask that cross-section and angular component maps are compatible with the distinguished automorphism $\sigma$. Therefore it seems quite unlikely that an arbitrary valued difference field admits a cross-section or an angular component map in an elementary extensions\footnote{The first author would like to thank Koushik Pal for bringing up this point, which has been neglected in ~\cite{vdca}.}. However, many natural examples of valued difference fields can be equipped with cross-section and angular component maps and so it is worthwhile stating the consequences of the above results in the presence of a crosss-section or an angular component map\footnote{It is also possible to obtain these maps with additional assumptions. For example if the value difference group is flat as a $\mathbb Z [\sigma]$-module then the field admits a cross-section in an elementary extension.}.

\begin{theorem}\label{comp01} Let $\ca{K}, \ca{K'}$ be $\sigma$-henselian valued difference fields with residue characteristic zero which are either both equipped with a cross-section or both equipped with an angular component map. Then $\ca{K}\equiv \ca{K}'$ if and only if $\bk\equiv\bk'$ and $\Gamma\equiv\Gamma'$.
\end{theorem}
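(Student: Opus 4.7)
The necessity direction is immediate: both $\bk$ (as a difference field, via $\pi$) and $\Gamma$ (as an ordered difference group, via $v$) are uniformly interpretable in any valued difference field, so $\ca{K}\equiv\ca{K}'$ automatically implies $\bk\equiv\bk'$ and $\Gamma\equiv\Gamma'$. The real content is the converse, and the plan is to reduce it to Theorem~\ref{comp00} by showing that a $\sigma$-compatible cross-section or angular component map makes $RV$ bi-interpretable with $(\bk,\Gamma)$.

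First I would reduce the cross-section case to the angular component case. If $c:\Gamma\to K^{\times}$ is a $\sigma$-compatible cross-section, then setting $ac(a):=\pi(a/c(v(a)))$ for $a\in K^{\times}$ (and $ac(0)=0$) yields a multiplicative, $\sigma$-compatible angular component map on $\ca{K}$. So assume from now on that both $\ca{K}$ and $\ca{K}'$ carry $\sigma$-compatible angular component maps $ac$ and $ac'$. Next define
\[
\Phi:\bk^{\times}\times\Gamma\;\longrightarrow\;RV\setminus\{\infty\},\qquad \Phi(\alpha,\gamma):=rv(a)\ \text{ where }\ ac(a)=\alpha,\ v(a)=\gamma.
\]
Well-definedness follows from the fact that $ac(a)=ac(b)$ together with $v(a)=v(b)$ forces $a/b\in 1+\fr{m}$ (since $ac$ and $\pi$ agree on $\ca{O}^{\times}$ and $ac$ is a homomorphism), and $\Phi$ is clearly a bijection, extended to $(\bk^{\times}\times\Gamma)\sqcup\{\infty\}\to RV$ in the evident way.

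Then I would check that every symbol of $\ca{L}_{rv}=\{\cdot,\,{}^{-1},\oplus,1,v_{rv},\sigma_{rv}\}$ is transported by $\Phi$ to an operation definable in the two-sorted structure $(\bk,\Gamma)$ using only the difference field and difference ordered group structures: multiplication is componentwise $(\alpha,\gamma)(\alpha',\gamma')=(\alpha\alpha',\gamma+\gamma')$; inversion is $(\alpha,\gamma)^{-1}=(\alpha^{-1},-\gamma)$; $v_{rv}$ is projection to $\Gamma$; $\sigma_{rv}(\alpha,\gamma)=(\bar\sigma(\alpha),\sigma(\gamma))$ thanks to the $\sigma$-compatibility of $ac$; and the partial addition $\oplus$ takes $(\alpha,\gamma)\oplus(\alpha',\gamma')$ to $(\alpha,\gamma)$ if $\gamma<\gamma'$, to $(\alpha',\gamma')$ if $\gamma'<\gamma$, to $(\alpha+\alpha',\gamma)$ if $\gamma=\gamma'$ and $\alpha+\alpha'\neq 0$, and to $\infty$ otherwise. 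This verification is the only place where a small computation is needed, but it is routine from the definitions of $\oplus$ and $ac$. The outcome is that $RV$ is interpretable in $(\bk,\Gamma)$ without parameters, so $\bk\equiv\bk'$ and $\Gamma\equiv\Gamma'$ yield $RV\equiv RV'$ as $\ca{L}_{rv}$-structures. Theorem~\ref{comp00} then gives $\ca{K}\equiv\ca{K}'$, completing the proof. The only conceptual point worth flagging is that the theorem genuinely needs the extra structure on $\ca{K}$ and $\ca{K}'$ to make the splitting $1\to\bk^{\times}\to RV\to\Gamma\to 0$ definable; without it, $RV$ carries strictly more information than the pair $(\bk,\Gamma)$.
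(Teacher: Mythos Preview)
Your proposal is correct and follows essentially the same route as the paper. The paper does not spell out a proof of this theorem, but in the paragraph immediately preceding it the authors record precisely the interpretation you use, namely $rv(a)=(\pi(a/c(v(a))),v(a))$ in the cross-section case and $rv(a)=(ac(a),v(a))$ in the angular component case, and then state the theorem as a consequence of Theorem~\ref{comp00}. Your only minor deviation is that you first manufacture an angular component from a cross-section and then treat both cases uniformly, whereas the paper keeps the two cases in parallel; this is a presentational choice, not a mathematical one.
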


\begin{theorem}\label{qe2} Let $\ca{L}_c$ and $\ca{L}_{ac}$ be the expansions of the 3-sorted language of valued difference fields with a cross-section and with an angular component map respectively.  Let $\ca{T}_c$ and $\ca{T}_{ac}$ be the theories of $\sigma$-henselian valued difference fields with residue charateristic 0 in the language $\ca{L}_c$ and $\ca{L}_{ac}$ respectively.  Then every $\ca{L}_c$-formula $\phi$ is equivalent (modulo $\ca{T}_c$) to an $\ca{L}_c$-formula $\psi$ in which all occurences of field variables are free and every $\ca{L}_{ac}$-formula $\phi$ is equivalent (modulo $\ca{T}_{ac}$) to an $\ca{L}_{ac}$-formula $\psi$ in which all occurences of field variables are free.
\end{theorem}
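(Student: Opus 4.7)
The plan is to deduce Theorem~\ref{qe2} from the relative quantifier elimination result Theorem~\ref{qe} by showing that, in the presence of a cross-section or an angular component map compatible with $\sigma$, the RV-sort together with all its $\ca{L}_{rv}$-structure is uniformly interpretable in the $3$-sorted structure $(K,\Gamma,\bk)$ \emph{without using any field quantifier}. Granting this, a formula furnished by Theorem~\ref{qe} whose field variables are free translates directly into an $\ca{L}_{ac}$- or $\ca{L}_c$-formula that still has its field variables free.

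Concretely, in the $\ca{L}_{ac}$ case set
\[
r\colon K^{\times}\to \bk^{\times}\times\Gamma,\qquad r(a):=(ac(a),v(a)),\qquad r(0):=\infty.
\]
Because $ac$ and $v$ are multiplicative and commute with $\sigma$, the map $r$ is a difference-group isomorphism of RV onto $(\bk^{\times}\times\Gamma)\cup\{\infty\}$; under this identification $v_{rv}$ becomes the second projection and $\sigma_{rv}$ the componentwise action of $(\bar\sigma,\sigma)$. The partial addition $\oplus$ is then definable on $\bk\times\Gamma$ by the tropical recipe
\[
(\alpha,\gamma)\oplus(\alpha',\gamma') \;=\;
\begin{cases}
(\alpha,\gamma) & \text{if } \gamma<\gamma',\\
(\alpha',\gamma') & \text{if } \gamma'<\gamma,\\
(\alpha+\alpha',\gamma) & \text{if } \gamma=\gamma'\text{ and }\alpha+\alpha'\neq 0,\\
\infty & \text{otherwise,}
\end{cases}
\]
which involves only the value and residue sorts. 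Thus the whole $\ca{L}_{rv}$-structure on RV is definable from $(\Gamma,\bk,ac,v)$ without field quantifiers, and conversely $ac$ is just the first coordinate of $rv$.

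Now let $\phi(\bar x,\bar\gamma,\bar\alpha)$ be an $\ca{L}_{ac}$-formula. Viewing it as an $\ca{L}$-formula $\phi^\star$ by replacing each term $ac(t)$ (with $t$ a field term) by the first coordinate of $rv(t)$, Theorem~\ref{qe} produces an equivalent $\ca{L}$-formula $\psi^\star$ whose quantifiers range only over $\Gamma$, $\bk$ and RV, with the field variables $\bar x$ still free. We unfold $\psi^\star$ back into $\ca{L}_{ac}$: each quantifier over RV becomes a pair of quantifiers, one over $\bk$ and one over $\Gamma$ (with a disjunctive clause for $\infty$); each $rv(t)$ for a field term $t$ is replaced by $(ac(t),v(t))$; each $\oplus$, $v_{rv}$, $\sigma_{rv}$ is replaced by its definable counterpart from the interpretation above. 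Since field variables were free in $\psi^\star$, they remain free in the resulting $\ca{L}_{ac}$-formula $\psi$, which is equivalent to $\phi$ modulo $\ca{T}_{ac}$.

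The cross-section case is strictly parallel, with $r(a):=(\pi(a/c(v(a))),v(a))$ and the compatibility $\sigma\circ c=c\circ\sigma$ built into $\ca{L}_c$ ensuring that $r$ respects $\sigma_{rv}$. The only point requiring care is the book-keeping of the translation: one must verify that the explicit definitions of $\oplus$, $v_{rv}$ and $\sigma_{rv}$ on $\bk\times\Gamma$ really involve no field quantifier, so that no such quantifier is silently reintroduced when passing from $\psi^\star$ back to $\psi$. This is immediate from the formulas displayed above, so the reduction to Theorem~\ref{qe} is clean and both statements of Theorem~\ref{qe2} follow.
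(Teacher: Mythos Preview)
Your proposal is correct and follows precisely the route the paper indicates: the paragraph preceding Theorem~\ref{comp01} (second instance) already records the interpretation $rv(a)=(ac(a),v(a))$ (respectively $rv(a)=(\pi(a/c(v(a))),v(a))$), and the paper treats Theorem~\ref{qe2} as a standard consequence of the embedding theorem via exactly this identification of $RV$ with $\bk^{\times}\times\Gamma$. Your explicit unfolding of $\oplus$, $v_{rv}$, $\sigma_{rv}$ and the translation of RV-quantifiers into pairs of $\bk$- and $\Gamma$-quantifiers just makes that ``standard model theoretic consideration'' explicit.
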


\section{Applications to Transseries}\label{trans}

We refer the reader to ~\cite{vdHLN} for all definitions, conventions and basic facts regarding transseries. Let $\mathbb{T}$ be a field of transseries (grid based or well based) in $x$. Monomials of $\mathbb{T}$ (with coefficient 1) form an ordered multiplicative group. Let $\Gamma$ be this group monomials, seen as an additive group and equipped with the reverse of the ordering on monomials. Then we define the valuation of an element $f(x) \in \mathbb{T}$ as the minimum (in the sense of the reversed ordering) of the support of $f(x)$. So we have 
$$v(e^x)<<v(x)<<v(log(x))<<v(1)=0<v(x^{-1})<<v(e^{-x})$$
where $\gamma<<\gamma'$ is a shorthand for $n\gamma<\gamma'$ for all $n>0$. Note that the residue field of $\mathbb T$ is isomorphic to the field of constants of $\mathbb T$ which is a real closed. Clearly $\mathbb T$ admits a cross-section map; simply send $\gamma \in \Gamma$ to the monomial with coefficient $1$ and valuation $\gamma$. 

\medskip\noindent Let $g(x)$ be an infinite positive element of $\mathbb T$. Then we obtain an automorphism of $\mathbb T$ via sending $f(x)$ to $f(g(x))$ which is obtained from $f(x)$ by uniformly substituting $x$ by $g(x)$. By proposition 5.10 of~\cite{vdHLN} such automorphisms are {\em asymptotic} which, in terms of the valuation, means that they fix the valuation ring of $\mathbb T$ setwise.  Now let $\sigma$ be the automorphism of $\mathbb T$ given by right composition with $g(x)=x+1$, and consider $\mathbb T$ as a valued difference field equipped with a cross-section $(\mathbb T, \Gamma, \bk; v, \pi, c)$ with distinguished automorphism $\sigma$. Note that $\bar{\sigma}$ is the identity, and hence the equation $$\bar{\sigma}(x)-x+1=0$$ has no solution in $\bk$. On the contrary, the action of $\sigma$ on the value group is rather complex;
$$v(\sigma(e^x))=v(e^x), \quad v(\sigma(e^{xlogx}))=v(e^{xlogx})+v(x), \quad v(\sigma(e^{x^2})=v(e^{x^2})+v(e^x)$$
and as such does not fit in any of contexts studied in ~\cite{BMS}, ~\cite{vdca} and ~\cite{Pal}.

We now introduce a coarsening of $v$, whose residue difference field will be linear difference closed. Let $$\Delta:=\{\gamma \in \Gamma: v(e^x)<<\gamma<<v(e^{-x})\}.$$
Then $\Delta$ is a convex subgroup of $\Gamma$ and moreover if $v(f(x)) \in \Delta$ then so is $v(\sigma(f(x))$. Therefore we obtain a valued difference field equipped with a cross-section $$\mathbb{T}_\Delta:=(\mathbb T, \Gamma_w, \bk_w;w,\pi_w, c_w)$$
where $w: \mathbb{T}^{\times} \to \Gamma_w=\Gamma/\Delta$ is the coarsening of $v$ by $\Delta$. Let $K_w$ be the difference subfield of $\mathbb T$ which consists of elements $f(x)$ whose support is contained in $\Delta$. Then $K_w$ is isomorphic to $\bk_w$ via the restriction of the residue class map $\pi_w$. In order to show that $K_w$ is linear difference closed we will use the differential operator $\partial$ on $\mathbb T$, as introduced in Chapter 5 of~\cite{vdHLN}, which has a functional inverse $\int$ and is compatible with composition and exponentiation. Next we list a few conclusions from Proposition 5.11 of~\cite{vdHLN} and its proof.
\begin{lemma}\label{translem}
For all $f(x) \in K_w$ we have:
\begin{itemize}
 \item[(i)] $\partial f(x) \in K_w$; 
 \item[(ii)] $f(x+1)=f(x)+\partial f(x)+ \partial^2f(x)/2!+\partial^3f(x)/3!+\cdots$;
 \item[(iii)] $v(\partial logf(x))>0$.
\end{itemize}
\end{lemma}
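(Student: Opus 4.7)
The plan is to obtain all three conclusions as direct specializations of Proposition 5.11 of \cite{vdHLN} to the subfield $K_w$ of ``moderate'' transseries (those with support inside $\Delta$). The derivation $\partial$ and the translation $\sigma:f(x)\mapsto f(x+1)$ are the two operators on $\mathbb{T}$ whose compatibility is the content of that proposition, and the asymptotic condition defining $K_w$ is exactly what makes the formal Taylor expansion valid.

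For (iii) I would argue monomial-by-monomial. Write a nonzero $f\in K_w$ in the form $f=c\,\mathfrak{m}\,(1+\epsilon)$ with $\mathfrak{m}$ the leading monomial of $f$, $c$ a nonzero real constant, and $v(\epsilon)>0$. Then $\partial\log f=\partial\log\mathfrak{m}+\partial\epsilon/(1+\epsilon)$. The second summand has positive valuation since $v(\partial\epsilon)\ge v(\epsilon)>0$ and $1/(1+\epsilon)$ is a unit of the valuation ring. For the first summand, the hypothesis $v(\mathfrak{m})\in\Delta$ says that $\mathfrak{m}$ lies strictly between $e^x$ and $e^{-x}$, so its exponential height is strictly less than that of $e^x$; then $\log\mathfrak{m}$ is a transseries of ``height one less'' than $x$, and a direct inspection of the transmonomial structure (as in the proof of the cited proposition) yields $v(\partial\log\mathfrak{m})>0$. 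Strong linearity of $\partial$ transfers the conclusion to all of $f$.

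Statement (i) then follows from (iii) by a support argument. If $\mathfrak{m}$ is any monomial in the support of $f$, then $v(\partial\mathfrak{m})=v(\mathfrak{m})+v(\partial\log\mathfrak{m})>v(\mathfrak{m})$, so the monomials appearing in $\partial f$ all have valuation strictly greater than $v(\mathfrak{m})>v(e^x)$; in particular none of them leaves $\Delta$ on the lower end. On the upper end one uses that $\partial$ does not generate transmonomials of greater exponential height than those already present, so no support monomial crosses $v(e^{-x})$ either. Strong linearity of $\partial$ then shows $\operatorname{supp}(\partial f)\subseteq\Delta$, i.e.\ $\partial f\in K_w$.

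For (ii), iterating (iii) gives $v(\partial^n f)>v(\partial^{n-1}f)>\cdots>v(f)$, and in fact the supports of the $\partial^n f/n!$ form a well-based family in $\mathbb{T}$, so the series $\sum_{n\ge 0}\partial^n f/n!$ is strongly summable and defines an element of $\mathbb{T}$. Its equality with $\sigma(f)=f(x+1)$ is precisely the statement that on $K_w$ the automorphism $\sigma$ coincides with the formal translation operator $\exp(\partial)$, which is exactly part of Proposition 5.11 of \cite{vdHLN}. The main obstacle is the summability in (ii): one must verify well-basedness of $\bigcup_n \operatorname{supp}(\partial^n f/n!)$ in $\Gamma$, and this is the only step where the specific internal structure of $\mathbb{T}$ (well-based supports, strongly linear derivations) really enters; everything else is formal once (iii) is in hand.
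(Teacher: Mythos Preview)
Your approach matches the paper's exactly: the paper gives no argument beyond declaring that (i)--(iii) are ``conclusions from Proposition~5.11 of \cite{vdHLN} and its proof,'' which is precisely the external source you unpack. Your sketch adds welcome detail, and its minor informalities (the hand-wavy upper-end support bound in (i), and invoking $v(\partial\epsilon)\ge v(\epsilon)$ before the monomial case of (iii) is fully closed) are all absorbed by that citation once one reads it.
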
 

\medskip\noindent
Hence $\sigma(f(x))=e^{\partial}f(x)$ where $e^\partial$ is seen as an operator in $K_w[[\partial]]$. Now let $h_0,\dots, h_n \in K_w$, with $h_i \neq 0$ for some $i$, and consider the linear difference equation 
$$h_0+h_1\sigma(f(x))+\cdots+h_n\sigma^n(f(x))=1$$
We represent this equation as an (infinite order) linear differential equation:
$$Lf(x)=1$$
where $L=h_0+h_1e^\partial+\cdots+h_ne^{n\partial} \in K_w[[\partial]]$ is nonzero. A formal inverse $L^{-1}$, of $L$ can be found in the ring $K_w[[\partial]][\int]$. Note that it is possible to have the constant term of $L$ equal to zero and then one would need the inverse of $\partial$ (which is $\int$) to find $L^{-1}$. See {\em flat discrete summation} in ~\cite{vdHmeta} for a specific example worked out in detail. By part (iii) of Lemma~\ref{translem}, the formal operator $L^{-1}$ indeed acts on $K_w$ and so we can find a solution to the equation $Lf(x)=1$ by choosing $f(x)=L^{-1}(1) \in K_w$. Therefore $K_w$ is linear difference closed. 

\begin{remark}\label{t1} The fact that $K_w$ is linear difference closed is actually implicit in Chapter 7 of~\cite{vdHLN}. One can generalize the Newton polygon method for finite order linear differential operators to linear differential operators like $L$ above and indeed obtain much more than what we proved. 
\end{remark}

\begin{theorem}\label{transhens} $\mathbb{T}_\Delta$ is $\sigma$-henselian.
\end{theorem}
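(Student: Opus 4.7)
My plan is to invoke Corollary~\ref{crucialhens}(2): $\sigma$-henselianity of $\mathbb{T}_\Delta$ will follow from Axiom 2 together with $\sigma$-algebraic maximality. Axiom 2 is essentially handed to us by the discussion preceding the theorem, since $\pi_w$ restricts to a difference field isomorphism $K_w \to \bk_w$, and $K_w$ has just been shown to be linear difference closed, so $\bk_w$ inherits this property.

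For $\sigma$-algebraic maximality I will establish the stronger statement that $(\mathbb{T}, w)$ is maximal as a valued field, i.e., every $w$-pc-sequence in $\mathbb{T}$ has a pseudolimit in $\mathbb{T}$. The natural approach is to exploit the decomposition of each transseries $a \in \mathbb{T}$ as a formal sum $a = \sum_{\bar\gamma \in \Gamma_w} a_{\bar\gamma}$, where $a_{\bar\gamma}$ collects the monomials of $a$ whose $v$-valuations lie in the $\Delta$-coset $\bar\gamma$. This realizes $\mathbb{T}$ as a Hahn-type structure over $K_w$ with value group $\Gamma_w$; indeed, in the well-based case every such formal sum (with well-ordered index set in $\Gamma_w$ and well-based coefficients in $K_w$) is itself a well-based transseries, so $\mathbb{T}$ is literally the Hahn field $K_w((t^{\Gamma_w}))$ for the valuation $w$, which is maximal. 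Given a $w$-pc-sequence $\{a_\rho\}$, the components $(a_\rho)_{\bar\gamma}$ stabilize eventually for each $\bar\gamma$, and assembling the stabilized components produces the desired pseudolimit $a$.

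The main obstacle is the grid-based setting, where one must verify that the assembled $a$ satisfies the grid-based support condition rather than just well-basedness. Here the key observation is that coarsening by $\Delta$ is benign: problematic $v$-pc-sequences such as $\{\sum_{k=1}^n x^{-k}/k!\}_n$, whose successive differences have $v$-valuation in $\Delta$, are already eventually constant with respect to $w$ and so are not $w$-pc-sequences at all. A careful analysis of how the grids of the $a_\rho$ propagate along the sequence — which can be controlled after passing to a cofinal subsequence so that all supports lie in a common enlarged grid — yields $a \in \mathbb{T}$. Once maximality of $(\mathbb{T}, w)$ is established, $\sigma$-algebraic maximality of $\mathbb{T}_\Delta$ follows trivially, and Corollary~\ref{crucialhens}(2) yields the theorem.
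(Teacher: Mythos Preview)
Your strategy of invoking Corollary~\ref{crucialhens}(2) via Axiom~2 plus $\sigma$-algebraic maximality is exactly what the paper does, but you apply it to the wrong object. The claim that $(\mathbb{T},w)$ is maximal as a valued field is false, in both the well-based and the grid-based case. Transseries have \emph{bounded exponential height}: each $f\in\mathbb{T}$ lies in some $\mathbb{L}_n$ in the tower $\mathbb{L}_0\subseteq\mathbb{L}_1\subseteq\cdots$ whose union is $\mathbb{T}$. Consequently $\mathbb{T}$ is strictly smaller than $K_w((t^{\Gamma_w}))$. Concretely, set $a_n:=e^{-x}+e^{-e^{x}}+\cdots+e^{-\exp_n(x)}$. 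Then $w(a_{n+1}-a_n)=w\big(e^{-\exp_{n+1}(x)}\big)$ is strictly increasing in $\Gamma_w$, so $\{a_n\}$ is a $w$-pc-sequence; but any pseudolimit would have to contain monomials of arbitrarily large exponential height in its support, so no pseudolimit exists in $\mathbb{T}$. Your decomposition $a=\sum_{\bar\gamma}a_{\bar\gamma}$ is fine, but the assertion that ``every such formal sum \dots\ is itself a well-based transseries'' fails precisely for this reason; the issue is orthogonal to the grid-based/well-based distinction you focused on.

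The paper sidesteps this by not attempting to prove maximality of $\mathbb{T}_\Delta$ at all. Instead it observes that the automorphism $f(x)\mapsto f(x+1)$ preserves exponential height and hence restricts to each $\mathbb{L}_i$, that each $\mathbb{L}_i$ is a genuine Hahn field (so maximal, even after coarsening to $w$), and that $K_w\subseteq\mathbb{L}_0$ so each $\mathbb{L}_i$ has residue difference field $\bk_w$ and satisfies Axiom~2. Corollary~\ref{crucialhens}(2) then gives $\sigma$-henselianity of each $\mathbb{L}_i$, and since $\sigma$-henselianity is a $\forall\exists$ property it passes to the directed union $\mathbb{T}$. The missing idea in your argument is exactly this chain decomposition and the preservation of $\sigma$-henselianity under unions of chains.
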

\begin{proof} Since $\sigma(f(x))=f(x+1)$, exponentiality of $f$ and $\sigma(f)$ are the same, exercise 5.13 of ~\cite{vdHLN}. Also, $\mathbb T$ is the union of Hahn fields 
$$\mathbb{L}_0 \subseteq \mathbb{L}_1 \subseteq \mathbb{L}_2 \cdots$$
where $\mathbb{L}_{i+1}$ is obtained from $\mathbb{L}_i$ by taking exponentials, see ~\cite{vdHLN} page 98. Therefore $\sigma$ is an automorphism of each $\mathbb{L}_i$ and $\mathbb{T}$ is the directed union of the valued difference subfields $\mathbb{L}_i$. Since $K_w \subseteq \mathbb{L}_0$, the residue difference field of $\mathbb{L}_i$ is $\bk_w$ and by the above discussion $\mathbb{L}_i$ satisfies Axiom 2 for all $i$. Now, since each $\mathbb{L}_i$ is maximal, we can use Corrollary~\ref{crucialhens} to conclude that they are $\sigma$-henselian. Note that $\sigma$-henselianity is a universal-existential first-order property and such properties are preserved in unions of chains. Therefore $\mathbb{T}$ is $\sigma$-henselian. 
\end{proof}

Thus the results of the previous section are applicable, in particular:

\begin{corollary} $\mathbb{T}_\Delta$ is elementarily equivalent to the Hahn difference field $\bk_w((t^{\Gamma_w}))$ (equipped with its natural cross-section).
\end{corollary}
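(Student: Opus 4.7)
The plan is to apply Theorem~\ref{comp01} in its cross-section formulation, which states that two $\sigma$-henselian valued difference fields of residue characteristic zero, each equipped with a cross-section, are elementarily equivalent if and only if their residue difference fields and value difference groups are elementarily equivalent. Since $\mathbb{T}_\Delta$ and the Hahn difference field $\bk_w((t^{\Gamma_w}))$ share, by construction, the same residue difference field $\bk_w$ and the same value difference group $\Gamma_w$, the second half of the criterion is tautological, and everything reduces to verifying the structural hypotheses on both sides.

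For $\mathbb{T}_\Delta$, residue characteristic zero is immediate since $\bk_w$ has characteristic zero; the cross-section $c_w$ is part of the given data; and $\sigma$-henselianity is precisely Theorem~\ref{transhens}. For the Hahn difference field $\bk_w((t^{\Gamma_w}))$, residue characteristic zero is inherited from $\bk_w$, and the natural cross-section $\gamma \mapsto t^\gamma$ is compatible with the automorphism by the very definition $\sum a_\gamma t^\gamma \mapsto \sum \bar{\sigma}(a_\gamma)\, t^{\sigma(\gamma)}$ given in Section~\ref{prelim}. For $\sigma$-henselianity, I would invoke the classical result that Hahn fields are maximal as valued fields, so $\bk_w((t^{\Gamma_w}))$ admits no proper immediate extension, and in particular no proper immediate $\sigma$-algebraic extension; combined with Axiom~2, which holds since $\bk_w$ was shown to be linear difference closed just before Theorem~\ref{transhens}, Corollary~\ref{crucialhens}(2) yields $\sigma$-henselianity.

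The only delicate point is bookkeeping: the Hahn field construction has to be carried out using the inherited automorphism $\bar{\sigma}$ on $\bk_w$ and the inherited order-preserving automorphism $\sigma$ on $\Gamma_w$, for only with those choices do the two structures genuinely share the same residue difference field and the same value difference group. No real obstacle arises, and once the hypotheses are in place Theorem~\ref{comp01} delivers the desired elementary equivalence between $\mathbb{T}_\Delta$ and $\bk_w((t^{\Gamma_w}))$.
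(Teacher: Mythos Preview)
Your proposal is correct and is precisely the argument the paper has in mind: the corollary is stated without proof, as an immediate consequence of Theorem~\ref{transhens} and the results of Section~\ref{cqe1}, and your write-up simply spells out those details---verifying $\sigma$-henselianity of the Hahn side via maximality and Corollary~\ref{crucialhens}(2), and then invoking the cross-section version of the equivalence criterion. There is nothing to add.
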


In this section we considered the particular automorphism $\sigma(f(x))=f(x+1)$ but indeed one can carry out the arguments above for any $\sigma$ which is given by right-composition with $g(x)$ where $g(x)$ is an infinite positive transseries of exponential and logarithmic depth zero. For that general case one still considers the coarsening above, the only difference would be to explicitly follow Remark~\ref{t1} instead of the discussion which precedes it.

\bibliography{bibt}

\end{document}